\newtheorem{coro}{Corollary}[section]
\newtheorem{thm}[coro]{Theorem}
\newtheorem{lem}[coro]{Lemma}
\newtheorem{prop}[coro]{Proposition}
\newtheorem*{mainthm}{Main Theorem}
\theoremstyle{definition}
\newtheorem{rem}[coro]{Remark}
\newtheorem{defn}[coro]{Definition}
\newtheorem{notn}[coro]{Notation}
\newtheorem{obs}[coro]{Observation}
\newcommand{\N}{\mathbb{N}}
\newcommand{\C}{\mathscr{C}}
\newcommand{\Z}{\mathbb{Z}}
\newcommand{\Rd}{{\mathcal{R}}}
\newcommand{\Pa}{{\mathcal{P}}}
\newcommand{\dw}{{\mathsf{d}_{W}}}
\newcommand{\ow}{\overline{w}}
\newcommand{\app}{\mathscr{A}_0}
\newcommand{\inv}[1]{#1^{-1}}
\newcommand{\adj}[1]{\overset{#1}{\sim}} 
\newcommand{\dash}{\nobreakdash-\hspace{0pt}}
\newcommand{\loccit}{\emph{loc.\@~cit.\@}\xspace}
\DeclareMathOperator{\Aut}{Aut}
\DeclareMathOperator{\ch}{Ch}
\DeclareMathOperator{\dist}{dist}
\DeclareMathOperator{\Fix}{Fix}
\DeclareMathOperator{\Flex}{Flex}
\DeclareMathOperator{\Pc}{Pc}
\DeclareMathOperator{\proj}{proj}
\DeclareMathOperator{\Rep}{Rep}
\DeclareMathOperator{\Sym}{Sym}
\DeclareMathOperator{\Stab}{Stab}
\DeclareMathOperator{\type}{type}
\DeclareMathOperator{\B}{\mathsf{B}}
\DeclareMathOperator{\Sp}{\mathsf{S}}
\title{Open subgroups of the automorphism group of a right-angled building}
\author{Tom De Medts \and Ana C. Silva}
\date{\today}
\begin{document}
\maketitle

\begin{abstract}
    We study the group of type-preserving automorphisms of a right-angled building, in particular when the building is locally finite.
    Our aim is to characterize the proper open subgroups as the
    finite index closed subgroups of the stabilizers of proper residues.

    One of the main tools is the new notion of firm elements in a right-angled Coxeter group, which are those elements for which the final letter
    in each reduced representation is the same.
    We also introduce the related notions of firmness for arbitrary elements of such a Coxeter group
    and $n$-flexibility of chambers in a right-angled building.
    These notions and their properties are used to determine the set of chambers fixed by the fixator of a ball.
    Our main result is obtained by combining these facts with ideas by Pierre-Emmanuel Caprace and Timoth\'ee Marquis
    in the context of Kac--Moody groups over finite fields,
    where we had to replace the notion of root groups by a new notion of root wing groups.
\end{abstract}

\section{Introduction}

A Coxeter group is right-angled if the entries of its Coxeter matrix are all equal to $1$, $2$ or $\infty$
(see Definition~\ref{def:cox} below for more details).
A right-angled building is a building for which the underlying Coxeter group is right-angled.
The most prominent examples of right-angled buildings are trees.
To some extent, the combinatorics of right-angled Coxeter groups and right-angled buildings
behave like the combinatorics of trees, but in a more complicated and therefore in many aspects more interesting fashion.

Right-angled buildings have received attention from very different perspectives.
One of the earlier motivations for their study was the connection with lattices,
starting with the important contributions of Marc Burger and Shahar Mozes on lattices in products of trees \cite{BM-lattices};
see, for instance, \cite{RR06, ThomasRAB, TW11, KT12, CT13}.
On the other hand, the automorphism groups of locally finite right-angled buildings are totally disconnected locally compact (t.d.l.c.) groups,
and their full automorphism group was shown to be an abstractly simple group by Pierre-Emmanuel Caprace in~\cite{Caprace},
making these groups valuable in the study of t.d.l.c.\@ groups.
Caprace's work also highlighted important combinatorial aspects of right-angled buildings; in particular, his study of parallel residues
and his notion of wings (see Definition~\ref{def:wings} below) are fundamental tools.
From this point of view, we have, in a joint work with Koen Struyve, introduced and investigated universal groups for right-angled buildings; see~\cite{DMSS}.
Another interesting aspect is their connection with spaces of non-positive curvature, beginning with the work of Marc Bourdon \cite{Bourdon}
and including the profound result of Mike Davis that buildings (with the appropriate metric realization) are always CAT(0) \cite{DavisCat0}.
More recently, Andreas Baudisch, Amador Martin-Pizarro and Martin Ziegler have studied right-angled buildings from a model-theoretic point of view;
see~\cite{model-RAB}.

\medskip

In this paper, we continue the study of right-angled buildings in a combinatorial and topological fashion.
In particular, we introduce some new tools in right-angled Coxeter groups and we study the (full) automorphism group of right-angled buildings.
Our main goal is to characterize the proper open subgroups of the automorphism group of a locally finite semi-regular right-angled building
as the closed finite index subgroups of the stabilizer of a proper residue. We prove our result in Theorem~\ref{th:openpropermain} below.

\begin{mainthm}
Let $\Delta$ be a thick irreducible semi-regular locally finite right-angled building of rank at least $2$.
Then any proper open subgroup of $\Aut(\Delta)$ is contained with finite index in the stabilizer in $\Aut(\Delta)$ of a proper residue.
\end{mainthm}

The first tool we introduce is the notion of \emph{firm elements} in a right-angled Coxeter group:
these are the elements with the property that every possible reduced representation of that element ends with the same letter
(see Definition~\ref{def:formF} below), i.e., the last letter cannot be moved away by elementary operations.
If an element of the Coxeter group is not firm, then we define its \emph{firmness} as the maximal length of a firm prefix.

This notion will be used to define the concepts of \emph{firm chambers} in a right-angled building
and of \emph{$n$-flexibility} of chambers with respect to another chamber; this then leads to the
notion of the \emph{$n$-flex} of a given chamber. See Definition~\ref{def:Ai} below.

A second new tool is the concept of a \emph{root wing group}, which we define in Definition~\ref{def:rootwinggroup}.
Strictly speaking, this is not a new definition since the root wing groups are defined as wing fixators, and as such they already appear in the work
of Caprace \cite{Caprace}.
However, we associate such a group to each \emph{root} (i.e., a half-apartment) of the building, and we explore the fact that they behave very much like root subgroups
in groups of a more algebraic nature, such as automorphism groups of Moufang spherical buildings or Kac--Moody groups.

\paragraph*{Outline of the paper.}

In Section~\ref{se:RAC}, we provide the necessary tools for right-angled Coxeter groups.
In Section~\ref{ss:poset}, we recall the notion of a poset $\prec_w$ that we can associate to any word $w$ in the generators, introduced in \cite{DMSS}.
Section~\ref{ss:firm} introduces the concepts of firm elements and the firmness of elements in a right-angled Coxeter group.
Our main result in that section is the fact that long elements cannot have a very low firmness; see Theorem~\ref{th:boundforN}.

Section~\ref{se:RAB} collects combinatorial facts about right-angled buildings.
After recalling the important notions of parallel residues and wings, due to Caprace \cite{Caprace}, in Section \ref{ss:RAB-pre},
we proceed in Section~\ref{ss:squares} to introduce the notion of chambers that are $n$-flexible with respect to another chamber
and the notion of the square closure of a set of chambers (which is based on results from~\cite{DMSS});
see Definitions~\ref{def:Ai} and~\ref{def:squareclosure}.
Our main result in Section~\ref{se:RAB} is Theorem~\ref{th:flex}, showing that the square closure of a ball of radius $n$ around a chamber $c_0$
is precisely the set of chambers that are $n$-flexible with respect to $c_0$.

In Section~\ref{se:AutRAB}, we study the automorphism group of a semi-regular right-angled building.
We begin with a short Section~\ref{ss:boundedsection} that uses the results of the previous sections to show
that the set of chambers fixed by a ball fixator is bounded; see Theorem~\ref{th:fixedpointset}.
In Section~\ref{ss:rootwing}, we associate a root wing group $U_\alpha$ to each root (Definition~\ref{def:rootwinggroup}),
we show that $U_\alpha$ acts transitively on the set of apartments through $\alpha$ (Proposition~\ref{pr:trans-apt})
and we adopt some facts from~\cite{CM13} to the setting of root wing groups.

We then continue towards our characterization of the open subgroups of the full automorphism group of a semi-regular locally finite right-angled building.
Our final result is Theorem~\ref{th:openpropermain} showing that every proper open subgroup is a finite index subgroup of the stabilizer of a proper residue.
We follow, to a very large extent, the strategy taken by Pierre-Emmanuel Caprace and Timoth\'ee Marquis in~\cite{CM13}
in their study of open subgroups of Kac--Moody groups over finite fields.
In particular, we show that an open subgroup of $\Aut(\Delta)$ contains sufficiently many root wing groups, and much of the subtleties of the proof
go into determining precisely the types of the root groups contained in the open subgroup;
this will, in turn, pin down the residue, the stabilizer of which contains the given open subgroup as a finite index subgroup.

In the final Section~\ref{se:appl}, we mention two applications of our main theorem.
The first is a rather immediate corollary, namely the fact that the automorphism group of a semi-regular locally finite right-angled building
is a Noetherian group; see Proposition~\ref{pr:noeth}.
The second application shows that every open subgroup of the automorphism group is the reduced envelope of a cyclic subgroup;
see Proposition~\ref{pr:env}.

\paragraph*{Acknowledgments.}

This paper would never have existed without the help of Pierre-Emmanuel Caprace.
Not only did he suggest the study of open subgroups of the automorphism group of right-angled buildings to us;
we also benefited a lot from discussions with him.

Two anonymous referees did a great job in pointing out inaccuracies and suggesting improvements for the exposition.

We also thank the Research Foundation in Flanders (F.W.O.-Vlaanderen) for their support through the project
``Automorphism groups of locally finite trees'' (G011012N).

\section{Right-angled Coxeter groups}\label{se:RAC}

We begin by recalling some basic definitions and facts about Coxeter groups.
\begin{defn}\label{def:cox}
\begin{enumerate}[(i)]
    \item
    A \emph{Coxeter group} is a group $W$ with generating set $S = \{ s_1,\dots,s_n \}$ and with presentation
    \[ W = \langle s \in S \mid (st)^{m_{st}} = 1 \rangle \]
    where $m_{ss} = 1$ for all $s \in S$ and $m_{st} = m_{ts} \geq 2$ for all $i \neq j$.
    It is allowed that $m_{st} = \infty$, in which case the relation involving $st$ is omitted.
    The pair $(W,S)$ is called a \emph{Coxeter system} of \emph{rank $n$}.
    The matrix $M = (m_{s_is_j})$ is called the \emph{Coxeter matrix} of $(W,S)$.
    The Coxeter matrix is often conveniently encoded by its \emph{Coxeter diagram}, which is a labeled graph with vertex set $S$
    where two vertices are joined by an edge labeled $m_{st}$ if and only if $m_{st} \geq 3$.
    \item
    A Coxeter system $(W,S)$ is called \emph{right-angled} if all entries of the Coxeter matrix are $1$, $2$ or $\infty$.
    In this case, we call the Coxeter diagram $\Sigma$ of $W$ a \emph{right-angled Coxeter diagram};
    all its edges have label $\infty$.
\end{enumerate}
\end{defn}

\begin{defn}\label{parb}
Let $(W,S)$ be a Coxeter system and let $J \subseteq S$.
\begin{enumerate}[(i)]
    \item
        We define $W_J := \langle s \mid s \in J \rangle \leq W$ and we call this a \emph{standard parabolic subgroup} of $W$.
        It is itself a Coxeter group, with Coxeter system $(W_J, J)$.
        Any conjugate of a standard parabolic subgroup $W_J$ is called a \emph{parabolic subgroup} of $W$.
    \item\label{it:parbspherical}
        The subset $J \subseteq S$ is called a \emph{spherical subset} if $W_J$ is finite.
        When $(W,S)$ is right-angled, $J$ is spherical if and only if $|st| \leq 2$ for all $s, t \in J$.
    \item\label{it:essential}
        The subset $J \subseteq S$ is called \emph{essential} if each irreducible component of $J$ is non-spherical.
        In general, the union $J_0$ of all irreducible non-spherical components of $J$ is called the \emph{essential component} of $J$.

        If $P$ is a parabolic subgroup of $W$ conjugate to some $W_J$, then the \emph{essential component} $P_0$ of $P$ is the corresponding conjugate of $W_{J_0}$,
        where $J_0$ is the essential component of $J$.
        Observe that $P_0$ has finite index in $P$.
    \item\label{it:parclo}
        Let $E \subseteq W$.
        We define the \emph{parabolic closure} of $E$, denoted by $\Pc(E)$, as the smallest parabolic subgroup of $W$ containing $E$.
\end{enumerate}
\end{defn}

\begin{lem}[{\cite[Lemma 2.4]{CM13}}]\label{lem2.4}
Let $H_1 \leq H_2$ be subgroups of $W$.
If $H_1$ has finite index in $H_2$, then $\Pc(H_1)$ has finite index in $\Pc(H_2)$.
\end{lem}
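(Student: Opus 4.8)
The plan is to reduce the statement to the following two facts about parabolic subgroups of a Coxeter group: first, that any descending (or ascending) chain of parabolic subgroups stabilizes, so that $\Pc$ is well defined and well behaved; and second, the key algebraic input that if $H$ is a subgroup of $W$ with parabolic closure $P = \Pc(H)$, then $H$ acts on the building (or Davis complex) of $W$ in such a way that $P$ is the stabilizer of the combinatorial convex hull of a minimal $H$-invariant set, and that $[P : H \cap P']$-type indices are controlled when $H$ shrinks by a finite index. Concretely, I would argue as follows.

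First I would fix $P_2 := \Pc(H_2)$ and $P_1 := \Pc(H_1)$, and note the trivial inclusion $P_1 \leq P_2$ since $H_1 \leq H_2 \leq P_2$ and $P_1$ is the \emph{smallest} parabolic containing $H_1$. So the content is the upper bound on $[P_2 : P_1]$. The natural strategy is to pass to a transversal: write $H_2 = \bigsqcup_{i=1}^{k} H_1 g_i$ with $k = [H_2 : H_1] < \infty$ and the $g_i \in H_2 \leq P_2$. Then $H_2 \subseteq \bigcup_i P_1 g_i$, hence $P_2 = \Pc(H_2) \subseteq \Pc\bigl(\bigcup_i P_1 g_i\bigr)$. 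The crux is therefore to show that the parabolic closure of a finite union of cosets $P_1 g_i$ of a parabolic subgroup $P_1$ is again a finite-index overgroup of $P_1$.

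For that crux, I would use the geometry of the Davis complex $X$ of $(W,S)$, which is CAT(0): a parabolic subgroup $P_1$ is precisely the stabilizer of (equivalently, acts cocompactly with a "wall-residue" as) a closed convex subcomplex, namely its "support" or the minimal residue it fixes pointwise is empty and the minimal convex set it stabilizes is a convex subcomplex $Y_1$ on which $P_1$ acts with finitely many orbits of cells of each type. For each $g_i$, the translate $Y_1 g_i$ is a convex subcomplex with $\Stab(Y_1 g_i) = g_i^{-1} P_1 g_i$. The parabolic closure of $\bigcup_i P_1 g_i$ stabilizes the convex hull of $\bigcup_i Y_1 g_i$; since there are finitely many $g_i$ and each lies in $P_2$, this convex hull is contained in a bounded neighborhood of $Y_1$ inside the $P_2$-orbit structure, and one deduces that $\Pc(\bigcup_i P_1 g_i)$ stabilizes a convex subcomplex differing from $Y_1$ by only finitely many cell-orbits; hence it contains $P_1$ with finite index. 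Combining, $[P_2 : P_1] \leq [\Pc(\bigcup_i P_1 g_i) : P_1] < \infty$.

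The main obstacle I anticipate is making the phrase ``the convex hull of finitely many translates of $Y_1$ is only a bounded enlargement of $Y_1$'' into a genuine finite-index statement rather than a coarse one — a priori convex hull could add infinitely many cells. The clean way around this is purely combinatorial and avoids metric geometry altogether: use the standard description of parabolic subgroups via the set $\type(P)$ of walls (reflections) whose fixed half-space contains the relevant residue, together with the fact (due to Qi, or to Caprace–Haglund) that $\Pc(E)$ corresponds to the intersection of all standard parabolic subgroups containing a conjugate of $E$, and that $W_J \cap g W_K g^{-1}$ is again a parabolic subgroup; then a chain-condition / Noetherian argument on the lattice of parabolic subgroups (every strictly ascending chain is finite, essentially because $S$ is finite and one can induct on rank) shows the join of finitely many finite-index-related parabolics is again finite-index-related. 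I would present the proof in this combinatorial form, citing the chain condition on parabolic subgroups as the one nontrivial ingredient, since it keeps the argument self-contained and sidesteps the hull-boundedness issue entirely.
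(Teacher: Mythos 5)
The paper itself gives no proof of this lemma --- it is quoted verbatim from \cite[Lemma 2.4]{CM13} --- so your proposal has to stand on its own, and as written it has a genuine gap at its central step. After choosing a transversal $g_1,\dots,g_k$ of $H_1$ in $H_2$ you reduce everything to the claim that $Q:=\Pc\bigl(P_1g_1\cup\dots\cup P_1g_k\bigr)$ contains $P_1=\Pc(H_1)$ with finite index, stated purely in terms of a parabolic subgroup $P_1$ and finitely many elements $g_i\in W$. That claim is false: take the right-angled Coxeter system with $S=\{a,c,d\}$, $m_{ac}=m_{ad}=2$, $m_{cd}=\infty$, let $P_1=W_{\{a\}}$ (order $2$), $g_1=1$, $g_2=cd$. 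Then $Q\supseteq\Pc(cd)=W_{\{c,d\}}$ is infinite, so $[Q:P_1]=\infty$. The reason this configuration cannot arise from an honest finite-index pair $H_1\leq H_2$ is exactly the information your reduction discards: with $[H_2:H_1]=2$ one would have $g_2^2=(cd)^2\in H_1$, forcing $W_{\{c,d\}}=\Pc((cd)^2)\leq\Pc(H_1)=P_1$. So any correct argument must re-inject the finite-index hypothesis \emph{after} the coset decomposition; a statement about ``a parabolic plus finitely many cosets'' alone cannot suffice.

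Neither of your two suggested routes to the crux repairs this. The CAT(0) convex-hull argument, as you yourself note, only gives coarse boundedness, not a finite-index statement; and the combinatorial fallback (``the join of finitely many finite-index-related parabolics is finite-index-related'', via the chain condition) presupposes that each $\Pc(P_1g_i)$ is commensurable with $P_1$, which is precisely what fails in the example above ($\Pc(P_1g_2)\supseteq\Pc(cd)$ is infinite). The chain condition on parabolic subgroups is a true and useful fact, but it is not the missing ingredient. Proofs of this lemma in the literature keep the hypothesis in play throughout: for instance, one replaces $H_1$ by its normal core $N$ in $H_2$ (so $\Pc(N)\leq\Pc(H_1)$ and $H_2$ normalizes $\Pc(N)$, since conjugation by $h\in H_2$ carries $\Pc(N)$ to $\Pc(hNh^{-1})=\Pc(N)$), or one uses that every $h\in H_2$ has a positive power in $H_1$ together with a comparison of $\Pc(h)$ and $\Pc(h^n)$; only then do normalizer structure and chain/rank arguments finish the job. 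As it stands, your plan reduces the lemma to a false statement, so the key idea is still missing.
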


\subsection{A poset of reduced words}\label{ss:poset}

Let $\Sigma = (W, S)$ be a right-angled Coxeter system
and let $M_S$ be the free monoid over $S$, the elements of which we refer to as \emph{words}.
Notice that there is an obvious map $M_S \to W$ denoted by $w \mapsto \ow$;
if $w \in M_S$ is a word, then its image $\ow$ under this map is called the element \emph{represented by $w$},
and the word $w$ is called a \emph{representation of $\ow$}.
For $w_1, w_2 \in M_S$, we write $w_1 \sim w_2$ when $\overline{w_1} = \overline{w_2}$.
By some slight abuse of notation, we also say that $w_2$ is a representation of $w_1$ (rather than a representation of $\overline{w_1}$).

\begin{defn}\label{def:operationsRAB}
A $\Sigma$-\emph{elementary operation} on a word $w \in M_S$ is an operation of one of the following two types:
\begin{compactenum}[(1)]
	\item
	Delete a subword of the form $ss$, with $s\in S$.
	\item
	Replace a subword $st$ by $ts$ if $m_{st}=2$.
\end{compactenum}
A word $w \in M_S$ is called \emph{reduced} (with respect to $\Sigma$) if it cannot be shortened by a sequence of $\Sigma$-elementary operations.
\end{defn}
Clearly, applying elementary operations on a word $w$ does not alter its value in $W$.
Conversely, if $w_1 \sim w_2$ for two words $w_1, w_2 \in M_S$, then $w_1$ can be transformed into $w_2$ by a sequence of $\Sigma$-elementary operations.
The number of letters in a reduced representation of $\ow \in W$ is called the \emph{length} of $\ow$ and is denoted by $l(\ow)$.
Tits proved in~\cite{Titswords} (for arbitrary Coxeter systems) that two \emph{reduced} words represent the same element of $W$ if and only if
one can be obtained from the other by a sequence of elementary operations of type (2) (or rather its generalization to all values for $m_{st}$).

\begin{defn}
Let $w = s_1 s_2 \dotsm s_\ell \in M_S$.
If $\sigma \in \Sym(\ell)$, then we let $\sigma.w$ be the word obtained by permuting the letters in $w$ according to the permutation $\sigma$, i.e.,
\[ \sigma.w := s_{\sigma(1)} s_{\sigma(2)} \dotsm s_{\sigma(\ell)} . \]
In particular, if $w'$ is obtained from $w$ by applying an elementary operation of type (2) replacing $s_i s_{i+1}$ by $s_{i+1} s_i$, then
$\sigma.w = w'$ for $\sigma=(i \ i+1)\in \Sym(\ell)$.
In this case, $s_i$ and $s_{i+1}$ commute and we call $\sigma = (i \ i+1)$ a \emph{$w$\dash elementary transposition}.
\end{defn}

In this way, we can associate an elementary transposition to each $\Sigma$\dash elementary operation of type (2).
It follows that two reduced words $w$ and $w'$ represent the same element of $W$ if and only if
\begin{multline*}
    w' = (\sigma_k \dotsm \sigma_1).w, \text{ where each } \sigma_i \text{ is a } \\
        (\sigma_{i-1}\dotsm \sigma_1).w \text{-elementary transposition},
\end{multline*}
i.e.\@, if $w'$ is obtained from $w$ by a sequence of elementary transpositions.

\begin{defn}\label{def:rep}
If $w \in M_S$ is a reduced word of length $\ell$, then we define
\begin{multline*}
    \Rep(w) := \{ \sigma\in \Sym(\ell) \mid \sigma =\sigma_k\dotsm \sigma_1, \text{ where each } \sigma_i \text{ is a } \\
        (\sigma_{i-1}\dotsm\sigma_1).w\text{-elementary transposition}\}.
\end{multline*}
In other words, the set $\Rep(w)$ consists of the permutations of $\ell$ letters which give rise to reduced representations of $w$.
\end{defn}


We now define a partial order $\prec_w$ on the letters of a reduced word $w$ in $M_S$ with respect to $\Sigma$.
\begin{defn}[{\cite[Definition 2.6]{DMSS}}]\label{def:poset}
Let $w=s_1\dotsm s_\ell$ be a reduced word of length $\ell$ in $M_S$
and let $I_w = \{1, \dots , \ell \}$.
We define a partial order ``$\prec_w$'' on $I_w$ as follows:
\[
i \succ_w j \iff \sigma (i) < \sigma (j) \text{ for all } \sigma\in \Rep(w).
\]
\end{defn}
Note that $i \succ_w j$ implies that $i < j$.
As a mnemonic, one can regard $i \succ_w j$ as ``$i\rightarrow j$'', i.e., the generator $s_j$ comes always after the generator $s_i$ regardless of the reduced representation of $w$.

We point out a couple of basic but enlightening consequences of the definition of this partial order.
\begin{obs}\label{obsrep2}
Let $w= s_1 \dotsm s_i \dotsm s_j \dotsm s_\ell$ be a reduced word in $M_S$ with respect to a right-angled Coxeter diagram $\Sigma$.
\begin{enumerate}[(i)]
    \item\label{obsrep:3}
        If $|s_is_j|=\infty$, then $i \succ_w j$.

        The converse is not true.
        Indeed, suppose there is $i<k<j$ such that $|s_is_k|=\infty$ and $|s_ks_j|=\infty$. Then $i\succ_w j$, independently of whether $|s_is_j|=2$ or $\infty$.
    \item\label{obsrep2:4}
        If $i\not\succ_w j$, then by~\eqref{obsrep:3}, it follows that $|s_is_j|=2$
        and, moreover, for each $k\in \{i+1, \dots, j-1\}$, either $|s_is_k|=2$ or $|s_ks_j|=2$ (or both).

    \item\label{obsrep:5}
        On the other hand, if $s_j$ and $s_{j+1}$ are consecutive letters in $w$, then $|s_j s_{j+1}|=\infty$ if and only if $j \succ_{w} j+1$.
\end{enumerate}
\end{obs}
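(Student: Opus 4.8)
The plan is to deduce all three statements from Tits' theorem recalled above — two reduced words represent the same element of $W$ exactly when one is obtained from the other by a sequence of type-(2) elementary operations, i.e.\ of $w$-elementary transpositions — together with one \emph{no-swap principle} which I would establish first: if $i<j$ are positions of the reduced word $w$ and $|s_is_j|=\infty$, then in every reduced representation of $\ow$ the occurrence that originates at position $i$ precedes the occurrence that originates at position $j$; by the very meaning of $\prec_w$ (cf.\ the mnemonic following Definition~\ref{def:poset}) this is precisely the assertion $i\succ_w j$ of part~(i).

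To prove the no-swap principle, I would take any reduced representation $w'$ of $\ow$ and a sequence $w=w_0,w_1,\dots,w_k=w'$ in which each $w_{t+1}$ arises from $w_t$ by a single type-(2) operation, and track through this sequence the two ``slots'' occupied by the letters originating at positions $i$ and $j$. Throughout, these slots hold the fixed letters $s_i$ and $s_j$ (a type-(2) operation changes positions, never letters), and one such operation exchanges the contents of two \emph{adjacent} slots only when those two letters commute. Hence, if the left/right order of the two tracked slots were ever reversed, the first operation doing so would have to exchange precisely these two slots, forcing $|s_is_j|=2$ — a contradiction. So that order is preserved, which gives the principle. The one point needing genuine care is this bookkeeping: the operations act on positions of the \emph{current} word, not of $w$, so the argument must be run in terms of tracked slots rather than fixed indices, after which repeated occurrences of one generator cause no trouble. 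This is the only real obstacle; the rest is formal.

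Part~(ii), and the remark that the converse of~(i) fails, both reduce to transitivity of $\prec_w$. If $i<k<j$ with $|s_is_k|=|s_ks_j|=\infty$, the principle yields $i\succ_w k$ and $k\succ_w j$, hence $i\succ_w j$ irrespective of $|s_is_j|$: this is the ``converse fails'' remark. For~(ii), assume $i<j$ (as in the displayed word) and $i\not\succ_w j$. The principle gives $|s_is_j|\neq\infty$. Also $s_i\neq s_j$: if $s_i=s_j$, reducedness forces some letter strictly between positions $i$ and $j$ not to commute with $s_i$ — otherwise the $j$-th occurrence could be slid leftward into position $i+1$, producing an $ss$ subword — say at position $k$, and then $|s_is_k|=|s_ks_j|=\infty$ would give $i\succ_w j$ by the previous sentence, a contradiction. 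Since $\Sigma$ is right-angled, this leaves $|s_is_j|=2$. The final clause of~(ii) is the same argument once more: if some $k\in\{i+1,\dots,j-1\}$ had $|s_is_k|=|s_ks_j|=\infty$, then $i\succ_w k\succ_w j$ would give $i\succ_w j$.

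For part~(iii): consecutive letters of a reduced word are distinct, so $|s_js_{j+1}|\in\{2,\infty\}$. If $|s_js_{j+1}|=\infty$, then $j\succ_w j+1$ by part~(i). Conversely, if $|s_js_{j+1}|=2$, then $(j\ j+1)$ is a $w$-elementary transposition, hence belongs to $\Rep(w)$; together with the identity, which also lies in $\Rep(w)$, it exhibits two elements of $\Rep(w)$ ordering the indices $j$ and $j+1$ oppositely, so $j\not\succ_w j+1$. Therefore $j\succ_w j+1$ if and only if $|s_js_{j+1}|=\infty$, which finishes the proof.
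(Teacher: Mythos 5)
Your proof is correct. The paper offers no proof of this statement at all — it is presented as a ``basic consequence of the definition,'' with part (ii) justified only by citing part (i) — and your argument (the letter-tracking ``no-swap'' principle extracted from Tits' theorem, plus transitivity of $\prec_w$, plus the single-transposition witness in (iii)) is exactly the reasoning the paper leaves implicit, so you are taking essentially the intended route, just written out in full; your extra care with the degenerate case $s_i=s_j$ in (ii) (and, implicitly, with $s_k\in\{s_i,s_j\}$, which is harmless since $|s_is_j|=2$ has already been established) goes slightly beyond what the paper records.
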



\begin{lem}[{\cite[Lemma 2.8]{DMSS}}]\label{le:reduced}
Let $w = w_1 \cdot s_i \dotsm s_j \cdot w_2 \in M_S$ be a reduced word.
If $i\not\succ_w j$, then there exist two reduced representations of $w$ of the form
\[ w_1 \dotsm s_i s_j \dotsm w_2 \quad \text{ and } \quad w_1\dotsm s_j s_i \dotsm w_2 , \]
i.e.,
the positions of $s_i$ and $s_j$ can be exchanged using only elementary operations on the generators $\{s_i, s_{i+1}, \dots, s_{j-1}, s_j\}$,
without changing the prefix $w_1$ and the suffix $w_2$.
\end{lem}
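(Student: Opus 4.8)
\emph{Proof plan.}
The plan is to argue by induction on $j-i$, making repeated use of Observation~\ref{obsrep2}: the hypothesis $i\not\succ_w j$ already forces $|s_is_j|=2$ by~\eqref{obsrep2:4} (so that, at the very end, the adjacent pair $s_is_j$ may be legitimately exchanged by a $\Sigma$-elementary operation of type~(2), which yields the second representation from the first), and by~\eqref{obsrep:3} an infinite bond always forces a $\succ_w$-relation. I shall also use that $\prec_w$ is a partial order, hence transitive, together with the (routine) fact that this relation behaves well under elementary operations: if $w$ is rewritten into another reduced representation of $\ow$ by operations carried out inside the block $s_i\dotsm s_j$ — the two marked letters $s_i$ and $s_j$ being tracked through the rewriting — then these marked letters are still not $\succ$-related in the new word. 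The base case $j=i+1$ is immediate, since then $|s_is_{i+1}|=2$ (again by~\eqref{obsrep2:4}) and the two representations are $w_1 s_i s_{i+1} w_2$ and $w_1 s_{i+1}s_i w_2$.

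For the inductive step, with $j-i\ge 2$, I split into cases according to the two boundary bonds $|s_is_{i+1}|$ and $|s_{j-1}s_j|$. If $|s_is_{i+1}|=2$ (and, symmetrically, if $|s_{j-1}s_j|=2$), I first apply the type-(2) operation exchanging $s_i$ and $s_{i+1}$; in the resulting reduced word the marked letter $s_i$ sits at position $i+1$, the block joining it to $s_j$ is the strictly shorter word $s_is_{i+2}\dotsm s_j$, and the two marked letters are still not $\succ$-related. Applying the induction hypothesis to this shorter block, with the enlarged prefix $w_1 s_{i+1}$ and the unchanged suffix $w_2$, produces the two desired representations of $w$; all operations remain inside the original block, so $w_1$ and $w_2$ are never touched.

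The remaining case $|s_is_{i+1}|=|s_{j-1}s_j|=\infty$ is the heart of the matter. (It cannot occur when $j-i=2$: we would get $i\succ_w i+1\succ_w j$, hence $i\succ_w j$, contrary to hypothesis.) By~\eqref{obsrep:3} we have $i\succ_w i+1$ and $j-1\succ_w j$, so transitivity together with $i\not\succ_w j$ yields $i+1\not\succ_w j$ (and, symmetrically, $i\not\succ_w j-1$). I first apply the induction hypothesis to the strictly shorter block $s_{i+1}\dotsm s_j$, using $i+1\not\succ_w j$ and choosing the representation in which $s_j$ stands immediately before $s_{i+1}$; this rewrites $w$, by operations inside that block, into $w_1\,s_i\,A\,s_j\,s_{i+1}\,B\,w_2$, where $A$ and $B$ are words in the $j-i-2$ letters $s_{i+2},\dots,s_{j-1}$. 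Now the subword $s_i\,A\,s_j$ is contiguous, its two end letters are the marked letters $s_i$ and $s_j$ (still not $\succ$-related), and it is strictly shorter than $s_i\dotsm s_j$ since $A$ has length at most $j-i-2$. A second application of the induction hypothesis, this time to $s_i\,A\,s_j$, then rewrites $w$ into $w_1\,C\,s_is_j\,D\,s_{i+1}\,B\,w_2$ and into $w_1\,C\,s_js_i\,D\,s_{i+1}\,B\,w_2$, which are exactly the two required representations; again all operations stay inside $s_i\dotsm s_j$.

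I expect the main obstacle to be precisely this last case: one cannot simply slide $s_i$ to the right or $s_j$ to the left, because the letter directly adjacent to each need not commute with it. The trick is to first use the inductive statement itself to pull $s_j$ next to $s_{i+1}$ — which shrinks the relevant block — and only then to recurse a second time. The accompanying technical nuisance is the bookkeeping for $\prec_w$: every time $w$ is rearranged inside the block one must verify that ``$i\not\succ_w j$'' is preserved, which is guaranteed by the compatibility of $\prec_w$ with elementary operations mentioned at the outset.
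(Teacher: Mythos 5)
Your proof is correct. One caveat on the comparison you ask for: the paper does not actually prove Lemma~\ref{le:reduced} — it is imported from \cite[Lemma~2.8]{DMSS} — so there is no in-paper argument to measure yours against, and I judge it on its own. Your induction on the distance between the two marked positions is sound: the base case and the case where one boundary bond commutes are immediate, and in the hard case $|s_is_{i+1}|=|s_{j-1}s_j|=\infty$ your two-stage recursion works, since Observation~\ref{obsrep2}\eqref{obsrep:3} and transitivity of $\prec_w$ give $i+1\not\succ_w j$ (and rule out $j-i=2$), the first application of the inductive statement produces a representation $w_1\,s_i\,A\,s_j\,s_{i+1}\,B\,w_2$ with all operations inside the block, and the second application is to a strictly shorter contiguous block $s_i\,A\,s_j$, so a strong induction closes. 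The one ingredient you should not simply label ``routine'' without a line of justification is the invariance of the non-relation under tracking the two marked letters through a chain of elementary operations: it is true, and the proof is exactly what you hint at — the chain from $w$ to $w'$ can be reversed and concatenated with any chain issuing from $w'$, so by Tits' theorem the reduced representations reachable from $w$ and from $w'$ coincide, and the tracked relative order of the two marked letters corresponds under this bijection; note also that tracking is unambiguous because elementary transpositions only exchange distinct commuting adjacent letters, so equal letters never cross. Spelling that out in two or three lines would make your argument a complete, self-contained proof of the cited lemma.
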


\subsection{Firm elements of right-angled Coxeter groups}\label{ss:firm}

In this section we define firm elements in a right-angled Coxeter group $W$
and we introduce the concept of firmness to measure ``how firm'' an arbitrary elements of $W$ is.
This concept will be used over and over throughout the paper.
See, in particular, Definition~\ref{def:Ai}, Theorem~\ref{th:flex}, Theorem~\ref{th:fixedpointset} and Proposition~\ref{pr:trans-apt}.
Our main result in this section is Theorem~\ref{th:boundforN}, showing that the firmness of elements cannot drop below a certain value once
they become sufficiently long.

\begin{defn}\label{def:formF}
Let $\ow \in W$ be represented by some reduced word $w = s_1\dotsm s_\ell \in M_S$.
\begin{enumerate}[(i)]
    \item\label{it:firm}
        We say that $\ow$ is \emph{firm} if $i \succ_w \ell$ for all $i \in \{ 1,\dots,\ell-1 \}$.
        In other words, $\ow$ is firm if its final letter $s_\ell$ is in the final position in each possible reduced representation of $\ow$.
        Equivalently, $\ow$ is firm if and only if there is a unique $r \in S$ such that $l(\ow r) < l(\ow)$.
    \item
        Let $F^\#(\ow)$ be the largest $k$ such that $\ow$ can be represented by a reduced word in the form
        \[ s_1 \dotsm s_k t_{k+1} \dotsm t_\ell, \text{ with } s_1 \dotsm s_k \text{ firm} . \]
        We call $F^\#(\ow)$ the \emph{firmness} of $\ow$. 
        We will also use the notation $F^\#(w) := F^\#(\ow)$. 
\end{enumerate}
\end{defn}


\begin{lem}\label{le:firmament}
Let $w=s_1\dotsm s_kt_{k+1}\dotsm t_\ell$ be a reduced word such that $s_1\dotsm s_k$ is firm and $F^\#(w)=k$.
Then
\begin{enumerate}[\rm (i)]
    \item\label{it:skti2}
        $|s_kt_i|=2$ for all $i\in\{k+1, \dots, \ell\}$.
    \item\label{it:relfirm}
        $i \succ_w k$ for all $i \in \{ 1,\dots,k-1 \}$.
    \item\label{it:Fwr}
        Let $r\in S$. If $l(\ow r)>l(\ow)$, then $F^\#(wr)\geq F^\#(w)$.
\end{enumerate}
\end{lem}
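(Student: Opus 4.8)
I would prove the three parts in the order \eqref{it:Fwr}, \eqref{it:relfirm}, \eqref{it:skti2}: the first two are short and self-contained, and \eqref{it:skti2} builds on \eqref{it:relfirm}.

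\emph{Part \eqref{it:Fwr}} is immediate and does not use the specific hypotheses. Pick a reduced representation $v=v_1\dotsm v_m\, v_{m+1}\dotsm v_\ell$ of $\ow$ with $v_1\dotsm v_m$ firm and $m=F^\#(\ow)$. Since $l(\ow r)>l(\ow)$, the word $vr$ is reduced, hence a reduced representation of $\ow r$ of the shape ``$v_1\dotsm v_m$ firm, then $v_{m+1}\dotsm v_\ell r$''; thus $F^\#(wr)\ge m=F^\#(w)$.
For \emph{part \eqref{it:relfirm}}, suppose $j\not\succ_w k$ for some $j\in\{1,\dots,k-1\}$ and apply Lemma~\ref{le:reduced} to $w=(s_1\dotsm s_{j-1})\cdot s_j\dotsm s_k\cdot(t_{k+1}\dotsm t_\ell)$: the positions of $s_j$ and $s_k$ can be exchanged using only elementary operations on $\{s_j,\dots,s_k\}$, leaving the prefix $s_1\dotsm s_{j-1}$ and the suffix $t_{k+1}\dotsm t_\ell$ unchanged. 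Deleting that (literally unchanged) suffix from the resulting word leaves a reduced representation of $\overline{s_1\dotsm s_k}$ in which $s_k$ precedes $s_j$, contradicting the firmness of $s_1\dotsm s_k$ (which forces $j\succ_{s_1\dotsm s_k}k$).

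For \emph{part \eqref{it:skti2}}, suppose $|s_kt_{i_0}|=\infty$ for some $i_0\in\{k+1,\dots,\ell\}$; I will contradict $F^\#(w)=k$ by showing $F^\#(\ow)\ge k+1$. Put $A:=\{i_0\}\cup\{\,i\in I_w: i\succ_w i_0\,\}$. By transitivity of $\succ_w$, $A$ is closed under passing to $\succ_w$-larger positions. Moreover $k\succ_w i_0$ by Observation~\ref{obsrep2}\eqref{obsrep:3}, and $j\succ_w k$ for each $j<k$ by part \eqref{it:relfirm}, so transitivity gives $\{1,\dots,k,i_0\}\subseteq A$ and hence $|A|\ge k+1$. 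I then bring the letters indexed by $A$ to the front. Writing $B:=A\cap\{k+1,\dots,i_0\}$ (so $\max B=i_0$) and $C:=\{k+1,\dots,i_0\}\setminus A$, for any $b\in B$, $c\in C$ with $c<b$ one cannot have $c\succ_w b$, since $b\in A$ means $b=i_0$ or $b\succ_w i_0$, so $c\succ_w b$ would yield $c\succ_w i_0$, i.e.\ $c\in A$. Hence, by Observation~\ref{obsrep2}\eqref{obsrep:5}, $s_c$ and $s_b$ commute whenever adjacent, so a finite sequence of elementary operations of type~(2) (a ``bubble sort'' within the positions $\{k+1,\dots,i_0\}$, leaving those $\le k$ and $>i_0$ alone) transforms $w$ into a reduced representation of $\ow$ whose first $|A|$ letters are $s_1,\dots,s_k$ followed by the letters $s_i$ ($i\in B$) in increasing order of $i$, ending with $s_{i_0}$. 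Call this prefix $p$; it has length $|A|$. It is firm: any reduced representation of $\overline p$, followed by the remaining letters, is a reduced representation of $\ow$, so every position of $p$ other than the one carrying $s_{i_0}$, being $\succ_w i_0$, is $\succ$-above that position already within $p$; hence $s_{i_0}$ is the last letter of every reduced representation of $\overline p$. Therefore $F^\#(\ow)\ge|A|\ge k+1$, the desired contradiction.

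The delicate point is the construction in \eqref{it:skti2}: one must check that the $\succ_w$-closed set $A$ is genuinely realizable as a prefix of a reduced representation — justifying each bubble-sort step as a legitimate elementary operation via Observation~\ref{obsrep2} — and that the resulting prefix is firm, for which the crucial small fact is that passing to a prefix of a reduced word only adds relations to $\succ_w$, so that a $\succ_w$-maximal position among those present stays the final letter in every reduced representation. One should also dispose of the case where some of $s_1,\dots,s_k,t_{i_0}$ coincide, but $|s_kt_{i_0}|=\infty$ already rules out $t_{i_0}=s_k$, and the firmness of $s_1\dotsm s_k$ makes $s_k$ its only right descent, which is all that is needed.
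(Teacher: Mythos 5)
Your proof is correct. Parts \eqref{it:relfirm} and \eqref{it:Fwr} coincide with the paper's arguments (Lemma~\ref{le:reduced} applied to a hypothetical violation, and the observation that appending $r$ preserves firm prefixes). For part \eqref{it:skti2} the underlying idea is the same as in the paper --- manufacture a firm prefix of length $>k$ from an offending $t_{i_0}$ with $|s_kt_{i_0}|=\infty$ --- but your execution differs: the paper takes the minimal such index and tersely asserts that shuffling that letter leftwards ``as much as possible'' yields a firm prefix, whereas you realize the order ideal $I_w(i_0)\cup\{i_0\}$ explicitly, bound its size below by $k+1$ using part \eqref{it:relfirm} together with Observation~\ref{obsrep2}\eqref{obsrep:3} and transitivity, and verify firmness of the resulting prefix via the bubble-sort and the fact that passing to a prefix only enlarges the order $\succ$. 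In effect you re-derive what the paper only records afterwards, without proof, as Observation~\ref{obs:Iposet}\eqref{it:Iw} and \eqref{it:Fwmax}; this buys a complete justification of the firmness claim that the paper's one-line proof of \eqref{it:skti2} leaves implicit, at the cost of extra length and of making \eqref{it:skti2} depend on \eqref{it:relfirm} (harmless, since your \eqref{it:relfirm} uses only Lemma~\ref{le:reduced}).
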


\begin{proof}
\begin{enumerate}[(i)]
    \item
        Assume the contrary and let $j$ be minimal such that $|s_kt_j|=\infty$.
        Using elementary operations to swap $t_j$ to the left in $w$ as much as possible, we rewrite
        \[ w\sim s_1\dotsm s_k t_1'\dotsm t_p't_j\dotsm \]
        as a word with $s_1\dotsm s_k t_1'\dotsm t_p't_j$ firm, which is a contradiction to the maximality of $k$.
    \item
        The fact that the prefix $p = s_1 \dotsm s_k$ is firm tells us that $i \succ_p k$ for all $i \in \{ 1,\dots,k-1 \}$.
        By Lemma~\ref{le:reduced}, this implies that also $i \succ_w k$ for all $i \in \{ 1,\dots,k-1 \}$.
    \item
        Since $l(\ow r) > l(\ow)$, firm prefixes of $w$ are also firm prefixes of $wr$, hence the result.
    \qedhere
\end{enumerate}
\end{proof}

The following definition will be a useful tool to identify which letters of the word appear in a firm subword.
\begin{defn}\label{def:Iposet}
Let $w=s_1\dotsm s_\ell \in M_S$ be a reduced word and consider the poset $(I_w, \prec_w)$ as in Definition~\ref{def:poset}.
For any $i\in \{1, \dots, \ell\}$, we define
\[
I_{w}(i)=\bigl\{ j\in \{1, \dots, \ell\} \mid j\succ_w i \bigr\}.
\]
In words, $I_w(i)$ is the set of indices $j$ such that $s_j$  comes at the left of $s_i$ in any reduced representation of the element $w\in W$.
\end{defn}

\begin{obs}\label{obs:Iposet}
Let $w=s_1\dotsm s_\ell \in M_S$ be a reduced word.
\begin{enumerate}[(i)]
    \item\label{it:Iw}
    Let $i\in \{1, \dots, \ell\}$ and write $I_w(i) = \{ j_1,\dots,j_k \}$ with $j_p < j_{p+1}$ for all~$p$.
    Then we can perform elementary operations on $w$ so that
    \[
    w\sim s_{j_1}\dotsm s_{j_k} s_i t_1 \dotsm t_q
    \]
    and the word $s_{j_1}\dotsm s_{j_k} s_i$ is firm.

    In particular, if $I_w(i)=\emptyset$, then we can rewrite $w$ as $s_iw_1$.



    \item\label{it:Ic}
    If $i\succ_w j$, then $I_w(i)\subsetneq I_w(j)$.

    \item\label{it:Fwmax}
    It follows from~\eqref{it:Iw} that $F^\#(w)=\max_{i\in \{1, \dots, \ell\}}|I_w(i)|+1$.
\end{enumerate}
\end{obs}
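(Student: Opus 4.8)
The plan is to prove part~\eqref{it:Iw} first; parts~\eqref{it:Ic} and~\eqref{it:Fwmax} then follow with little extra work. For~\eqref{it:Iw} two things need checking: that the rewriting can be carried out with $\Sigma$\dash elementary operations, and that the prefix $s_{j_1}\dotsm s_{j_k}s_i$ it produces is firm. For the first, the key observation is that $I_w(i)$ is an \emph{up-set} of the poset $(I_w,\prec_w)$: if $j\in I_w(i)$ and $j'\succ_w j$, then $j'\succ_w i$ by transitivity of $\prec_w$, so $j'\in I_w(i)$. Consequently the total order on $\{1,\dots,\ell\}$ that lists $j_1<\dots<j_k$ first, then $i$, and then the remaining indices in increasing order is a linear extension of $\prec_w$ --- here one uses that no element of $I_w(i)$ lies $\prec_w$\dash below $i$ (antisymmetry) and that increasing\dash index order always refines $\prec_w$, since $a\succ_w b$ forces $a<b$. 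Iterating Lemma~\ref{le:reduced} then converts $w$ into $s_{j_1}\dotsm s_{j_k}\,s_i\,t_1\dotsm t_q$. For the second, note that appending $t_1\dotsm t_q$ to \emph{any} reduced representation of $\overline{s_{j_1}\dotsm s_{j_k}s_i}$ again yields a word of length $\ell=l(\ow)$ representing $\ow$, hence a reduced representation of $\ow$; since $j_a\succ_w i$ for every $a$, the occurrence $s_i$ must lie to the right of every occurrence $s_{j_a}$ in it, and therefore also in the reduced representation of $\overline{s_{j_1}\dotsm s_{j_k}s_i}$ alone. Thus $s_i$ is always the last letter, i.e.\ $s_{j_1}\dotsm s_{j_k}s_i$ is firm in the sense of Definition~\ref{def:formF}\eqref{it:firm}. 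The case $k=0$ gives the stated consequence $w\sim s_iw_1$.

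Part~\eqref{it:Ic} is purely formal: if $i\succ_w j$ and $m\in I_w(i)$, then $m\succ_w i\succ_w j$, so $m\in I_w(j)$; hence $I_w(i)\subseteq I_w(j)$, while $i\in I_w(j)\setminus I_w(i)$ because $i\succ_w j$ but $i\not\succ_w i$, so the inclusion is strict. For part~\eqref{it:Fwmax} I would prove the two inequalities separately. The inequality $F^\#(w)\ge\max_i|I_w(i)|+1$ is precisely what~\eqref{it:Iw} provides: for every index $i$ it produces a reduced representation of $\ow$ with a firm prefix of length $|I_w(i)|+1$. For the reverse inequality, set $m:=F^\#(w)$ and fix a reduced representation $w'=u_1\dotsm u_m v_{m+1}\dotsm v_\ell$ of $\ow$ with $u_1\dotsm u_m$ firm. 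The posets $(I_{w'},\prec_{w'})$ and $(I_w,\prec_w)$ are order\dash isomorphic, so $\max_i|I_{w'}(i)|=\max_i|I_w(i)|$, and it suffices to show $|I_{w'}(m)|\ge m-1$, i.e.\ $j\succ_{w'}m$ for every $j<m$. If some $j<m$ had $j\not\succ_{w'}m$, then Lemma~\ref{le:reduced}, applied to the segment of $w'$ from position $j$ to position $m$, would give a reduced representation of $\ow$, differing from $w'$ only by elementary operations among $\{u_j,\dots,u_m\}$, in which $u_m$ precedes $u_j$; its first $m$ letters would then form a reduced representation of $\overline{u_1\dotsm u_m}$ with $u_m$ not in final position, contradicting the firmness of $u_1\dotsm u_m$. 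Hence $I_{w'}(m)=\{1,\dots,m-1\}$, giving $\max_i|I_w(i)|\ge m-1=F^\#(w)-1$.

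The step I expect to demand the most care is the first half of~\eqref{it:Iw}: one has to keep track of individual occurrences of (possibly repeated) generators through the chain of elementary operations and make precise that ``reachable from $w$ by $\Sigma$\dash elementary operations'' coincides with ``a linear extension of $\prec_w$'' --- the up\dash set property is what drives this, together with Lemma~\ref{le:reduced}. Everything else reduces to transitivity and antisymmetry of $\prec_w$ and the definition of firmness.
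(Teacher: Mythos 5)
The paper records Observation~\ref{obs:Iposet} without any proof, so there is nothing to compare your argument against; what you wrote is essentially correct and is the natural way to establish it. The genuinely substantive points are all handled properly: the firmness of the prefix (append $t_1\dotsm t_q$ to an arbitrary reduced representation of $\overline{s_{j_1}\dotsm s_{j_k}s_i}$ and use $j_a\succ_w i$, with the implicit but harmless occurrence-tracking through Tits' theorem), the purely order-theoretic part~(ii), and in part~(iii) the inequality $F^\#(w)\le\max_i|I_w(i)|+1$, which you prove by exactly the device the paper uses for Lemma~\ref{le:firmament}\eqref{it:relfirm}, namely Lemma~\ref{le:reduced} applied inside the firm prefix; the reverse inequality is the ``follows from~(i)'' that the paper alludes to.

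The one step where you assert more than you prove is the passage from ``the order $j_1,\dots,j_k,i,\dots$ is a linear extension of $\prec_w$'' to ``it is reachable from $w$ by elementary operations''. That every linear extension of $\prec_w$ is realized by an element of $\Rep(w)$ is true, but it is itself a small lemma which the paper does not provide, and ``iterating Lemma~\ref{le:reduced}'' does not by itself explain how an arbitrary linear extension is reached. You do not need the general statement: argue by induction on the number of positions $m<i$ with $m\notin I_w(i)$. If there are none, then $I_w(i)=\{1,\dots,i-1\}$ and $w$ already has the desired shape. Otherwise let $m$ be the largest such position; by your up-set property every position strictly between $m$ and $i$ lies in $I_w(i)$. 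Apply Lemma~\ref{le:reduced} to the pair $(m,i)$ and take the representation in which $s_i$ immediately precedes $s_m$: since the new word is $\sigma.w$ for some $\sigma\in\Rep(w)$, every occurrence $p$ with $p\succ_w i$ still lies to the left of the occurrence of $s_i$, while the prefix $s_1\dotsm s_{m-1}$ and the suffix $s_{i+1}\dotsm s_\ell$ are untouched; hence the number of offending occurrences in front of the occurrence of $s_i$ drops by one. Iterating gives a reduced word $u\,s_i\,v$ in which $u$ consists precisely of the occurrences indexed by $I_w(i)$, and your firmness argument applies verbatim to $u\,s_i$. (Whether $u$ is literally $s_{j_1}\dotsm s_{j_k}$ in increasing index order is immaterial for the firmness claim and for~(iii); if you want that exact order, reorder $u$ by the same induction.) With that induction spelled out, your proposal is complete.
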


\begin{rem}
If the Coxeter system $(W,S)$ is spherical, then $F^\#(\ow)=1$ for all $\ow \in W$.
Indeed, as each pair of distinct generators commute, we always have $I_w(i)=\emptyset$.
\end{rem}

The next definition will allow us to deal with possibly infinite words.
\begin{defn}\label{sphericalset}
\begin{enumerate}[(i)]
    \item
    A (finite or infinite) sequence $(r_1, r_2, \dotsc)$ of letters in $S$ will be called a \emph{reduced increasing sequence}
    if $l(r_1\dotsm r_i) < l(r_1\dotsm r_ir_{i+1})$ for all $i \geq 1$.
    \item
    Let $w \in M_S$.
    A sequence $(r_1, r_2, \dotsc)$ of letters in $S$ will be called a \emph{reduced increasing $w$-sequence}
    if $l(w r_1\dotsm r_i) < l(w r_1\dotsm r_ir_{i+1})$ for all $i \geq 0$.
\end{enumerate}
\end{defn}

\begin{lem}\label{lem:auxfinite}
Let  $\alpha = (r_1, r_2, \dotsc)$ be a reduced increasing sequence in $S$.
Assume that each subsequence of $\alpha$ of the form
\[
(r_{a_1}, r_{a_2}, \dots ) \text{ with }|r_{a_i}r_{a_{i+1}}|=\infty \text{ for all } i\]
has $\leq b$ elements.
Then there is some positive integer $f(b)$ depending only on $b$ and on the Coxeter system $(W,S)$, such that $\alpha$ has $\leq f(b)$ elements.
\end{lem}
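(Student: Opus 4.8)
The plan is to bound, for each generator $s \in S$, the number of times $s$ occurs in $\alpha$, and then conclude by summing over the finite set $S$. The key mechanism is that reducedness of the prefixes $r_1 \dotsm r_i$ forces a ``non-commuting obstruction'' to sit between any two successive occurrences of the same generator, and the hypothesis on chains of infinite-order pairs caps how many such obstructions can pile up.

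First I would record the elementary fact that a contiguous factor of a reduced word is again reduced. Using that $\alpha$ being a reduced increasing sequence means precisely that $r_1 \dotsm r_i$ is a reduced word for every $i$, I would then prove the following: if a generator $s$ occurs at positions $p < q$ of $\alpha$ with no occurrence of $s$ strictly between them, then there is a position $q'$ with $p < q' < q$ and $|r_{q'} s| = \infty$. Indeed, if instead every letter strictly between positions $p$ and $q$ commuted with $s$, then $\Sigma$\dash elementary operations of type (2) would push the letter at position $p$ to the right until it is adjacent to the one at position $q$, and an operation of type (1) would delete the resulting $ss$, contradicting that $r_1 \dotsm r_q$ is reduced. (This is the only place right-angledness enters: a letter $t \neq s$ not commuting with $s$ must satisfy $|st| = \infty$.)

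Next I would fix $s \in S$, let $p_1 < p_2 < \dotsm$ be all positions at which $s$ occurs in $\alpha$, and for each consecutive pair choose, as above, a position $q_j$ with $p_j < q_j < p_{j+1}$ and $|r_{q_j} s| = \infty$. Then for any finite prefix $p_1 < \dotsm < p_m$ of these occurrences,
\[
(r_{p_1}, r_{q_1}, r_{p_2}, r_{q_2}, \dots, r_{q_{m-1}}, r_{p_m})
\]
is a subsequence of $\alpha$ in which any two consecutive entries have product of infinite order, since each such pair equals $(s, r_{q_j})$ up to order and $|s r_{q_j}| = \infty$. By hypothesis this subsequence has at most $b$ entries, so $2m-1 \leq b$; as this holds for every finite such $m$, the generator $s$ occurs at most $\lfloor (b+1)/2 \rfloor$ times in $\alpha$, and in particular $\alpha$ is finite.

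Finally, summing over $s \in S$ would give that $\alpha$ has at most $|S| \cdot \lfloor (b+1)/2 \rfloor$ entries, so I would take $f(b) := |S| \cdot \lfloor (b+1)/2 \rfloor$. I do not expect a genuine obstacle here; the argument is essentially a pigeonhole count, and the only step that really uses the structure of $\Sigma$ (rather than pure counting) is the reducedness argument of the second paragraph, which is where I would be most careful.
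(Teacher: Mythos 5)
Your proof is correct: the prefixes $r_1\dotsm r_i$ are reduced words by the definition of a reduced increasing sequence, your separation claim (a letter $t$ with $|st|=\infty$ must occur strictly between two consecutive occurrences of $s$, else commutations followed by a deletion of $ss$ would shorten a reduced prefix) is sound and is exactly where right-angledness enters, and the alternating subsequence $(s,r_{q_1},s,\dots,s)$ legitimately falls under the hypothesis, giving $2m-1\le b$ occurrences of each generator and hence the bound $|S|\lfloor (b+1)/2\rfloor$.

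The route differs from the paper's in structure, though the key mechanism is shared. The paper argues by induction on $|S|$: it treats the spherical case via the longest element, then picks a single generator $s$ that fails to commute with some other generator, bounds the number of occurrences of that one $s$ by the same alternating-subsequence trick, and invokes the induction hypothesis on the segments between consecutive $s$'s, which are reduced increasing sequences over $S\setminus\{s\}$; the resulting $f(b)$ is defined recursively and depends on the Coxeter system in a less transparent way. You instead apply the occurrence bound uniformly to every generator (including those commuting with everything else, which then occur at most once) and simply sum over the finite set $S$. This avoids the induction entirely, needs no case distinction for spherical systems, and yields an explicit bound $f(b)=|S|\cdot\lfloor (b+1)/2\rfloor$ depending on $(W,S)$ only through its rank --- a slightly stronger and cleaner statement than what the paper's recursion produces, and fully sufficient for the later application in Lemma~\ref{le:ris-bound}.
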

\begin{proof}
We will prove this result by induction on $|S|$; the case $|S| = 1$ is trivial.

Suppose now that $|S| \geq 2$.
If $(W,S)$ is a spherical Coxeter group, then the result is obvious since the length of any reduced increasing sequence is bounded by the length of the longest element of $W$.
We may thus assume that there is some $s\in S$ that does not commute with some other generator in $S\setminus \{s\}$.

Since the sequence $\alpha$ is a reduced increasing sequence, we know that between any two $s$'s, there must be some $t_i$ such that $|st_i|=\infty$.
Consider the subsequence of $\alpha$ given by
\[ (s, t_1, s, t_2, \dotsc ).  \]
This subsequence has $\leq b$ elements by assumption, and between any two generators $s$ in the original sequence $\alpha$, we only use letters in $S\setminus\{s\}$.
The result now follows from the induction hypothesis.
\end{proof}

\begin{lem}\label{le:ris-bound}
Let $\ow \in W$.
Then there is some $k(\ow)\in\N$, depending only on $\ow$, such that for every reduced increasing $w$-sequence $(r_1, r_2,\dotsc)$ in $S$, we have
\[ F^\#(wr_1\dotsm r_{k(\ow)}) > F^\#(w) . \]
\end{lem}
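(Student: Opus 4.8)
The plan is to combine Lemma~\ref{lem:auxfinite} with the structural facts about firmness from Lemma~\ref{le:firmament} and Observation~\ref{obs:Iposet}. First I would fix a reduced word $w = s_1 \dotsm s_\ell$ representing $\ow$ and consider an arbitrary reduced increasing $w$-sequence $\alpha = (r_1, r_2, \dotsc)$. If along the way the firmness strictly increases, we are done, so the crucial case is when $F^\#(w r_1 \dotsm r_i) = F^\#(w)$ for all $i$ up to some point $N$; I want to bound such an $N$ by a constant depending only on $\ow$. Write $k := F^\#(w)$.

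The key observation is that, as long as firmness stays equal to $k$, the concatenated word $w r_1 \dotsm r_i$ keeps a firm prefix of length exactly $k$, and the new letters $r_1, \dots, r_i$ can never ``enter'' a firm prefix of length larger than $k$. Tracking a fixed firm prefix $p = p_1 \dotsm p_k$ (we may assume, after elementary operations, that $w$ itself starts with such a $p$), each new letter $r_j$ that does \emph{not} commute with $p_k$ would, by the argument of Lemma~\ref{le:firmament}\eqref{it:skti2} (moving $r_j$ leftward past the commuting tail), produce a firm prefix of length $k+1$, contradicting $F^\#(w r_1 \dotsm r_i) = k$. Hence every $r_j$ ($1 \le j \le N$) commutes with $p_k$. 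More is true: running this reasoning inside the suffix, the subword of $(r_1, \dots, r_N)$ consisting of letters that pairwise fail to commute must be short — if it were long, one could peel off a nontrivial firm chunk from the suffix and attach it to $p$, again raising the firmness. Making this precise is the main obstacle: one needs to show that a long $\infty$-chain among the $r_j$'s forces $F^\#$ to exceed $k$, which I would do by combining Observation~\ref{obsrep2}\eqref{obsrep:3} (non-commuting consecutive letters are comparable in $\prec$) with Observation~\ref{obs:Iposet}\eqref{it:Fwmax} ($F^\#(w) = \max_i |I_w(i)| + 1$), exhibiting a letter $r_j$ whose $I$-set in $w r_1 \dotsm r_N$ has size at least $k$.

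Once we know that any $\infty$-subsequence of $(r_1, r_2, \dotsc)$ occurring while firmness remains $k$ has length bounded by some $b = b(\ow, k)$ (concretely $b$ can be taken of the order of $k$ plus the size of the longest spherical residue), Lemma~\ref{lem:auxfinite} applies directly: the sequence $(r_1, r_2, \dotsc)$, being a reduced increasing sequence all of whose $\infty$-subsequences have $\le b$ elements, has at most $f(b)$ elements before such a bound is violated — but here, since $\alpha$ may be long, what we actually get is that within any $f(b)+1$ consecutive steps the firmness must strictly increase. Setting $k(\ow) := f(b) + 1$ (a number depending only on $b$, hence only on $\ow$, via the Coxeter system) yields $F^\#(w r_1 \dotsm r_{k(\ow)}) > F^\#(w)$ for every reduced increasing $w$-sequence, as claimed. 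The one delicate point to get exactly right is the uniformity of the bound $b$ in terms of $\ow$ alone (independent of the choice of $\alpha$), which follows because $b$ only involves $k = F^\#(\ow)$ and the fixed data of $(W,S)$.
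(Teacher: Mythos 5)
Your proposal is correct and follows essentially the same route as the paper: assuming the firmness stays equal to $F^\#(w)$, bound the $\infty$-subsequences among the appended letters by a constant depending only on $F^\#(w)$, then invoke Lemma~\ref{lem:auxfinite} and take $k(\ow)=f(b)+1$. The step you flag as the main obstacle is handled in the paper exactly as you sketch: setting $I_i$ to be the $I$-set of the letter $r_i$ in $wr_1\dotsm r_i$, the constancy of firmness gives $|I_i|\leq F^\#(w)-1$ by Observation~\ref{obs:Iposet}\eqref{it:Fwmax}, while for $i<j$ with $|r_ir_j|=\infty$ Observations~\ref{obsrep2}\eqref{obsrep:3} and~\ref{obs:Iposet}\eqref{it:Ic} give $I_i\subsetneq I_j$, so any $\infty$-chain has at most $F^\#(w)$ terms.
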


\begin{proof}
Assume that there is a reduced increasing $w$-sequence $\alpha = (r_1, r_2, \dotsc)$ in $S$ such that
\begin{equation}\label{eq:Feq}
    F^\#(wr_1 \dotsm r_i) = F^\#(w) \text{ for all } i . \tag{$*$}
\end{equation}
Let $w_0=w$, $w_i=w_{i-1}r_i$ and denote $I_i = I_{w_i}(l(\overline{w})+i)$ for all $i$.
Let $b=F^\#(w)$. By assumption~\eqref{eq:Feq} and Observation~\ref{obs:Iposet}\eqref{it:Fwmax}, we have $|I_i|\leq b-1$ for all $i$.
Moreover, if $i<j$ with $|r_ir_j|=\infty$, then $I_i\subsetneq I_j$ by Observations~\ref{obsrep2}\eqref{obsrep:3} and~\ref{obs:Iposet}\eqref{it:Ic};
it follows that each subsequence of $\alpha$ of the form
\[
(r_{a_1}, r_{a_2}, \dots ) \text{ with }|r_{a_i}r_{a_{i+1}}|=\infty \text{ for all } i\]
has at most $b$ elements.
By Lemma~\ref{lem:auxfinite}, this implies that the sequence $\alpha$ has at most $f(b)$ elements.
We conclude that every reduced increasing $w$\dash sequence $(r_1, r_2,\dotsc, r_{k(\ow)})$ in $S$ with $k(\ow) := f(F^\#(w)) + 1$
must have strictly increasing firmness.
\end{proof}

\begin{thm}\label{th:boundforN}
Let $(W,S)$ be a right-angled Coxeter system.
For all $n\geq 0$, there is some $d(n) \in \N$ depending only on $n$, such that
$F^\#(\ow)>n$ for all $\ow \in W$ with $l(\ow)>d(n)$.
\end{thm}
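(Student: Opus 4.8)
The plan is to iterate Lemma~\ref{le:ris-bound}. That lemma gives, for each $\ow \in W$, a bound $k(\ow)$ such that appending $k(\ow)$ letters of any reduced increasing $w$-sequence strictly increases the firmness; the key point I need to extract is that $k(\ow)$ depends on $\ow$ only through $F^\#(\ow)$, since in the proof of Lemma~\ref{le:ris-bound} one takes $k(\ow) = f(F^\#(w)) + 1$ with $f$ the function from Lemma~\ref{lem:auxfinite}, which depends only on $(W,S)$. So I will first record this refinement: there is a function $g \colon \N \to \N$ (namely $g(m) = f(m)+1$), depending only on $(W,S)$, such that for every $\ow$ with $F^\#(\ow) = m$ and every reduced increasing $w$-sequence $(r_1, r_2, \dots)$, we have $F^\#(w r_1 \dotsm r_{g(m)}) > m$.

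Next I would set up the induction on $n$. For $n = 0$ take $d(0) = 0$: every nonempty reduced word has firmness at least $1$ (the last letter of a reduced word of length $1$ forms a firm prefix, and in general $F^\#(w) \geq 1$ whenever $l(\ow) \geq 1$), so $l(\ow) > 0$ forces $F^\#(\ow) \geq 1 > 0$. For the inductive step, suppose $d(n)$ works for $n$, and let $\ow$ be any element with $l(\ow)$ large (to be specified). Write a reduced word $w = s_1 \dotsm s_\ell$ for $\ow$ and let $u = s_1 \dotsm s_{d(n)+1}$ be its prefix of length $d(n)+1$ and $v$ the remaining suffix, so that $\ow = \overline{uv}$ and $w = u \cdot v$ is reduced (hence $v$, read letter by letter, is a reduced increasing $u$-sequence). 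By the induction hypothesis $F^\#(u) > n$, i.e.\ $F^\#(u) \geq n+1$. Now I apply the refined Lemma~\ref{le:ris-bound} repeatedly along $v$: since $F^\#(u) \geq n+1$, after the first $g(n+1)$ letters of $v$ the firmness exceeds $n+1$, hence is at least $n+2$; but here I must be slightly careful, because $g$ was stated for inputs equal to the current firmness, whereas after one application the firmness could be larger than $n+1$. To handle this cleanly I would instead argue: set $K := \max\{ g(m) : 1 \leq m \leq n+1 \}$ (a finite max depending only on $(W,S)$ and $n$), and observe that from any element of firmness $\leq n+1$, appending $K$ letters of a reduced increasing sequence raises the firmness by at least $1$ — this follows because $g(m) \leq K$ and extra letters beyond $g(m)$ cannot decrease firmness, by Lemma~\ref{le:firmament}\eqref{it:Fwr}. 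Therefore, starting from $u$ with $F^\#(u) \geq n+1$ and appending blocks of $K$ letters of $v$, after at most one further block of length $K$ we reach firmness $\geq n+2 > n+1$; so it suffices that $v$ has at least $K$ letters, i.e.\ that $\ell \geq d(n) + 1 + K$. Thus $d(n+1) := d(n) + K$ works.

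The main obstacle, and the only place requiring genuine care, is the bookkeeping in the inductive step: making sure the bound $k(\ow)$ from Lemma~\ref{le:ris-bound} is phrased in terms of $F^\#(\ow)$ alone (not $\ow$ itself), and managing the fact that a single application of that lemma raises the firmness past its current value by an unknown amount rather than by exactly one. Introducing $K = \max_{1 \leq m \leq n+1} g(m)$ and invoking the monotonicity $F^\#(wr) \geq F^\#(w)$ from Lemma~\ref{le:firmament}\eqref{it:Fwr} sidesteps both issues, reducing the whole argument to: ``firmness $\geq n+1$ plus $K$ more reduced-increasing letters $\Rightarrow$ firmness $\geq n+2$,'' which then telescopes down through the induction to give an explicit (if far from optimal) value $d(n) = \sum_{j=0}^{n-1} K_j$ where $K_j = \max_{1 \leq m \leq j+1} g(m)$, all depending only on $n$ and $(W,S)$ as required.
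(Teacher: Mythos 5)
Your proof is correct and takes essentially the same route as the paper: an induction on $n$ that iterates Lemma~\ref{le:ris-bound}, using the monotonicity $F^\#(wr)\geq F^\#(w)$ from Lemma~\ref{le:firmament}\eqref{it:Fwr} to absorb leftover letters. The only (harmless) difference is how the per-step bound is made uniform: you read off from the proof of Lemma~\ref{le:ris-bound} that $k(\ow)=f(F^\#(\ow))+1$ depends only on the firmness, whereas the paper obtains uniformity by taking the maximum of $k(\ow)$ over the finitely many elements of a given length.
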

\begin{proof}
This follows by induction on $n$ from Lemma~\ref{le:ris-bound} since there are only finitely many elements in $W$ of any given length.
%
%
\end{proof}

\section{Right-angled buildings}\label{se:RAB}

We will start by recalling the procedure of ``closing squares'' in right-angled buildings from~\cite{DMSS} and we define the square closure of a set of chambers.
Our goal in this section is to describe the square closure of a ball in the building
and to show that this is a bounded set, i.e., it has finite diameter; see Theorem~\ref{th:flex}.

\subsection{Preliminaries}\label{ss:RAB-pre}

We regard buildings as chamber systems, following the notation in \cite{Weiss}.
We briefly recall the basic notions and refer the reader to \textit{loc.\@~cit.\@} for more details.
Recall the notation from section~\ref{ss:poset}.

\begin{defn}
\begin{enumerate}[(i)]
\item 
    Let $\Delta$ be an edge-colored graph with color set $S$.
    Let $J \subseteq S$.
    A \textit{$J$-residue} of $\Delta$ is a connected component of the subgraph of $\Delta$ obtained from $\Delta$ by discarding all the edges whose color is not in $J$.
    A residue of $\Delta$ is a $J$-residue for some $J \subseteq S$.
    If $s \in S$ then an $\{s\}$-residue is called an \textit{$s$-panel}.
\item
    A chamber system is an edge-colored graph $\Delta$ with color set $S$ such that for each $s \in S$, all $s$-panels of $\Delta$ are complete graphs with at least two vertices. We will refer to the vertices of $\Delta$ as \textit{chambers} and we will denote the vertex set by $\ch(\Delta)$.
    The cardinality of $S$ is called the \textit{rank} of $\Delta$.
\item
    Two chambers $c_1, c_2$ are called \textit{$s$-adjacent} if they are connected by an edge with color $s$, and we denote this by $c_1 \adj{s} c_2$.

\item
    A chamber system is \textit{thin} if every panel contains exactly two chambers and is \textit{thick} if every panel contains at least three chambers.
\item
    A \textit{gallery} in a chamber system $\Delta$ is a sequence $(v_0, v_1, \dots, v_k)$ of chambers such that $v_{i-1}$ is adjacent to $v_{i}$ for all $i$.
    We then call this a gallery \textit{from $v_0$ to $v_k$}; the number $k$ is the \textit{length} of the gallery.
    If for each $i$, $v_{i-1}$ is $s_i$\dash adjacent to $v_{i}$, then the word $w = s_1 s_2 \dotsm s_k \in M_S$ is called the \textit{type} of the gallery.
\end{enumerate}
\end{defn}

\begin{defn}
Let $(W, S)$ be a Coxeter system.
A \textit{building of type $(W, S)$} is a pair~$(\Delta, \delta)$, where
$\Delta$ is a chamber system with index set $S$ and
$\delta$ is a map
\[ \delta \colon \ch(\Delta) \times \ch(\Delta) \to W \]
such that for each reduced word $w \in M_S$
and for each pair of chambers $c_1, c_2 \in \ch(\Delta)$, we have
\[ \delta(c_1, c_2) = \overline{w} \iff \text{there is a gallery in $\Delta$ of type $w$ from $c_1$ to $c_2$.} \]
We call the group $W$ the \textit{Weyl group} and the map $\delta$ the \textit{Weyl distance}.    
\end{defn}

\begin{rem}
    Notice that with our combinatorial setup, the basic objects are chambers, and panels contain chambers.
    There exist various other realizations of buildings (that are nevertheless equivalent) in which the containment is the other way around.
    We refer, for instance, to the introduction of \cite{AbramenkoBrown} for a good overview.    
\end{rem}

\begin{defn}\label{def:roots}
    Let $\Sigma = (W,S)$ be a Coxeter system.
    \begin{enumerate}[(i)]
        \item\label{it:C1}
            We define a thin building $\Delta_\Sigma$ of type $(W, S)$ by taking $\ch(\Delta_\Sigma) = W$ as the set of chambers,
            declaring $x \adj{s} y$ for $s \in S$ if and only if $x^{-1}y=s$,
            and defining a Weyl distance $\delta(x,y) := $ for all $x,y \in \ch(\Delta_\Sigma)$.
        \item
            Let $\Delta$ be an arbitrary building of type $(W,S)$.
            An \textit{apartment} in $\Delta$ is a subbuilding of $\Delta$ that is $\delta$-isometric to $\Delta_\Sigma$.
        \item
            Let $\Delta_\Sigma$ be as in \eqref{it:C1}.
            A \textit{reflection} of $\Delta_\Sigma$ is a non-trivial element $r \in W$ fixing an edge (i.e., a panel) of $\Delta_\Sigma$;
            such an element $r$ is always an involution of $W$.
            The set of edges fixed by $r$ is called the \textit{wall} of $r$.
            
            To each reflection $r$, we can associate a partition of the chamber set into two parts, as follows.
            Let $\{ x, y \}$ be a panel in the wall of $r$.
            Then each chamber of $\Delta_\Sigma$ is either nearer to $x$ than to $y$ or nearer to $y$ than to $x$,
            so we get two parts
            $\{ c \in W \mid \dist(c, x) < \dist(c, y) \}$
            and its complement
            $\{ c \in W \mid \dist(c, y) < \dist(c, x) \}$.
            These two parts are called the \textit{roots} associated to $r$ and they are interchanged by $r$.
            (They are independent of the choice of $\{ x,y \}$ in the wall of $r$. See \cite[Proposition~3.11]{Weiss}.)
            In particular, if $\alpha$ is a root, then its complement is again a root and is denoted by $-\alpha$.
        \item
            Let $\Delta$ be an arbitrary building of type $(W,S)$.
            A \textit{root} of $\Delta$ is then defined to be a root in one of its apartments.
            (Roots are also sometimes referred to as \textit{half-apartments}.)
    \end{enumerate}
\end{defn}

From now on, let $(W,S)$ be a right-angled Coxeter system with Coxeter diagram $\Sigma$
and let $\Delta$ be a right-angled building of type $(W,S)$.

\begin{defn}\label{galdist}
\begin{enumerate}[(i)]
    \item 
        Let $\delta \colon \Delta \times \Delta \to W$ be the Weyl distance of the building~$\Delta$.
        The \emph{gallery distance} between the chambers $c_1$ and $c_2$ is defined as
        \[ \dw(c_1, c_2) := l(\delta(c_1,c_2)) , \]
        i.e., the length of a minimal gallery between the chambers $c_1$ and $c_2$.
    \item 
        For a fixed chamber $c_0\in\ch(\Delta)$ we define the \emph{spheres} at a fixed gallery distance from  $c_0$ as
        \[ \Sp(c_0, n) := \{ c\in \ch(\Delta) \mid \dw(c_0, c) = n\} \]
        and the \emph{balls} as
        \[ \B(c_0, n) := \{ c\in \ch(\Delta) \mid \dw(c_0, c) \leq n\}. \]
\end{enumerate}
\end{defn}

\begin{defn}\phantomsection\label{def:par}
\begin{enumerate}[(i)]
    \item
        Let $c$ be a chamber in $\Delta$ and $\Rd$ be a residue in $\Delta$.
        The \emph{projection} of $c$ on $\Rd$ is the unique chamber in $\Rd$ that is closest to $c$ and it is denoted by $\proj_{\Rd}(c)$.
    \item
        If $\Rd_1$ and $\Rd_2$ are two residues, then the set of chambers
        \[ \proj_{\Rd_1}(\Rd_2) := \{ \proj_{\Rd_1}(c) \mid c\in \ch(\Rd_2) \} \]
        is again a residue and the rank of $\proj_{\Rd_1}(\Rd_2)$ is bounded above by the ranks of both $\Rd_1$ and $\Rd_2$; see~\cite[Section 2]{Caprace}.
    \item\label{it:par}
        The residues $\Rd_1$ and $\Rd_2$ are called \emph{parallel} if \mbox{$\proj_{\Rd_1}(\Rd_2)=\Rd_1$} and $\proj_{\Rd_2}(\Rd_1)=\Rd_2$.
\end{enumerate}
\end{defn}

In particular, if $\Pa_1$ and $\Pa_2$ are two parallel panels, then the chamber sets of $\Pa_1$ and $\Pa_2$ are mutually in bijection under the respective projection maps (see again~\cite[Section 2]{Caprace}).

\begin{defn}\label{perp}
Let $J\subseteq S$. We define the set
\[ J^\perp = \{ t\in S\setminus J \mid ts=st \text{ for all } s\in J \}. \]
If $J=\{s\}$, then we write the set $J^\perp$ as $s^\perp$.
\end{defn}

\begin{prop}[{\cite[Proposition 2.8]{Caprace}}]\label{Proposition2.8}
Let $\Delta$ be a right-angled building of type $(W, S)$.
\begin{enumerate}[\rm (i)]
\item Any two parallel residues have the same type.
\item Let $J\subseteq S$. Given a residue $\Rd$ of type $J$, a residue $\Rd'$ is parallel to $\Rd$ if and only if $\Rd'$ is of type $J$, and $\Rd$ and $\Rd'$ are both contained in a common residue of type $J\cup J^\perp$.
\end{enumerate}
\end{prop}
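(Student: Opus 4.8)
The plan is to translate parallelism into a condition on the ``gate element'' of the two residues in the Weyl group $W$, and then to feed in two features special to right-angled Coxeter systems: no two distinct generators of $W$ are conjugate, and the centralizer of a standard parabolic subgroup $W_J$ is contained in $W_{J\cup J^\perp}$.

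First I would fix, for residues $\Rd_1,\Rd_2$ of types $J_1,J_2$, a gate pair --- chambers $x\in\Rd_1$ and $y\in\Rd_2$ with $\dw(x,y)$ minimal --- and put $w:=\delta(x,y)$. Then $y=\proj_{\Rd_2}(x)$, $x=\proj_{\Rd_1}(y)$ and $w$ has minimal length in the double coset $W_{J_1}wW_{J_2}$; in particular $l(uw)=l(u)+l(w)$ for all $u\in W_{J_1}$ and $l(wt)=l(w)+1$ for all $t\in J_2$. Using the standard description of parallelism in terms of the gate element (see~\cite[Section~2]{Caprace}), namely that $\Rd_1$ and $\Rd_2$ are parallel if and only if $wW_{J_2}w^{-1}=W_{J_1}$, I would argue as follows: for $t\in J_2$ the element $u:=wtw^{-1}$ lies in $W_{J_1}$ and satisfies $l(u)+l(w)=l(uw)=l(wt)=l(w)+1$, so $l(u)=1$, i.e.\ $u\in J_1$; the symmetric argument with $w^{-1}$ shows that $t\mapsto wtw^{-1}$ is a bijection $J_2\to J_1$. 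Now the right-angled hypothesis enters for the first time: the Coxeter diagram of $W$ has no edge of odd label, so distinct generators are never conjugate in $W$, whence $wtw^{-1}=t$ for every $t\in J_2$. Therefore $J_1=J_2$, which proves part~(i), and moreover $w$ centralizes $W_J$, where $J:=J_1=J_2$.

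For part~(ii), the ``if'' direction is a direct-product computation. If $\Rd'$ has type $J$ and $\Rd,\Rd'$ are both contained in a residue $\Rd''$ of type $J\cup J^\perp$, then, since every generator of $J^\perp$ commutes with every generator of $J$, we have $W_{J\cup J^\perp}=W_J\times W_{J^\perp}$, so the residue $\Rd''$ decomposes as a direct product of buildings $\Rd_J\times\Rd_{J^\perp}$, whose $J$-residues are exactly the ``fibres'' $\Rd_J\times\{z\}$. Writing $\Rd=\Rd_J\times\{z\}$, $\Rd'=\Rd_J\times\{z'\}$ and using that $\dw$ on $\Rd''$ is the sum of the gallery distances of the two factors, one gets $\proj_{\Rd'}\bigl((a,z)\bigr)=(a,z')$ for every $a$, hence $\proj_{\Rd'}(\Rd)=\Rd'$, and symmetrically $\proj_{\Rd}(\Rd')=\Rd$, so $\Rd$ and $\Rd'$ are parallel. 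For the ``only if'' direction, let $\Rd'$ be parallel to $\Rd$; by part~(i) it has type $J$, and by the setup above the gate element $w$ (for $x\in\Rd$, $y\in\Rd'$) centralizes $W_J$. Here I would invoke the second right-angled fact: a short argument with Tits' solution of the word problem shows that, in a right-angled Coxeter group, an element commuting with a generator $s$ uses in any reduced expression only letters in $\{s\}\cup s^\perp$; intersecting over $s\in J$ gives $w\in W_{K}$ with $K=\{t\in S:|ts|\le 2\text{ for all }s\in J\}\subseteq J\cup J^\perp$. Hence $x$ and $y$ lie in a common residue $\Rd''$ of type $J\cup J^\perp$, which then contains both the $J$-residue $\Rd$ through $x$ and the $J$-residue $\Rd'$ through $y$.

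The step I expect to be the main obstacle is the input used in the second paragraph: converting the purely geometric notion of parallel residues into the algebraic identity $wW_{J_2}w^{-1}=W_{J_1}$ for the gate element, and more generally controlling the types of the projection residues $\proj_{\Rd_1}(\Rd_2)$ and $\proj_{\Rd_2}(\Rd_1)$ as genuine subsets of $S$ --- this is the geometric face of Kilmoyer's lemma on intersections of parabolic subgroups, and it is where the combinatorics of $W$ really interacts with the geometry of $\Delta$. By contrast, the two right-angled ingredients are cheap: non-conjugacy of distinct generators is immediate from the classification of conjugacy classes of reflections, and the containment of the centralizer of $W_J$ in $W_{J\cup J^\perp}$ needs only the short word-combinatorial argument sketched above.
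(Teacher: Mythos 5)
The paper never proves this proposition at all --- it is imported verbatim from \cite[Proposition~2.8]{Caprace} --- so your argument can only be compared with Caprace's original, and it is a correct reconstruction in essentially the same spirit: translate parallelism into a condition on the minimal (gate) element $w$ of the double coset $W_{J_1}\delta(x,y)W_{J_2}$, then feed in two right-angled facts. Two remarks on the inputs. First, the equivalence ``parallel $\iff wW_{J_2}w^{-1}=W_{J_1}$'' is not stated in that form in \cite{Caprace}; what is standard (Tits' local approach, Kilmoyer) is that $\proj_{\Rd_1}(\Rd_2)$ is the residue of type $J_1\cap wJ_2w^{-1}$ through the gate chamber, from which the equivalence follows at once, and your length computation $l(wtw^{-1})=1$ is exactly the usual refinement from subgroup equality to equality of generating sets --- so you have correctly identified where the genuine building-theoretic content sits, but you should cite the type formula rather than the equivalence itself. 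Second, both right-angled ingredients are true and cheap, as you say: distinct generators are never conjugate (the map $W\to\Z/2$ sending $s\mapsto 1$ and every other generator to $0$ respects all relations, since these are commutations and squares, and separates $s$ from $t$), and the centralizer fact you only sketch, $C_W(s)=\langle s\rangle\times W_{s^\perp}$, is a known property of graph products which can indeed be verified with the poset machinery of Section~\ref{ss:poset}: if $ws=sw$ with $l(ws)>l(w)$, then since $l(s\cdot ws)<l(ws)$ some occurrence of the letter $s$ in the reduced word $ws$ satisfies $I_{ws}(i)=\emptyset$; it cannot be an occurrence inside $w$ (appending a letter only adds precedence constraints, so $I_w(i)\subseteq I_{ws}(i)$, and Observation~\ref{obs:Iposet}\eqref{it:Iw} would then give a reduced expression of $w$ beginning with $s$, contradicting $l(sw)>l(w)$), hence it is the final letter, and Observation~\ref{obsrep2}\eqref{obsrep:3} together with reducedness forces every letter of $w$ into $s^\perp$; the case $l(ws)<l(w)$ follows by stripping the trailing $s$ and inducting. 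Finally, in the ``if'' direction of (ii) you should say explicitly that the projection onto $\Rd'$ computed inside the product residue $\Rd''\cong\Rd_J\times\Rd_{J^\perp}$ agrees with the projection computed in $\Delta$, which holds because residues are convex. With these small points made explicit, your proof is complete and close to Caprace's original argument.
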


\begin{prop}[{\cite[Corollary 2.9]{Caprace}}]
Let $\Delta$ be a right-angled building. Parallelism of residues of $\Delta$ is an equivalence relation.
\end{prop}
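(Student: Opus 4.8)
The plan is to derive the statement from Caprace's characterization of parallel residues in Proposition~\ref{Proposition2.8}, rather than arguing directly with projection maps.

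First I would dispose of reflexivity and symmetry. Reflexivity is immediate: for any residue $\Rd$ and any $c \in \ch(\Rd)$ the chamber of $\Rd$ nearest to $c$ is $c$ itself, so $\proj_\Rd(\Rd) = \Rd$. Symmetry is built into the formulation, since the condition ``$\proj_{\Rd_1}(\Rd_2) = \Rd_1$ and $\proj_{\Rd_2}(\Rd_1) = \Rd_2$'' in Definition~\ref{def:par}\eqref{it:par} is symmetric in $\Rd_1$ and $\Rd_2$.

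The real content is transitivity. Suppose $\Rd_1$ is parallel to $\Rd_2$ and $\Rd_2$ is parallel to $\Rd_3$. By Proposition~\ref{Proposition2.8}(i) all three residues share a common type $J \subseteq S$, so that $J^\perp$ is unambiguous. By Proposition~\ref{Proposition2.8}(ii) applied to the pair $(\Rd_1, \Rd_2)$ there is a residue $\mathcal{T}$ of type $J \cup J^\perp$ with $\Rd_1, \Rd_2 \subseteq \mathcal{T}$, and applied to $(\Rd_2, \Rd_3)$ there is a residue $\mathcal{T}'$ of type $J \cup J^\perp$ with $\Rd_2, \Rd_3 \subseteq \mathcal{T}'$. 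Since $\Rd_2$ is contained in both, $\mathcal{T}$ and $\mathcal{T}'$ are residues of the same type sharing a chamber, hence equal by uniqueness of residues of a prescribed type through a prescribed chamber. Thus $\Rd_1$ and $\Rd_3$ both lie in $\mathcal{T} = \mathcal{T}'$ and are both of type $J$, so the ``if'' direction of Proposition~\ref{Proposition2.8}(ii) (with $\Rd = \Rd_1$, $\Rd' = \Rd_3$) gives that $\Rd_1$ is parallel to $\Rd_3$.

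I expect no serious obstacle here, precisely because Proposition~\ref{Proposition2.8} does the heavy lifting; the only point requiring a moment's care is the identification $\mathcal{T} = \mathcal{T}'$, which rests on the standard fact that a residue of a given type through a given chamber is unique. A self-contained proof bypassing Proposition~\ref{Proposition2.8} — composing the mutually inverse bijections between $\ch(\Rd_1)$ and $\ch(\Rd_2)$ and between $\ch(\Rd_2)$ and $\ch(\Rd_3)$ furnished by the relevant projections, and checking the composite agrees with $\proj_{\Rd_3}$ restricted to $\ch(\Rd_1)$ — would instead hinge on a gate-type transitivity of projections onto parallel residues, which is the genuinely nontrivial input that Proposition~\ref{Proposition2.8} already packages.
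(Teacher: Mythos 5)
Your proof is correct: reflexivity and symmetry are immediate from Definition~\ref{def:par}, and your transitivity argument via Proposition~\ref{Proposition2.8} (equality of types, uniqueness of the residue of type $J\cup J^\perp$ through a chamber of $\Rd_2$, then the ``if'' direction of part (ii)) is sound. The paper itself gives no proof but simply cites \cite[Corollary 2.9]{Caprace}, and there the result is deduced from the same characterization of parallelism (Proposition 2.8 of \loccit), so your route is essentially the intended one.
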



Another very important notion in right-angled buildings is that of \emph{wings}, introduced in \cite[Section 3]{Caprace}.
For our purposes, it will be sufficient to consider wings with respect to panels.

\begin{defn}\label{def:wings}
Let $c\in \ch(\Delta)$ and $s\in S$.
Denote the unique $s$-panel containing $c$ by $\Pa_{s,c}$.
Then the set of chambers
\[ X_s(c)=\{x\in \ch(\Delta) \mid \proj_{\Pa_{s,c}}(x)=c\} \]
is called the \emph{$s$-wing} of $c$.
\end{defn}

Notice that if $\Pa$ is any $s$-panel, then the set of $s$-wings of each of the different chambers of $\Pa$ forms a partition of $\ch(\Delta)$
into equally many combinatorially convex subsets (see \cite[Proposition 3.2]{Caprace}).

\subsection{Sets of chambers closed under squares}\label{ss:squares}

We start by presenting two results proved in~\cite[Lemmas 2.9 and~2.10]{DMSS} that can be used in right-angled buildings to modify minimal galleries using the commutation relations of the Coxeter group.
We will refer to these results as the ``Closing Squares Lemmas'' (see also Figure~\ref{fig:CS} below).

\begin{lem}[Closing Squares 1]\label{Squares}
Let $c_0$ be a fixed chamber in $\Delta$.
Let $c_1, c_2 \in \Sp(c_0, n)$ and $c_3 \in \Sp(c_0, n+1)$ such that
\[
c_1 \adj{t} c_3 \quad \text{and} \quad c_2 \adj{s} c_3
\]
for some $s \neq t$.
Then $|st|=2$ in $\Sigma$ and there exists $c_4\in \Sp(c_0, n-1)$ such that
\[ c_1 \adj{s} c_4 \quad \text{and} \quad c_2 \adj{t} c_4. \]
\end{lem}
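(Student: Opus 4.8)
The plan is to work inside a single apartment and exploit the defining property of a building together with the combinatorics of reduced words in the right-angled Coxeter system $\Sigma$. First I would fix a minimal gallery from $c_0$ to $c_3$. Since $c_3 \in \Sp(c_0,n+1)$ and $c_1 \adj{t} c_3$ with $c_1 \in \Sp(c_0,n)$, the type of $\delta(c_0,c_3)$ is a reduced word $w t$ of length $n+1$, where $w$ is a reduced word of length $n$ with $\overline{w} = \delta(c_0,c_1)$. Similarly, since $c_2 \adj{s} c_3$ with $c_2 \in \Sp(c_0,n)$, the element $\delta(c_0,c_3)$ also admits a reduced representation of the form $w' s$ with $\overline{w'} = \delta(c_0,c_2)$ of length $n$. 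Thus $\overline{wt} = \overline{w's}$, and both $wt$ and $w's$ are reduced words of length $n+1$ representing the same element of $W$.

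The key step is then purely Coxeter-theoretic: I want to conclude that $|st| = 2$ and that the common element $\overline{wt}$ has a reduced representation ending in $st$ (equivalently $ts$). By Tits' theorem (recalled after Definition~\ref{def:operationsRAB}), the reduced word $w's$ can be transformed into $wt$ by a sequence of elementary operations of type (2), i.e.\ by $w's$-elementary transpositions. Since $wt$ ends in $t$ but $w's$ ends in $s$ with $s \neq t$, somewhere along this sequence the final letter $s$ of $w's$ must be swapped past an $s'$ with $|ss'| = 2$; tracking the last letter, one sees that in $wt$ the letters $s$ and $t$ can be brought to the final two positions and that they must commute, so $|st| = 2$ and $\overline{wt}$ is represented by a reduced word $u\,s\,t$ of length $n+1$ with $u$ reduced of length $n-1$. (Here I would phrase this cleanly using Observation~\ref{obsrep2}\eqref{obsrep:5} and Lemma~\ref{le:reduced}, applied to positions of $s$ and $t$ in a reduced representation of $\overline{wt}$: since $t$ can occupy the last position and $s$ the last position, the index of $s$ does not dominate the index of $t$ in the poset $\prec$, forcing $|st| = 2$ and allowing the exchange.)

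Once we have a reduced word $u\,s\,t$ of length $n+1$ representing $\delta(c_0,c_3)$, with $u$ of length $n-1$, I would pull this back into the building. By the building axiom there is a gallery of type $u\,s\,t$ from $c_0$ to $c_3$; let $c_4$ be the chamber reached after the prefix $u$, so $\dw(c_0,c_4) = n-1$ (it is exactly $n-1$, not less, since $u$ is reduced), and let $c_4'$ be the chamber reached after $u\,s$. Then $c_4 \adj{s} c_4' \adj{t} c_3$, while also $c_1 \adj{t} c_3$. The remaining point is to identify $c_4' = c_1$ and to verify $c_4 \adj{t} c_2$; this is where I would use that $\Delta$ has the property that galleries of equal type between the same endpoints are forced in the thin case, and more generally that the projection $\proj_{\Pa}$ onto the $t$-panel $\Pa$ through $c_3$ is well-defined: both $c_1$ and $c_4'$ lie in $\Pa$, both are at distance $n$ from $c_0$ while $c_3$ is at distance $n+1$, so both equal $\proj_{\Pa}(c_0)$, hence $c_1 = c_4'$. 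Symmetrically, working with the reduced word $u\,t\,s$ (another reduced representation of $\delta(c_0,c_3)$, obtained since $|st|=2$), the chamber reached after $u\,t$ equals $c_2$ by the same projection argument, and the chamber reached after the prefix $u$ is again the unique chamber of the appropriate panel closest to $c_0$ — I would check it coincides with the $c_4$ above, e.g.\ because $\proj$ onto the rank-$2$ residue of type $\{s,t\}$ through $c_3$ is well-defined and $c_4$ is characterized as its chamber at distance $n-1$ from $c_0$. This yields $c_1 \adj{s} c_4$ and $c_2 \adj{t} c_4$ with $c_4 \in \Sp(c_0,n-1)$, as desired.

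The main obstacle I expect is the bookkeeping in the last paragraph: making the identification of $c_4'$ with $c_1$ (and of the two candidate chambers $c_4$) fully rigorous, rather than hand-waving "it's the same gallery". The clean way is to phrase everything in terms of projections onto the rank-$2$ residue $\Rd$ of type $\{s,t\}$ containing $c_3$ (which, because $|st|=2$, is a "square" — a $2\times 2$ grid of four chambers in the thin case, and in general a product of two panels): one shows $\proj_{\Rd}(c_0)$ is at distance $n-1$ from $c_0$ and is the unique such chamber, then $c_1$, $c_2$, $c_3$ are the three other chambers of this square adjacent appropriately, and $c_4 := \proj_{\Rd}(c_0)$ does the job. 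The Coxeter-combinatorial input ($|st|=2$) from the middle paragraph is what guarantees $\Rd$ really is a square and not a rank-$1$ or infinite residue; everything else is then a short projection argument.
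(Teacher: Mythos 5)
The paper itself does not prove this lemma --- it is quoted from \cite{DMSS} (Lemmas 2.9 and 2.10 there) --- so there is no in-text proof to compare against; judged on its own, your plan is correct and follows the standard route: observe that $\delta(c_0,c_3)$ has both $s$ and $t$ as right descents, deduce $|st|=2$ from the right-angled word combinatorics (Observation~\ref{obsrep2} and Lemma~\ref{le:reduced}), and then locate $c_4$ via the gate property of projections (Definition~\ref{def:par}). One small caveat: Lemma~\ref{le:reduced} only lets you bring the two letters together, not necessarily into the final two positions, so the claim that $\delta(c_0,c_3)$ has a reduced representation ending in $st$ is best justified by the standard coset decomposition of $\delta(c_0,c_3)$ with respect to $W_{\{s,t\}}$ (both letters being descents forces the $W_{\{s,t\}}$-part to be the longest element $st$). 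Your closing formulation, taking $c_4:=\proj_{\Rd}(c_0)$ for the $\{s,t\}$-residue $\Rd$ containing $c_3$, needs only $|st|=2$ plus the gate property and cleanly settles the bookkeeping you were worried about, so the proposal is sound as stated.
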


\begin{lem}[Closing Squares 2]\label{Squares2}
Let $c_0$ be a fixed chamber in $\Delta$.
Let $c_1, c_2 \in \Sp(c_0, n)$ and $c_3\in \Sp(c_0, n-1)$ such that
\[ c_1 \adj{s} c_2 \quad \text{and} \quad c_2 \adj{t} c_3 \]
for some $s \neq t$.
Then $|st|=2$ in $\Sigma$ and there exists $c_4\in \Sp(c_0, n-1)$ such that
\[ c_1 \adj{t} c_4 \quad \text{and} \quad c_3 \adj{s} c_4. \]
\end{lem}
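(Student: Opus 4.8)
The plan is to reduce this to Closing Squares 1 (Lemma~\ref{Squares}) by examining the galleries involved and using the building axioms together with the behaviour of reduced words in the right-angled Coxeter system $\Sigma$. First I would observe that since $c_2 \adj{t} c_3$ with $c_2 \in \Sp(c_0,n)$ and $c_3 \in \Sp(c_0,n-1)$, the chamber $c_2$ is ``further'' from $c_0$ than $c_3$ along the $t$-panel through them; in particular $\proj_{\Pa_{t,c_2}}(c_0) = c_3$, and a minimal gallery from $c_0$ to $c_2$ of type $w t$ (with $w$ a reduced word representing $\delta(c_0,c_3)$) can be chosen so that it passes through $c_3$. Similarly, from $c_1 \adj{s} c_2$ with $c_1, c_2$ both in $\Sp(c_0,n)$, I need to understand how the final letter of a minimal gallery to $c_1$ relates to $s$ and $t$. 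The key point is to show $|st| = 2$: if instead $|st| = \infty$, then appending first $t$ then $s$ (reading outward from $c_0$) could not both decrease and then keep the distance constant in the thin apartment, contradicting $\dw(c_0,c_1) = \dw(c_0,c_2) = n$ and $\dw(c_0,c_3) = n-1$. Concretely, $\delta(c_0,c_3) = \delta(c_0,c_2) t$ has length $n-1$, while $\delta(c_0,c_1) = \delta(c_0,c_2) s$ also has length $n$; writing $u := \delta(c_0,c_2)$, firmness/length considerations force $l(ut) = n-1 < n = l(u)$ and $l(us) = n$, so $us \ne ut$ and the element $ut$ has both $s$ and $t$ removable only if $s$ and $t$ commute — I would make this precise via the standard fact that in a Coxeter group, if $l(us) > l(u)$ and $l(ut) < l(u)$ then one cannot have $|st| = \infty$ unless... — actually the cleanest route is: $l(ut) < l(u)$ means $u = u' t$ reduced with $u'$ of length $n-1$; then $us = u' t s$ and since $l(us) = l(u'ts) = n$ while $l(u') = n-1$, we need $l(u'ts) = l(u') + 2$, which together with $l(u't) = l(u')+1$ is automatic, but to also have $\delta(c_0,c_1)$ realized we may run into $u't s$ versus $u' s t$; the equality $\dw$-constraints do not yet force commutation, so I would instead argue in the building directly.

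The better approach: apply Closing Squares 1 to a suitably chosen configuration. Consider the $s$-panel $\Pa_{s,c_1}$ containing $c_1$ and $c_2$. Since $c_2 \adj{t} c_3$ and $c_3$ is strictly closer to $c_0$, we have $\proj_{\Pa_{s,c_1}}(c_0) \ne c_2$ (otherwise $c_2$ would be the gate and $\dw(c_0,c_1) = \dw(c_0,c_2)+1$, contradicting both being at distance $n$) — wait, in fact both $c_1,c_2$ are at distance $n$, so neither is the gate of $\Pa_{s,c_1}$ toward $c_0$; hence there is a third chamber, or the gate lies outside. Rather than this, I would take the minimal gallery $\gamma$ from $c_0$ to $c_3$ (length $n-1$), extend it by the edge $c_3 \adj{t} c_2$ to a minimal gallery of type $wt$ from $c_0$ to $c_2$, and then note $c_1 \adj{s} c_2$ with $s \ne t$. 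Now let $c_1' := \proj_{\Pa_{s,c_2}}(c_0)$; since $\dw(c_0,c_1) = n = \dw(c_0,c_2)$, the gate $c_1'$ of the $s$-panel must equal neither $c_1$ nor $c_2$ would give distance $n-1$ for exactly one of them — contradiction unless I set this up via a $t$-panel. Let me instead directly invoke Closing Squares 1 with the roles: set $d_1 := c_1$, $d_2 := c_3$, and seek a common neighbour. We have $c_1 \in \Sp(c_0,n)$, $c_3 \in \Sp(c_0,n-1)$, and a path $c_1 \adj{s} c_2 \adj{t} c_3$; I want $c_1 \adj{t} c_4 \adj{s} c_3$ with $c_4 \in \Sp(c_0,n-1)$. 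Consider the $\{s,t\}$-residue $\Rd$ containing $c_1,c_2,c_3$. Its projection $\proj_{\Rd}(c_0)$ is a single chamber $p$ (rank-$2$ residue, right-angled so $\Rd$ is a product of an $s$-panel and a $t$-panel when $|st|=2$, or an infinite tree-like residue when $|st| = \infty$). If $|st| = \infty$, $\Rd$ is (the chamber system of) an apartment of a tree, and within it the only way to have $c_3$ at distance $n-1$, $c_2$ at distance $n$, $c_1$ at distance $n$ with $c_1 \adj{s} c_2 \adj{t} c_3$ is to have the gate $p$ satisfy $\dw(p, c_3) = 0$... but then $\dw(p,c_2) = 1$ forces the gate to be $c_3$, and $\dw(c_0,c_1) = \dw(c_0,p) + \dw(p,c_1) = (n-1) + \dw(p,c_1)$; since $c_1 \adj{s} c_2$ and $c_2$ is at tree-distance $1$ from $p = c_3$, $c_1$ is at tree-distance $2$ from $p$, giving $\dw(c_0,c_1) = n+1 \ne n$, a contradiction. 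Hence $|st| = 2$.

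Having established $|st| = 2$, the residue $\Rd$ is a direct product of the $s$-panel $\Pa_s$ and the $t$-panel $\Pa_t$ through $c_2$, and projection to $\Rd$ commutes with projection to each factor. The gate $p = \proj_{\Rd}(c_0)$ decomposes as $p = (\proj_{\Pa_{t,c_2}}(c_0),\, \proj_{\Pa_{s,c_2}}(c_0))$ in product coordinates; from $c_2 \adj{t} c_3$ with $c_3$ strictly closer, $\proj_{\Pa_{t,c_2}}(c_0) = c_3$, i.e., the $t$-coordinate of the gate is ``$c_3$-side''. Then define $c_4$ to be the chamber of $\Rd$ with the same $t$-coordinate as $c_3$ (the gate side) and the same $s$-coordinate as $c_1$; concretely $c_4 := \proj_{\Pa_{t,c_1}}(c_3)$, which satisfies $c_1 \adj{t} c_4$ and $c_3 \adj{s} c_4$ since $|st|=2$ makes the square commute. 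Finally, compute $\dw(c_0,c_4) = \dw(c_0,p) + \dw(p,c_4)$; since $c_4$ and $c_3$ share the gate-side $t$-coordinate, $\dw(p,c_4) = \dw(p,c_3) - [\text{adjust for }s\text{-coordinate}]$... the cleanest: $\dw(c_0,c_4) = \dw(c_0, c_1) - 1 = n - 1$ because $c_1 \adj{t} c_4$ and $c_4$ lies on the $c_0$-side of the $t$-wall separating $c_1$ from $c_4$ (as the gate's $t$-coordinate is $c_3 \ne c_1$). This gives $c_4 \in \Sp(c_0, n-1)$ as required.

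\medskip

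\textbf{Main obstacle.} The delicate point is the case analysis showing $|st| = 2$ and then correctly identifying $c_4$ with the right gallery distance — i.e., making rigorous the product-decomposition argument and tracking which ``side'' of each wall the gate lies on. I expect the cleanest writeup will avoid ad hoc tree/product arguments by instead translating everything into reduced words: realize $\delta(c_0,c_2) = ut$ reduced with $l(ut) = n-1$ so $\delta(c_0,c_3)$ corresponds to $u$ (length $n-1$)—wait, that is backwards; rather $\delta(c_0, c_3) = \delta(c_0,c_2)t$ has length $n-1 = n - 1$, so $\delta(c_0,c_2) = vt$ reduced for some word $v$ of length $n-1$ with $\overline{v} = \delta(c_0,c_3)$. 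Then from $c_1 \adj{s} c_2$ and $l(\delta(c_0,c_1)) = n = l(\delta(c_0,c_2))$, we get $\delta(c_0,c_1) = \delta(c_0,c_2)\cdot s = vts$ with $l(vts) = n$; since $l(v) = n-1$, $l(vt) = n$, this forces $l(vts) = l(vt) = n$, impossible for a reduced-word extension unless $ts$ admits a cancellation at the end of $vt$ — but $v t$ already has $t$ as last letter and $l(vts) < l(vt)+1$ means $s$ can be cancelled, i.e. $l(vt \cdot s) = l(vt) - 1$, so $v t = v' s$... then $\delta(c_0,c_1) = v' $, length $n-1$? That contradicts $c_1 \in \Sp(c_0,n)$. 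This tension is exactly what forces the commutation $|st|=2$ via the deletion condition, and resolving these length bookkeeping issues carefully (perhaps by applying Lemma~\ref{le:reduced} or Observation~\ref{obsrep2}) is where the real work lies; everything else follows from the Closing Squares 1 Lemma and standard projection properties from \cite[Section 2]{Caprace}.
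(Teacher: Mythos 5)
Your case $|st|=2$ is essentially fine: once commutation is known, viewing the $\{s,t\}$-residue $\Rd$ as a product of an $s$-panel and a $t$-panel, noting that the gate $p=\proj_{\Rd}(c_0)$ has the same $t$-coordinate as $c_3$ (because $\proj_{\Pa_{t,c_2}}(c_0)=c_3$), and hence that $\proj_{\Pa_{t,c_1}}(c_0)=c_4$, does give $\dw(c_0,c_4)=\dw(c_0,c_1)-1=n-1$ via the gate property; this can be made rigorous with the projection identities from \cite[Section~2]{Caprace}. (For the record, the paper does not prove this lemma at all: it is quoted from \cite[Lemma~2.10]{DMSS}, where the argument is run directly on Weyl distances; your residue-theoretic route is a legitimate alternative.)

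The genuine gap is the step that is supposed to rule out $|st|=\infty$, which is part of the conclusion. Two things go wrong there. First, the $\{s,t\}$-residue is \emph{not} ``(the chamber system of) an apartment of a tree'': $\Delta$ need not be thin, so this residue is a possibly thick rank-$2$ building of infinite dihedral type, and in such a residue two distinct adjacent chambers can perfectly well be equidistant from a fixed chamber (namely when the gate of their panel is a third chamber of that panel) -- so you cannot argue as on a line. Second, you assert that ``the only way'' to realize the distance pattern is that the gate $p=\proj_{\Rd}(c_0)$ equals $c_3$, and you only derive a contradiction in that sub-case; the case $\dw(p,c_3)\geq 1$ is never addressed, and nothing you wrote excludes it. The step is repairable: with $v:=\delta(p,c_3)$ the gate property gives $\delta(p,c_2)=vt$ with $l(vt)=l(v)+1$; if $|st|=\infty$, reduced words in $\langle s,t\rangle$ are unique, so the reduced word of $vt$ ends in $t$, hence $l(vts)=l(vt)+1$ and the building axiom forces $\delta(p,c_1)=vts$, i.e.\ $\dw(c_0,c_1)=n+1$, contradicting $c_1\in\Sp(c_0,n)$. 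Equivalently, and closer to \cite{DMSS}: put $u:=\delta(c_0,c_2)$; then $u=\delta(c_0,c_3)t$ with $l(ut)<l(u)$, and also $l(us)<l(u)$ (otherwise the building axiom would force $\delta(c_0,c_1)=us$ of length $n+1$), so $s$ and $t$ are two distinct right descents of $u$; the parabolic subgroup they generate must then be finite, giving $|st|=2$, and moreover $\delta(c_0,c_1)=u=\delta(c_0,c_2)$, which is exactly the resolution of the length bookkeeping you leave hanging in your ``main obstacle'' paragraph.
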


\begin{figure}[ht!]
\centering
\begin{subfigure}[b]{0.3\textwidth}
\scalebox{0.7}{
\definecolor{ttqqqq}{rgb}{0.2,0.,0.}
\begin{tikzpicture}[scale=0.8,line cap=round,line join=round,>=triangle 45,x=1.0cm,y=1.0cm]
\clip(11.5,-2) rectangle (18,4.28);
\draw [shift={(14.0370247934,-10.3016528926)}] plot[domain=1.02538367692:2.14734467758,variable=\t]({1.*12.4897881927*cos(\t r)+0.*12.4897881927*sin(\t r)},{0.*12.4897881927*cos(\t r)+1.*12.4897881927*sin(\t r)});
\draw [shift={(14.0370247934,-10.3016528926)}] plot[domain=0.997988156797:2.15611076477,variable=\t]({1.*14.1808741908*cos(\t r)+0.*14.1808741908*sin(\t r)},{0.*14.1808741908*cos(\t r)+1.*14.1808741908*sin(\t r)});
\draw [shift={(14.0370247934,-10.3016528926)}] plot[domain=1.06235112263:2.14028252827,variable=\t]({1.*10.7291062881*cos(\t r)+0.*10.7291062881*sin(\t r)},{0.*10.7291062881*cos(\t r)+1.*10.7291062881*sin(\t r)});
\draw (16.9,2.35394050274) node[anchor=north west] {$n$};
\draw (16.8,0.55) node[anchor=north west] {$n-1$};
\draw (16.8,4.13212405444) node[anchor=north west] {$n+1$};
\draw (15.5471274469,2.6) node[anchor=north west] {$c_2$};
\draw (11.8,2.65) node[anchor=north west] {$c_1$};
\draw (14.1,-1.3) node[anchor=north west] {$c_0$};
\draw (13.71,-0.17) node[anchor=north west] {$\vdots$};
\draw (13.6247668505,0.3) node[anchor=north west] {$c_4$};
\draw (14.0092389698,4.4) node[anchor=north west] {$c_3$};
\draw [line width=1.2pt] (13.9701176712,3.90429032555)-- (12.3332286877,2.07137777611);
\draw [line width=1.2pt] (13.9701176712,3.90429032555)-- (15.5781703153,2.09268773443);
\draw [line width=1.2pt,dash pattern=on 2pt off 2pt] (12.3332286877,2.07137777611)-- (13.9068569939,0.377507567897);
\draw [line width=1.2pt,dash pattern=on 2pt off 2pt] (15.5781703153,2.09268773443)-- (13.9068569939,0.377507567897);
\draw (12.6436849104,3.4) node[anchor=north west] {$t$};
\draw (14.8582815665,3.3) node[anchor=north west] {$s$};
\draw (12.55,1.45) node[anchor=north west] {$s$};
\draw (14.9223602531,1.52) node[anchor=north west] {$t$};
\begin{scriptsize}
\draw [fill=black] (12.3332286877,2.07137777611) circle (3.5pt);
\draw [fill=black] (15.5781703153,2.09268773443) circle (3.5pt);
\draw [fill=black] (13.9,-1.6) circle (3.5pt);
\draw [fill=black] (13.9068569939,0.377507567897) circle (3.5pt);
\draw [fill=black] (13.9701176712,3.90429032555) circle (3.5pt);
\end{scriptsize}
\end{tikzpicture}
}
\subcaption{\centering Lemma~\ref{Squares}}\label{fig:a}
\end{subfigure}
\hspace{2.5cm}
\begin{subfigure}[b]{0.3\textwidth}
\definecolor{ttqqqq}{rgb}{0.2,0.,0.}
\scalebox{0.7}{
\begin{tikzpicture}[scale=0.91,line cap=round,line join=round,>=triangle 45,x=1.0cm,y=1.0cm]
\clip(11.7,-2.4) rectangle (18.5,2.8);
\draw [shift={(14.0370247934,-10.3016528926)}] plot[domain=1.02538367692:2.14734467758,variable=\t]({1.*12.4897881927*cos(\t r)+0.*12.4897881927*sin(\t r)},{0.*12.4897881927*cos(\t r)+1.*12.4897881927*sin(\t r)});
\draw [shift={(14.0370247934,-10.3016528926)}] plot[domain=1.06235112263:2.14028252827,variable=\t]({1.*10.7291062881*cos(\t r)+0.*10.7291062881*sin(\t r)},{0.*10.7291062881*cos(\t r)+1.*10.7291062881*sin(\t r)});
\draw (17.1,2.2426021271) node[anchor=north west] {$n$};
\draw (17,0.453792447377) node[anchor=north west] {$n-1$};
\draw (14.9,2.7) node[anchor=north west] {$c_2$};
\draw (11.75,2.7) node[anchor=north west] {$c_1$};
\draw (14.5,-1.1) node[anchor=north west] {$c_0$};
\draw (14.2,-0.15) node[anchor=north west] {$\vdots$};
\draw (12.7186231146,0.31) node[anchor=north west] {$c_4$};
\draw (15.8,0.26) node[anchor=north west] {$c_3$};
\draw [line width=1.2pt,dash pattern=on 2pt off 2pt] (12.3332286877,2.07137777611)-- (13.1886752932,0.375450417608);
\draw (12.3,1.5) node[anchor=north west] {$t$};
\draw (14.2,0.3) node[anchor=north west] {$s$};
\draw (13.4,2.65) node[anchor=north west] {$s$};
\draw (15.35,1.6) node[anchor=north west] {$t$};
\draw [line width=1.2pt] (12.3332286877,2.07137777611)-- (14.8607567251,2.16094213425);
\draw [line width=1.2pt] (14.8607567251,2.16094213425)-- (15.7609052706,0.310165392801);
\draw [line width=1.2pt,dash pattern=on 2pt off 2pt] (15.7609052706,0.310165392801)-- (13.1886752932,0.375450417608);
\begin{scriptsize}
\draw [fill=black] (12.3332286877,2.07137777611) circle (3.5pt);
\draw [fill=black] (14.8607567251,2.16094213425) circle (3.5pt);
\draw [fill=black] (14.35,-1.25) circle (3.5pt);
\draw [fill=black] (13.1886752932,0.375450417608) circle (3.5pt);
\draw [fill=black] (15.7609052706,0.310165392801) circle (3.5pt);
\end{scriptsize}
\end{tikzpicture}
}
\subcaption{Lemma~\ref{Squares2}}\label{fig:b}
\end{subfigure}
\caption{Closing Squares Lemmas}\label{fig:CS}
\end{figure}
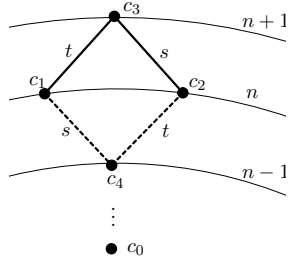
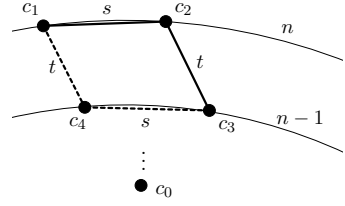

\begin{defn}\label{def:Ai}
Let $c_0$ be a fixed chamber of $\Delta$ and let $n\in \N$.
\begin{enumerate}[(i)]
    \item
        Let $c \in \ch(\Delta)$.
        Then we call $c$ \emph{firm with respect to $c_0$} if and only if $\delta(c_0, c) \in W$ is firm (as in Definition~\ref{def:formF}\eqref{it:firm}).
    \item
        We will create a partition of the sphere $\Sp(c_0, n)$ by defining
        \begin{align*}
            A_1(n) &= \{ c \in \Sp(c_0, n) \mid c \text{ is firm} \} , \\
            A_2(n) &= \{ c \in \Sp(c_0, n) \mid c \text{ is not firm} \} ,
        \end{align*}
        as in Figure~\ref{fig:S(n)}.
        Notice that this is equivalent to the definition given in \cite[Definition 4.3]{DMSS}.
        \begin{figure}[ht!]
            \captionsetup[subfigure]{justification=centering}
            \begin{subfigure}[b]{0.41\textwidth}
            \begin{tikzpicture}[scale =0.67, line cap=round,line join=round,>=triangle 45,x=1.0cm,y=1.0cm]
            \clip(3.1,2.2) rectangle (11.2,9.5);
            \draw [shift={(7.196048227047175,-8.159740084379619)}] plot[domain=0.90299902219246:2.132950643098445,variable=\t]({1.0*12.987535105233386*cos(\t r)+-0.0*12.987535105233386*sin(\t r)},{0.0*12.987535105233386*cos(\t r)+1.0*12.987535105233386*sin(\t r)});
            \draw [shift={(7.196048227047175,-8.159740084379619)}] plot[domain=0.8405516372429191:2.200323608157614,variable=\t]({1.0*14.924888309352937*cos(\t r)+-0.0*14.924888309352937*sin(\t r)},{0.0*14.924888309352937*cos(\t r)+1.0*14.924888309352937*sin(\t r)});
            \draw [line width=1.2pt](6.794841094694121,4.821596555066206)-- (6.460693486180265,6.747021618007139);
            \draw [line width=1.2pt](6.794841094694121,4.821596555066206)-- (7.939261914182559,6.746631857240333);
            \draw (6.583558053137442,4.472276888761895) node[anchor=north west] {$\vdots$};
            \draw (8.03581479852291,6.7983491334894675) node[anchor=north west] {$c_3$};
            \draw (6.571250792583328,6.847578175705924) node[anchor=north west] {$c_2$};
            \draw (5.414368300496599,6.7983491334894675) node[anchor=north west] {$c_1$};
            \draw (6.75,8.11522601277968) node[anchor=north west] {$t$};
            \draw (5.65,8.08) node[anchor=north west] {$t$};
            \draw (8.946552079527356,7.9429243650220815) node[anchor=north west] {$t$};
            \draw (5.5,5.862997331376793) node[anchor=north west] {$s$};
            \draw (6.1,6.022991718580276) node[anchor=north west] {$s$};
            \draw (6.8,5.95) node[anchor=north west] {$s$};
            \draw (6.878932306436181,2.9) node[anchor=north west] {$c_0$};
            \draw [shift={(7.196048227047175,-8.159740084379619)}] plot[domain=0.752893986406544:2.2097710988591883,variable=\t]({1.0*16.732763310877026*cos(\t r)+-0.0*16.732763310877026*sin(\t r)},{0.0*16.732763310877026*cos(\t r)+1.0*16.732763310877026*sin(\t r)});
            \draw [line width=1.2pt](5.279356698174269,6.641563393010703)-- (6.794841094694121,4.821596555066206);
            \draw [line width=1.2pt](3.4250688710404686,8.142562903677115)-- (5.279356698174269,6.641563393010703);
            \draw [line width=1.2pt](4.345950880774987,8.328507643459885)-- (5.279356698174269,6.641563393010703);
            \draw [line width=1.2pt](5.887125218266153,8.521749314799218)-- (6.460693486180265,6.747021618007139);
            \draw [line width=1.2pt](6.845058071938161,8.569341597794358)-- (6.460693486180265,6.747021618007139);
            \draw [line width=1.2pt](8.312548470057324,8.53573221277899)-- (7.939261914182559,6.746631857240333);
            \draw [line width=1.2pt](9.391698372060409,8.42834263962847)-- (7.939261914182559,6.746631857240333);
            \draw (7.7,8) node[anchor=north west] {$t$};
            \draw (4.7,8) node[anchor=north west] {$t$};
            \draw (3.5,7.8) node[anchor=north west] {$t$};
            \draw (9.783445797207118,7.056801605125864) node[anchor=north west] {$n$};
            \draw (9.598836888895406,5.173790740346401) node[anchor=north west] {$n-1$};
            \draw (9.7,9.1) node[anchor=north west] {$n+1$};
            \begin{scriptsize}
            \draw [fill=black] (7.196048227047175,-8.159740084379619) circle (3.5pt);
            \draw [fill=black] (5.279356698174269,6.641563393010703) circle (3.5pt);
            \draw [fill=black] (6.460693486180265,6.747021618007139) circle (3.5pt);
            \draw [fill=black] (7.939261914182559,6.746631857240333) circle (3.5pt);
            \draw [fill=black] (6.794841094694121,4.821596555066206) circle (3.5pt);
            \draw [fill=black] (6.9191121815025936,2.9434084491899646) circle (3.5pt);
            \draw [fill=black] (19.406165723102667,3.281343725747452) circle (3.5pt);
            \draw [fill=black] (-0.4659084580477993,2.1531227399789077) circle (3.5pt);
            \draw [fill=black] (3.4250688710404686,8.142562903677115) circle (3.5pt);
            \draw [fill=black] (4.345950880774987,8.328507643459885) circle (3.5pt);
            \draw [fill=black] (5.887125218266153,8.521749314799218) circle (3.5pt);
            \draw [fill=black] (6.845058071938161,8.569341597794358) circle (3.5pt);
            \draw [fill=black] (8.312548470057324,8.53573221277899) circle (3.5pt);
            \draw [fill=black] (9.391698372060409,8.42834263962847) circle (3.5pt);
            \end{scriptsize}
            \end{tikzpicture}
            \subcaption{$c_i$ firm: for all $t \neq s$, $l(\delta(c_0,c_i)t) > l(\delta(c_0, c_i))$.}
            \end{subfigure}
            \hspace{1.5cm}
            \begin{subfigure}[b]{0.41\textwidth}
            \begin{tikzpicture}[scale=0.70, line cap=round,line join=round,>=triangle 45,x=1.0cm,y=1.0cm]
            \clip(3.7,1.2) rectangle (10.3,8);
            \draw [shift={(7.5,-7.66)}] plot[domain=0.8281072191624526:2.264578884900933,variable=\t]({1.0*11.592980634849695*cos(\t r)+-0.0*11.592980634849695*sin(\t r)},{0.0*11.592980634849695*cos(\t r)+1.0*11.592980634849695*sin(\t r)});
            \draw [shift={(7.5,-7.66)}] plot[domain=0.8158107657899643:2.23901068745554,variable=\t]({1.0*14.417503251256784*cos(\t r)+-0.0*14.417503251256784*sin(\t r)},{0.0*14.417503251256784*cos(\t r)+1.0*14.417503251256784*sin(\t r)});
            \draw [line width=1.2pt](4.877468301282304,6.516978785665895)-- (4.13142200772546,3.432785146660134);
            \draw [line width=1.2pt](4.877468301282304,6.516978785665895)-- (5.810790384722948,3.8092532832637183);
            \draw [line width=1.2pt](5.810790384722948,3.8092532832637183)-- (6.622348908479946,6.730765391790445);
            \draw [line width=1.2pt](5.810790384722948,3.8092532832637183)-- (8.406499532206302,6.7289769823330285);
            \draw [line width=1.2pt](6.622348908479946,6.730765391790445)-- (7.510742996295576,3.9329756571826984);
            \draw [line width=1.2pt](8.406499532206302,6.7289769823330285)-- (8.811721489227729,3.858532316866512);
            \draw [line width=1.2pt](6.484130852902304,3.5) node[anchor=north west] {$\vdots$ };
            \draw (4.7,7.2) node[anchor=north west] {$c_1$};
            \draw[color=black] (6.76802401325785,7.026868238155123) node {$c_2$};
            \draw[color=black] (8.558734717038993,7.026868238155123) node {$c_3$};
            \draw[color=black] (4.3,5.1) node {$t$};
            \draw[color=black] (5.064665051124568,5.2361575343739775) node {$s$};
            \draw[color=black] (6.05,5.5) node {$s$};
            \draw[color=black] (9.6,6.9) node {$n$};
            \draw[color=black] (9.7,4.1) node {$n-1$};
            \draw[color=black] (7.8,5.75) node {$s$};
            \draw[color=black] (6.8,5.6) node {$t$};
            \draw[color=black] (8.35,5.5) node {$t$};
            \draw[color=black] (7.1,1.5) node {$c_0$};
            \begin{scriptsize}
            \draw [fill=black] (4.877468301282304,6.516978785665895) circle (3.5pt);
            \draw [fill=black] (6.622348908479946,6.730765391790445) circle (3.5pt);
            \draw [fill=black] (8.406499532206302,6.7289769823330285) circle (3.5pt);
            \draw [fill=black] (4.13142200772546,3.432785146660134) circle (3.5pt);
            \draw [fill=black] (5.810790384722948,3.8092532832637183) circle (3.5pt);
            \draw [fill=black] (8.811721489227729,3.858532316866512) circle (3.5pt);
            \draw [fill=black] (7.510742996295576,3.9329756571826984) circle (3.5pt);
            \draw [fill=black] (6.75,1.8) circle (3.5pt);
            \end{scriptsize}
            \end{tikzpicture}
            \subcaption{$c_i$ not firm: for some $t \neq s$, $l(\delta(c_0, c_i)t) < l(\delta(c_0, c_i))$.}
            \end{subfigure}
            \caption{Partition of $\Sp(c_0, n)$.}\label{fig:S(n)}
        \end{figure}
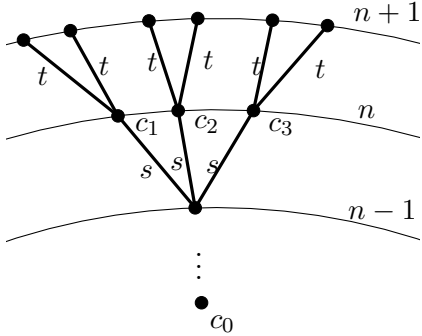
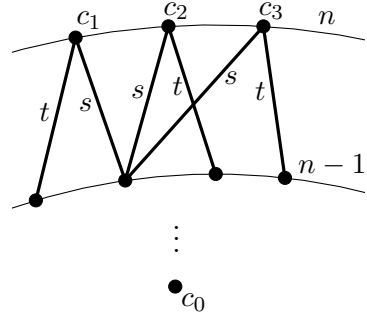
\item
    Let $c\in \Sp(c_0, k)$ for some $k> n$.
    We say that $c$ is \emph{$n$-flexible with respect to $c_0$} if
    for each minimal gallery $\gamma = (c_0, c_1, \dots, c_{n+1}, \dots, c_k = c)$ from $c_0$ to $c$,
    none of the chambers $c_{n+1},\dots,c_k$ is firm.
    By convention, all chambers of $\B(c_0, n)$ are also $n$-flexible with respect to~$c_0$.

    Observe that a chamber $c$ is $n$-flexible with respect to $c_0$ if and only if $F^\#(\delta(c_0, c)) \leq n$.
    In particular, if $c$ is $n$-flexible, then so is any chamber on any minimal gallery between $c_0$ and $c$.
\item
    We define the \emph{$n$-flex of $c_0$}, denoted by $\Flex(c_0, n)$,
    to be the set of all chambers of $\Delta$ that are $n$-flexible with respect to $c_0$.
\end{enumerate}
\end{defn}

We also record the following result, which we rephrased in terms of firm chambers;
its Corollary~\ref{co:ballwing} will be used several times in Section~\ref{se:AutRAB}.

\begin{lem}[{\cite[Lemma 2.15]{DMSS}}]\label{lemmaA}
    Let $c_0$ be a fixed chamber of $\Delta$ and let $s\in S$.
    Let $d \in \Sp(c_0, n)$ and $e \in \B(c_0, n+1)\setminus \ch(\Pa_{s,d})$.
    If $c := \proj_{\Pa_{s,d}}(e) \in \Sp(c_0, n+1)$, then $c$ is not firm with respect to $c_0$.
\end{lem}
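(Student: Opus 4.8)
The plan is to show that, under the hypotheses, $c$ cannot be firm; by definition this amounts to producing two reduced representations of $\delta(c_0,c)$ with different final letters. Throughout, let $R$ be the residue of type $\{s\}\cup s^{\perp}$ containing the panel $\Pa_{s,d}=\Pa_{s,c}$; since residues of a fixed type partition the chambers, $R$ is unique. I would begin with the routine observation that $\proj_{\Pa_{s,d}}(c_0)=d$: applying the gate property for $\proj_{\Pa_{s,d}}$ at the chambers $c$ and $d$ gives $\dw(c_0,c)-\dw(c_0,d)=\dw\bigl(\proj_{\Pa_{s,d}}(c_0),c\bigr)-\dw\bigl(\proj_{\Pa_{s,d}}(c_0),d\bigr)$, and since the left side equals $1$ while $c$, $d$ and $\proj_{\Pa_{s,d}}(c_0)$ all lie in one $s$-panel, the right side can equal $1$ only when $\proj_{\Pa_{s,d}}(c_0)=d$. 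Consequently $\delta(c_0,c)=\delta(c_0,d)\,s$ with $l(\delta(c_0,d)\,s)=n+1$, so $s$ is not a descent of $\delta(c_0,d)$.

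Next I would dispose of the case $\proj_R(c_0)=:g\neq d$, in which — notably — $e$ plays no role. Pick a minimal gallery from $d$ to $g$; it lies in $R$, and by the gate property its first edge, of some type $r\in\{s\}\cup s^{\perp}$, is a descent of $\delta(c_0,d)$. Since $s$ is not a descent of $\delta(c_0,d)$, we get $r\in s^{\perp}$, so $|rs|=2$ and $\delta(c_0,d)$ has a reduced representation ending in $r$. Then $\delta(c_0,c)=\delta(c_0,d)\,s$ has a reduced representation of the shape $\dotsm r s$, which the commutation $rs=sr$ turns into the reduced representation $\dotsm s r$ ending in $r\neq s$. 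Thus $\delta(c_0,c)$ admits reduced representations with different final letters, so $c$ is not firm.

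It then remains to rule out $\proj_R(c_0)=d$. Assuming it, I would take a minimal gallery $(c_0=x_0,x_1,\dotsc,x_m=e)$ with $m=\dw(c_0,e)\le n+1$. Since $\proj_{\Pa_{s,d}}(x_0)=d$ while $\proj_{\Pa_{s,d}}(x_m)=c\neq d$, let $j$ be least with $\proj_{\Pa_{s,d}}(x_j)=c$; then $1\le j\le m$ and $\proj_{\Pa_{s,d}}$ takes distinct values at $x_{j-1}$ and $x_j$. Hence the image under $\proj_{\Pa_{s,d}}$ of the panel $P$ carrying the edge $x_{j-1}x_j$ is a residue of rank~$1$ inside the $s$-panel $\Pa_{s,d}$, so it equals $\Pa_{s,d}$; by \cite[Section~2]{Caprace} this makes $P$ parallel to $\Pa_{s,d}$, whence by Proposition~\ref{Proposition2.8} the panel $P$ has type $s$ and is contained in $R$. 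In particular $x_{j-1}\in R$, so the gate property (with gate $\proj_R(c_0)=d$) yields $j-1=\dw(c_0,x_{j-1})=n+\dw(d,x_{j-1})\ge n$; together with $j\le m\le n+1$ this forces $j=m=n+1$, $x_{j-1}=d$ and $x_j=e$, so that $d\adj{s}e$ — contradicting $e\notin\ch(\Pa_{s,d})$. Therefore $\proj_R(c_0)\neq d$, and the previous paragraph completes the proof.

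The step I expect to be the crux is the last paragraph: the assertion that along a minimal gallery from $c_0$ to $e$, the first edge whose far endpoint projects onto $c$ in $\Pa_{s,d}$ must have type $s$ and lie in $R$. This is exactly where the theory of parallel residues — that parallel residues share a type, and that parallel $s$-panels sit in a common $(\{s\}\cup s^{\perp})$-residue — is essential. Everything else is bookkeeping with the gate property and with reduced words.
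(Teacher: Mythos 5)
Your argument is correct, and it is worth noting that the paper itself offers no proof to compare against: Lemma~\ref{lemmaA} is imported verbatim from \cite[Lemma~2.15]{DMSS}, so what you have written is an independent, self-contained verification. Your route — first pinning down $\proj_{\Pa_{s,d}}(c_0)=d$ by the gate property, then splitting on whether $\proj_R(c_0)=d$ for the $(\{s\}\cup s^\perp)$-residue $R$ — is sound: in the case $\proj_R(c_0)\neq d$ you correctly produce a second descent $r\in s^\perp$ of $\delta(c_0,c)=\delta(c_0,d)s$, which by Definition~\ref{def:formF}\eqref{it:firm} (uniqueness of the descent) kills firmness; and in the case $\proj_R(c_0)=d$ your gallery argument does force $j=m=n+1$, $x_{j-1}=d$, $x_j=e$ and an $s$-edge between them, contradicting $e\notin\ch(\Pa_{s,d})$, so that case is indeed vacuous under the hypotheses. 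Two small bookkeeping points: the step ``$\proj_{\Pa_{s,d}}(P)=\Pa_{s,d}$ implies $P$ parallel to $\Pa_{s,d}$'' needs the general fact that $\proj_{\Rd_1}$ and $\proj_{\Rd_2}$ restrict to mutually inverse bijections between $\proj_{\Rd_2}(\Rd_1)$ and $\proj_{\Rd_1}(\Rd_2)$ (so that $\proj_P(\Pa_{s,d})$ is a rank-one residue inside $P$, hence all of $P$); this is indeed in \cite[Section~2]{Caprace}, but it is slightly stronger than the bijection-for-parallel-panels remark quoted after Definition~\ref{def:par}, so the citation deserves that precision. Also, once $P$ is parallel to $\Pa_{s,d}$, Proposition~\ref{Proposition2.8} gives exactly what you use (same type $s$, common residue of type $\{s\}\cup s^\perp$, which must be $R$). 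In spirit, the original proof in \cite{DMSS} runs through the square-closing combinatorics of minimal galleries rather than through parallelism of panels, so your proof buys a cleaner, more structural argument at the cost of invoking the projection/parallelism machinery — a perfectly reasonable trade given that the paper already sets up that machinery in Section~\ref{ss:RAB-pre}.
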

\begin{coro}\label{co:ballwing}
    Let $c_0 \in \ch(\Delta)$ and $c \in \Sp(c_0, n+1)$ such that $c$ is firm with respect to $c_0$.
    Let $d$ be the unique chamber of $\Sp(c_0, n)$ adjacent to $c$ and let $s = \delta(d,c) \in S$.
    Then $\B(c_0, n) \subset X_s(d)$.
\end{coro}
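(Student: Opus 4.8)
The plan is to deduce this directly from Lemma~\ref{lemmaA}, once we observe that every chamber of the panel $\Pa_{s,d}$ other than $d$ itself has the same Weyl distance to $c_0$ as $c$, and is therefore firm with respect to $c_0$.

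First I would pin down the panel $\Pa := \Pa_{s,d} = \Pa_{s,c}$. Since $\dw(c_0,d) = n < n+1 = \dw(c_0,c)$ and $d \adj{s} c$, the chamber $d$ is the gate of $\Pa$ towards $c_0$, i.e.\ $\proj_{\Pa}(c_0) = d$; here one uses firmness of $c$ to know that $s$ is the \emph{unique} $r \in S$ with $l(\delta(c_0,c)r) < l(\delta(c_0,c))$, so that no chamber of $\Pa \setminus \{d\}$ can be closer to $c_0$ than $d$. Consequently every chamber $c' \in \ch(\Pa) \setminus \{d\}$ satisfies $\dw(c_0,c') = n+1$ and $\delta(c_0,c') = \delta(c_0,d)\,s = \delta(c_0,c)$, so every such $c'$ is firm with respect to $c_0$; in particular $d$ is the only chamber of $\Pa$ that lies in $\B(c_0,n)$.

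Now let $e \in \B(c_0,n)$ be arbitrary; I want $\proj_{\Pa}(e) = d$, i.e.\ $e \in X_s(d)$. If $e \in \ch(\Pa)$, then by the previous paragraph $e = d$, so $\proj_{\Pa}(e) = d$. If $e \notin \ch(\Pa)$, put $c'' := \proj_{\Pa}(e)$ and suppose for contradiction that $c'' \neq d$. Then $c'' \in \Sp(c_0,n+1)$ by the first step, so Lemma~\ref{lemmaA}, applied to the chamber $d$, the generator $s$ and the chamber $e \in \B(c_0,n+1) \setminus \ch(\Pa_{s,d})$, shows that $c''$ is not firm with respect to $c_0$ — contradicting the fact, established above, that every chamber of $\Pa$ distinct from $d$ is firm with respect to $c_0$. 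Hence $c'' = d$. As $e$ was arbitrary, $\B(c_0,n) \subset X_s(d)$.

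The only slightly delicate point is the first step: identifying $d$ as the gate $\proj_{\Pa}(c_0)$ and concluding that all chambers of $\Pa$ except $d$ lie on $\Sp(c_0,n+1)$ with a common Weyl distance to $c_0$. This is where firmness of $c$ is genuinely used, via Definition~\ref{def:formF}\eqref{it:firm} together with the standard gate property of panels; once it is in place, the rest is an immediate appeal to Lemma~\ref{lemmaA}.
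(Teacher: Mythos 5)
Your proof is correct and follows essentially the same route as the paper: identify that all chambers of $\Pa_{s,d}\setminus\{d\}$ lie in $\Sp(c_0,n+1)$ with Weyl distance $\delta(c_0,c)$ and hence are firm, then apply Lemma~\ref{lemmaA} to force $\proj_{\Pa_{s,d}}(e)=d$ for every $e\in\B(c_0,n)$. The only cosmetic difference is that your gate-identification step does not actually need firmness of $c$ (the standard gate property of panels already forces $d=\proj_{\Pa_{s,d}}(c_0)$, since otherwise $c$ would lie in $\Sp(c_0,n)$); firmness is genuinely used only to conclude that the other chambers of the panel are firm.
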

\begin{proof}
   Let $e \in \B(c_0, n)$. If $e = d$, then of course $e \in X_s(d)$, so assume $e \neq d$; then $e \in \B(c_0, n+1)\setminus \ch(\Pa_{s,d})$.
   Notice that all chambers of $\Pa_{s,d} \setminus \{ d \}$ have the same Weyl distance from $c_0$ as $c$ and hence are firm.
   By Lemma~\ref{lemmaA}, this implies that the projection of $e$ on $\Pa_{s,d}$ must be equal to $d$,
   so by definition of the $s$-wing $X_s(d)$, we get $e \in X_s(d)$.
\end{proof}

We now come to the concept of the square closure of a set of chambers of $\Delta$.
\begin{defn}\phantomsection\label{def:squareclosure}
\begin{enumerate}[(i)]
    \item We say that a subset $T\subseteq W$ is \emph{closed under squares} if the following holds:
        \begin{quote}
            If $ws_i$ and $ws_j$ are contained in $T$ for some $w\in T$ with $|s_is_j|=2$, $s_i\neq s_j$ and $ l(ws_i)=l(ws_j)=l(w)+1$,
            then also $ws_is_j=ws_js_i$ is an element of $T$.
        \end{quote}
    \item\label{it:sqcl}
        Let $c_0$ be a fixed chamber of $\Delta$.
        A set of chambers $\mathscr{C} \subseteq\ch(\Delta)$ is \emph{closed under squares} with respect to $c_0$ if for each $n\in \N$, the following holds
        (see Figure~\ref{fig:a}):
        \begin{quote}
            If $c_1, c_2 \in \mathscr{C} \cap \Sp(c_0, n)$ and $c_4\in \mathscr{C}\cap \Sp(c_0, n-1)$
            such that $c_4\adj{s_i} c_1$ and $c_4\adj{s_j}c_2$ for some $|s_is_j|=2$ with $s_i\neq s_j$,
            then the unique chamber $c_3\in \Sp(c_0, n+1)$ such that $c_3\adj{s_j}c_1$ and $c_3\adj{s_i}c_2$ is also in $\C$.
        \end{quote}
        In particular, if $\C$ is closed under squares with respect to $c_0$, then the set of Weyl distances
        $\{ \delta(c_0, c)\mid c\in \mathscr{C}\}\subseteq W$ is closed under squares.
    \item
        Let $c_0\in \ch(\Delta)$ and let $\mathscr {C}\subseteq \ch(\Delta)$.
        We define the \emph{square closure} of~$\C$ with respect to $c_0$
        to be the smallest subset of $\ch(\Delta)$ containing $\C$ and closed under squares with respect to $c_0$.
\end{enumerate}
\end{defn}

\begin{thm}\label{th:flex}
Let $c_0 \in \ch(\Delta)$ and let $n \in \N$.
The square closure of $\B(c_0, n)$ with respect to $c_0$ is $\Flex(c_0, n)$.
Moreover, the set $\Flex(c_0, n)$ is bounded.
\end{thm}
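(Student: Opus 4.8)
The plan is to prove the two assertions separately, starting with the identification of the square closure of $\B(c_0,n)$ with $\Flex(c_0,n)$, and then deducing boundedness from Theorem~\ref{th:boundforN}. For the first assertion I would argue by a double inclusion. To see that $\Flex(c_0,n)$ is closed under squares with respect to $c_0$, I would use the reformulation recorded in Definition~\ref{def:Ai}, namely that $c \in \Flex(c_0,n)$ if and only if $F^\#(\delta(c_0,c)) \leq n$: given chambers $c_1,c_2 \in \Flex(c_0,n)\cap\Sp(c_0,m)$ adjacent to a common $c_4 \in \Flex(c_0,n)\cap\Sp(c_0,m-1)$ with commuting colors $s_i \neq s_j$, the chamber $c_3 \in \Sp(c_0,m+1)$ closing the square satisfies $\delta(c_0,c_3) = \delta(c_0,c_4)s_is_j$, and I would show $F^\#(\delta(c_0,c_4)s_is_j) \leq n$ by exhibiting a reduced representation of $\delta(c_0,c_3)$ whose firm prefix has length at most $n$ — concretely, take a reduced word $w$ for $\delta(c_0,c_1)$ of firmness $\leq n$ ending in $s_i$ (possible since $c_1$ is reached from $c_4$ by an $s_i$-step), then $w s_j$ is reduced and, since $|s_is_j|=2$, the final $s_j$ commutes past $s_i$, so no firm prefix longer than $F^\#(w) \leq n$ is created; Lemma~\ref{le:firmament}\eqref{it:Fwr} handles the bookkeeping. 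Since $\B(c_0,n) \subseteq \Flex(c_0,n)$ by the convention in Definition~\ref{def:Ai}, this gives the inclusion ``square closure $\subseteq \Flex(c_0,n)$''.

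For the reverse inclusion, let $\mathscr{C}$ denote the square closure of $\B(c_0,n)$ with respect to $c_0$, and let $c \in \Flex(c_0,n)$; I must show $c \in \mathscr{C}$. I would induct on $\dw(c_0,c)$, the base case $\dw(c_0,c) \leq n$ being immediate. For the inductive step, pick a minimal gallery from $c_0$ to $c$; since $F^\#(\delta(c_0,c)) \leq n$ and $\dw(c_0,c) > n$, the element $\delta(c_0,c)$ is not firm, so its last letter can be moved; using Observation~\ref{obs:Iposet}\eqref{it:Iw} applied to the last index, I get a reduced representation $w = s_{j_1}\dotsm s_{j_k} s_\ell t_1 \dotsm t_q$ in which the penultimate-type letter $s$ and some later commuting letter witness non-firmness. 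Translating this back into the building via the Closing Squares Lemmas (Lemma~\ref{Squares} and Lemma~\ref{Squares2}): the non-firmness of $c$ means there are two distinct chambers $d_1, d_2 \in \Sp(c_0, \dw(c_0,c)-1)$ adjacent to $c$ with commuting colors, so $c$ is the square-closure completion of $d_1, d_2$ over a chamber $c_4 \in \Sp(c_0, \dw(c_0,c)-2)$; each of $d_1, d_2, c_4$ lies on a minimal gallery to $c$, hence is itself $n$-flexible (by the last sentence of Definition~\ref{def:Ai}), and has strictly smaller distance to $c_0$, so by induction $d_1, d_2, c_4 \in \mathscr{C}$, whence $c \in \mathscr{C}$ by closure under squares. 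The one point requiring care is producing the chamber $c_4$ of distance $\dw(c_0,c)-2$: this is exactly what the Closing Squares Lemmas deliver, so I would feed the two colors and the two chambers $d_1,d_2$ into Lemma~\ref{Squares} (with $c$ in the role of $c_3$) to obtain $c_4$.

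Finally, for boundedness: by Theorem~\ref{th:boundforN} applied with this $n$, there is $d(n) \in \N$ such that every $\ow \in W$ with $l(\ow) > d(n)$ has $F^\#(\ow) > n$. Hence any chamber $c$ with $\dw(c_0,c) > d(n)$ has $F^\#(\delta(c_0,c)) > n$, so $c \notin \Flex(c_0,n)$; therefore $\Flex(c_0,n) \subseteq \B(c_0, d(n))$, which is the desired bound. The main obstacle I anticipate is the careful translation in the reverse inclusion between the combinatorics of $\prec_w$ / firmness in the Coxeter group and the geometry of galleries and spheres in the building — in particular, making sure that the chamber realizing the ``moved'' last letter genuinely provides a second neighbor of $c$ at distance $\dw(c_0,c)-1$ with a commuting color, so that the Closing Squares machinery applies; everything else is bookkeeping with the lemmas already in place.
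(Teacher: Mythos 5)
Your reverse inclusion (induction on $\dw(c_0,c)$, producing two lower neighbours of a non-firm chamber and closing the square downwards via Lemma~\ref{Squares}) and your boundedness argument via Theorem~\ref{th:boundforN} are exactly the paper's arguments and are fine. The problem is the forward inclusion, i.e.\ the claim that $\Flex(c_0,n)$ is closed under squares. Your key step is: take a reduced word $w$ for $\delta(c_0,c_1)$ ending in $s_i$ with $F^\#(w)\leq n$; since $|s_is_j|=2$, appending $s_j$ ``creates no firm prefix longer than $F^\#(w)$''. This inference is false. The new letter $s_j$ need not interact with the \emph{last} letter of $w$ at all: it can be pushed deep into the word and combine with earlier letters to produce a strictly longer firm prefix. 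Concretely, take the right-angled diagram $r_1\text{--}r_2\text{--}r_3\text{--}r_4\text{--}r_5$ (a path, all edges labelled $\infty$), $w=r_2r_1r_4r_5$, $s_i=r_5$, $s_j=r_3$; then $|r_5r_3|=2$ and $F^\#(w)=2$, but $wr_3\sim r_2r_4r_3r_1r_5$ and $r_2r_4r_3$ is firm, so $F^\#(wr_3)=3$. Also note that Lemma~\ref{le:firmament}\eqref{it:Fwr}, which you cite for ``bookkeeping'', only gives the lower bound $F^\#(wr)\geq F^\#(w)$, and that ``exhibiting a reduced representation whose firm prefix has length at most $n$'' can never bound $F^\#$ from above, since by Observation~\ref{obs:Iposet}\eqref{it:Fwmax} the firmness is a maximum over \emph{all} reduced representations.

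The underlying reason your argument cannot work as written is that it only uses the flexibility of $c_1$ (together with $|s_is_j|=2$), whereas the flexibility of $c_2$ is an essential hypothesis: in the counterexample above, the chamber at Weyl distance $r_2r_1r_4r_3$ is not $2$-flexible, which is precisely what blocks the square-closure hypothesis there. The paper's proof handles this by working in the building rather than purely in $W$: it takes an arbitrary minimal gallery $(c_0=v_0,\dots,v_k=c_3)$ and shows inductively, descending along the gallery and repeatedly applying Lemma~\ref{Squares} against \emph{both} $c_1$ and $c_2$, that each of $v_{n+1},\dots,v_k$ has two lower neighbours of distinct types and hence is not firm (the case $v_{k-1}\in\{c_1,c_2\}$ being covered by the flexibility hypothesis on $c_1,c_2$). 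You would need an argument of this kind, or a correct Coxeter-theoretic statement using both $F^\#(us_i)\leq n$ and $F^\#(us_j)\leq n$, to close this gap.
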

\begin{proof}
We will first show that $\Flex(c_0, n)$ is indeed closed under squares.
Let $c$ be a chamber in $\Flex(c_0, n)$ at Weyl distance $w$ from $c_0$ and
let $c_1$ and $c_2$ be chambers in $\Flex(c_0, n)$ adjacent to $c$, at Weyl distance $ws_i$ and $ws_j$ from $c_0$, respectively,
such that $|s_is_j|=2$ and $l(ws_i)=l(ws_j)=l(w)+1$.
Let $c_3$ be the unique chamber at Weyl distance $ws_is_j$ from $c_0$ that is $s_j$-adjacent to $c_1$ and $s_i$-adjacent to $c_2$.
\begin{figure}[ht!]
\centering
\scalebox{0.9}{
\begin{tikzpicture}[scale=0.8, line cap=round,line join=round,>=triangle 45,x=1.0cm,y=1.0cm]
\clip(-2.84,-2.5) rectangle (11,8.5);
\draw [shift={(3.02,-2.96)}] plot[domain=0.7512650838906699:2.3308834410494965,variable=\t]({1.*2.9008274681545605*cos(\t r)+0.*2.9008274681545605*sin(\t r)},{0.*2.9008274681545605*cos(\t r)+1.*2.9008274681545605*sin(\t r)});
\draw [shift={(3.02,-3.96)}] plot[domain=0.8674645121455721:2.168352779780061,variable=\t]({1.*6.5557303178211965*cos(\t r)+0.*6.5557303178211965*sin(\t r)},{0.*6.5557303178211965*cos(\t r)+1.*6.5557303178211965*sin(\t r)});
\draw [shift={(3.02,-3.96)}] plot[domain=0.910170346071863:2.10481737690763,variable=\t]({1.*8.409233020912193*cos(\t r)+0.*8.409233020912193*sin(\t r)},{0.*8.409233020912193*cos(\t r)+1.*8.409233020912193*sin(\t r)});
\draw [shift={(3.02,-3.96)}] plot[domain=0.9417135484806982:2.071676005884348,variable=\t]({1.*10.265066974939813*cos(\t r)+0.*10.265066974939813*sin(\t r)},{0.*10.265066974939813*cos(\t r)+1.*10.265066974939813*sin(\t r)});
\draw [line width=1.6pt] (3.040285010786088,-0.059243457589485)-- (3.052,1);
\draw [line width=1.6pt] (3.72,7.96)-- (0.044717335644615996,5.864423294381525);
\draw [line width=1.6pt] (3.72,7.96)-- (6.736718698965575,5.608573671804991);
\draw [line width=1.6pt] (3.72,7.96)-- (3.4230315098032142,6.2971519244917955);
\draw [line width=1.6pt] (0.044717335644615996,5.864423294381525)-- (1.2124275902570423,4.25266594861474);
\draw [line width=1.6pt] (6.736718698965575,5.608573671804991)-- (5.322260551648805,4.127941416227109);
\draw [line width=1.6pt] (5.322260551648805,4.127941416227109)-- (3.4230315098032142,6.2971519244917955);
\draw [line width=1.6pt] (3.4230315098032142,6.2971519244917955)-- (1.2124275902570423,4.25266594861474);
\draw [line width=1.6pt] (3.4230315098032142,6.2971519244917955)-- (3.260738512591466,4.4457863979853345);
\draw [line width=1.6pt] (3.260738512591466,4.4457863979853345)-- (3.22100147846369,2.592648197916276);
\draw [line width=1.6pt] (1.2124275902570423,4.25266594861474)-- (1.119356951721898,2.314165761520183);
\draw (3.02,8.64) node[anchor=north west] {$c_3=v_k$};
\draw (-0.62,6.6) node[anchor=north west] {$c_2$};
\draw (6.90,6.14) node[anchor=north west] {$c_1$};
\draw (3.52,6.85) node[anchor=north west] {$v_{k-1}$};
\draw (1.92,5.96) node[anchor=north west] {$s_i$};
\draw (1.2,7.6) node[anchor=north west] {$s_i$};
\draw (4.32,5.78) node[anchor=north west] {$s_j$};
\draw (5.1,7.48) node[anchor=north west] {$s_j$};
\draw (2.78,7.26) node[anchor=north west] {$s_k$};
\draw (-0.15,5.38) node[anchor=north west] {$s_k$};
\draw (6.1,5.18) node[anchor=north west] {$s_k$};
\draw (0.48,4.28) node[anchor=north west] {$d_2$};
\draw (5.30,4.1) node[anchor=north west] {$d_1$};
\draw (3.3,4.92) node[anchor=north west] {$v_{k-2}$};
\draw [line width=1.6pt] (3.260738512591466,4.4457863979853345)-- (1.119356951721898,2.314165761520183);
\draw [line width=1.6pt] (3.260738512591466,4.4457863979853345)-- (5.308664607439836,2.183257630496402);
\draw [line width=1.6pt] (5.308664607439836,2.183257630496402)-- (5.322260551648805,4.127941416227109);
\draw (1.70,4.06) node[anchor=north west] {$s_i$};
\draw (4.05,3.96) node[anchor=north west] {$s_j$};
\draw (2.82,2.58) node[anchor=north west] {$v_{k-3}$};
\draw (3.18,1.38) node[anchor=north west] {$v_{n+1}$};
\draw (3,-0.2) node[anchor=north west] {$v_n$};
\draw (8.3,5.54) node[anchor=north west] {$l(w)+1$};
\draw (7.72,3.64) node[anchor=north west] {$l(w)$};
\draw (7.06,2.06) node[anchor=north west] {$l(w)-1$};
\draw (5.1, 0) node[anchor=north west] {$n$};
\draw (3.28,-1.8) node[anchor=north west] {$c_0 = v_0$};
\begin{scriptsize}
\draw [fill=black] (3.02,-2.0) circle (2.5pt);
\draw [fill=black] (3.052,1) circle (2.5pt);
\draw [fill=black] (3.040285010786088,-0.08) circle (2.5pt);
\draw [fill=black] (3.72,7.96) circle (2.5pt);
\draw [fill=black] (0.044717335644615996,5.864423294381525) circle (2.5pt);
\draw [fill=black] (6.736718698965575,5.608573671804991) circle (2.5pt);
\draw [fill=black] (3.4230315098032142,6.2971519244917955) circle (2.5pt);
\draw [fill=black] (1.2124275902570423,4.25266594861474) circle (2.5pt);
\draw [fill=black] (5.322260551648805,4.127941416227109) circle (2.5pt);
\draw [fill=black] (3.260738512591466,4.4457863979853345) circle (2.5pt);
\draw [fill=black] (3.22100147846369,2.592648197916276) circle (2.5pt);
\draw [fill=black] (1.119356951721898,2.314165761520183) circle (2.5pt);
\draw [fill=black] (5.308664607439836,2.183257630496402) circle (2.5pt);
\end{scriptsize}
\end{tikzpicture}
}
\caption{Proof of Theorem~\ref{th:flex}}\label{fig:flex}
\end{figure}
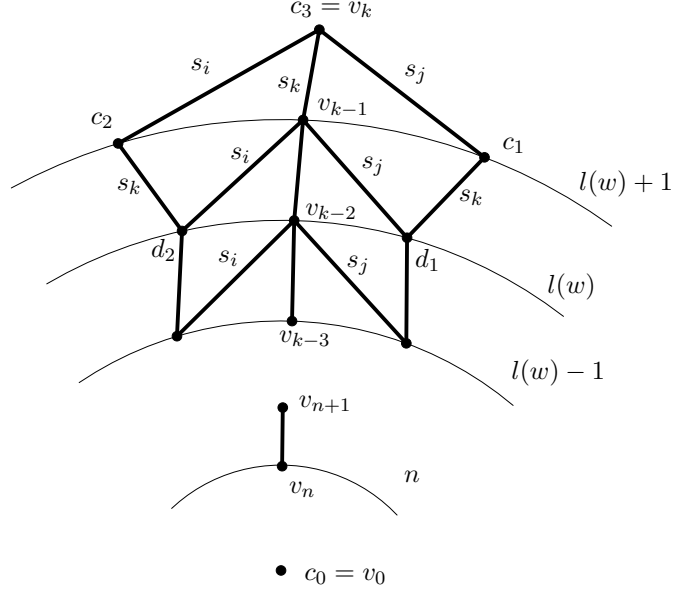

\noindent
Our aim is to show that also $c_3$ is an element of $\Flex(c_0, n)$.
If $l(ws_is_j)\leq n$, then this is obvious, so we may assume that $l(ws_is_j)>n$.

Let $\gamma = (c_0 = v_0, \dots, v_{n+1}, \dots, v_k = c_3)$ be an arbitrary minimal gallery between $c_0$ and $c_3$, as in Figure~\ref{fig:flex}
(so $k = l(w)+2 > n$).
We have to show that none of the chambers $v_{n+1},\dots,v_k$ is firm with respect to $c_0$.
This is clear for $v_k = c_3$.

If $k = n+1$, then there is nothing left to show, so assume $k \geq n+2$.
If $v_{k-1} \in\{ c_1, c_2 \}$, then $v_{k-1}$ is $n$-flexible by assumption, and since $k-1 > n$ it is not firm.
(In fact, this shows immediately that in this case, none of the chambers $v_{n+1},\dots,v_{k-1}$ is firm).
So assume that $v_{k-1}$ is distinct from $c_1$ and $c_2$;
then $v_{k-1}$ is $s_k$-adjacent to $c_3$ for some $s_k$ different from $s_i$ and~$s_j$.
Then by closing squares (Lemma~\ref{Squares}), we have $|s_js_k|=2$ and there is a chamber $d_1 \in \Sp(c_0, l(w))$
such that $d_1 \adj{s_j} v_{k-1}$ and $d_1 \adj{s_k} c_1$.
Similarly, there is a chamber $d_2 \in \Sp(c_0, l(w))$ such that $d_2 \adj{s_i} v_{k-1}$ and $d_2 \adj{s_k} c_2$.
Hence $v_{k-1}$ is not firm with respect to $c_0$.

Continuing this argument inductively (see Figure~\ref{fig:flex}), we conclude that none of the chambers $v_{n+1},\dots,v_k$ is firm with respect to $c_0$.
Hence $c_3$ is $n$-flexible; we conclude that $\Flex(c_0, n)$ is closed under squares with respect to $c_0$.

\smallskip

Conversely, let $\C$ be a set of chambers closed under squares that contains $\B(c_0, n)$;
we have to prove that $\Flex(c_0, n) \subseteq \C$.
So let $c \in \Flex(c_0, n)$ be arbitrary; we will show by induction on $k := \dw(c_0, c)$ that $c \in \C$.
This is obvious for $k \leq n$, so assume $k > n$.
Then $c$ is not firm, hence there exist $c_1, c_2\in \Sp(c_0, k-1)$ such that $c_1 \adj{s_1} c$ and $c_2 \adj{s_2} c$ for some $s_1 \neq s_2 \in S$.
By Lemma~\ref{Squares} we have $|s_1s_2|=2$ and there is $d\in \Sp(c_0, k-2)$ such that
$d \adj{s_2} c_1$ and $d \adj{s_1} c_2$.

Since $c$ is $n$-flexible and $c_1$, $c_2$ and $d$ all lie on some minimal gallery between $c_0$ and $c$,
it follows that also $c_1$, $c_2$ and $d$ are $n$-flexible.
By the induction hypothesis, all three elements are contained in $\C$.
Since $\C$ is assumed to be closed under squares, however, we immediately deduce that also $c \in \C$.

We conclude that $\Flex(c_0, n)$ is the square closure of $\B(c_0, n)$ with respect to $c_0$.

\smallskip

We finally show that $\Flex(c_0, n)$ is a bounded set.
Recall that a chamber $c$ is contained in $\Flex(c_0, n)$ if and only if $F^\#(\delta(c_0,c)) \leq n$.
By Theorem~\ref{th:boundforN}, there is a constant $d(n)$ such that $F^\#(\ow) > n$ for all $\ow \in W$ with $l(\ow) > d(n)$.
This shows that $\Flex(c_0, n) \subseteq \B(c_0, d(n))$ is indeed bounded.
\end{proof}

\begin{rem}
    In fact, the square closure of $\B(c_0, n)$ with respect to $c_0$ is precisely the convex hull of $\B(c_0, n)$.
    Indeed, the square closure is clearly contained in the convex hull.
    For the reverse inclusion, the crucial fact is that for any two chambers $c_1,c_2$ in $\B(c_0, n)$, there always exists
    a minimal gallery from $c_1$ to $c_2$ completely contained in $\B(c_0, n)$, and any two minimal galleries can be transformed
    into each other by a sequence of closing square operations; see \cite[Lemma~2.11]{DMSS} and its proof.
\end{rem}

\section{The automorphism group of a right-angled building}\label{se:AutRAB}

In this section, we study the group $\Aut(\Delta)$ of type-preserving automorphisms of a thick semi-regular right-angled building $\Delta$.
We will first study the action of a ball fixator and introduce root wing groups.
Then, when the building is locally finite, we will show that any proper open subgroup of $\Aut(\Delta)$ is a finite index subgroup of
the stabilizer of a proper residue; see Theorem~\ref{th:openpropermain}.

\begin{defn}\label{def:semi-regular}
Let $\Delta$ be a right-angled building of type $(W,S)$.
Then $\Delta$ is called \emph{semi-regular} if for each $s$, all $s$-panels of $\Delta$ have the same cardinality~$q_s$ of chambers.
In this case, the building is said to have \emph{prescribed thickness} $(q_s)_{s\in S}$ in its panels.

By \cite[Proposition 1.2]{HP2003}, there is a unique right-angled building of type $(W, S)$ of prescribed thickness $(q_s)_{s\in S}$
for any choice of cardinal numbers $q_s \geq 1$.
\end{defn}

\begin{thm}[{\cite[Theorem~B]{KT12}, \cite[Theorem 1.1]{Caprace}}]\label{th:caprace-main}
Let $\Delta$ be a thick semi-regular building of right-angled type $(W,S)$.
Assume that $(W,S)$ is irreducible and non-spherical.
Then the group $\Aut(\Delta)$ of type-preserving automorphisms of $\Delta$ is abstractly simple and acts strongly transitively on $\Delta$.
\end{thm}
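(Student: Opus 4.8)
The plan is to prove the two assertions of the theorem separately --- strong transitivity of the action, and abstract simplicity of $\Aut(\Delta)$ --- noting first that faithfulness of the action on $\ch(\Delta)$ is built into the definition and needs no argument. For \textbf{strong transitivity}, the point is that $\Delta$ is \emph{highly homogeneous}: by the Haglund--Paulin uniqueness result recalled in Definition~\ref{def:semi-regular}, $\Delta$ is the unique thick right-angled building of type $(W,S)$ with the prescribed thickness $(q_s)_{s\in S}$. Transitivity on chambers is then immediate, and to carry one apartment through a fixed chamber $c$ onto another one I would build an automorphism ``gallery by gallery'', using the freedom to recolor each panel independently --- this is exactly the mechanism underlying the universal groups of \cite{DMSS}; equivalently, one observes that the universal group with all local groups equal to $\Sym(q_s)$ already sits inside $\Aut(\Delta)$ and is strongly transitive. (One may also simply quote \cite[Theorem~B]{KT12} here.)

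For \textbf{abstract simplicity} I would follow the Tits--Caprace route, using wing-fixators as a substitute for root subgroups. For a panel $\Pa = \Pa_{s,c}$ and a chamber $d \in \ch(\Pa)$, let $U_{s,d}$ be the pointwise fixator of the $s$-wing $X_s(d)$ of Definition~\ref{def:wings} (these are the root wing groups appearing later in the paper). The argument has three ingredients. First, one shows that $\Aut(\Delta)$ is \emph{generated} by the groups $U_{s,d}$; this is where semi-regularity is genuinely used, and it is proved by factoring a given automorphism into pieces supported on individual wings. Second, one establishes an \emph{independence property} in the spirit of Tits' property (P): since the $s$-wings of the chambers of a fixed $s$-panel partition $\ch(\Delta)$ into combinatorially convex sets (\cite[Proposition~3.2]{Caprace}), the pointwise fixator of a panel decomposes as the restricted direct product $\prod_{d \in \ch(\Pa)} U_{s,d}$. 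Third, using irreducibility and non-sphericity of $(W,S)$ together with thickness, one produces automorphisms behaving like loxodromic isometries with transverse axes, so that the action is minimal and ``of general type'' (no fixed residue at infinity, no invariant bounded set); this is the step that breaks down in the spherical or reducible case.

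Given these, the simplicity conclusion is the standard normal-subgroup argument. Let $N \trianglelefteq \Aut(\Delta)$ with $N \neq 1$ and pick $1 \neq n \in N$. Using general position, choose an $s$-panel $\Pa$ and a chamber $d \in \ch(\Pa)$ with $X_s(d)$ and $n X_s(d)$ disjoint. Then for every $g \in U_{s,d}$ the commutator $[g,n] = g \cdot (n g n^{-1})^{-1}$ lies in $N$, and its $U_{s,d}$-component under the product decomposition from the independence property is precisely $g$; hence $U_{s,d} \subseteq N$. By strong transitivity all wing-fixators are conjugate in $\Aut(\Delta)$, so $N$ contains every $U_{s,d}$, whence $N \supseteq \langle U_{s,d} : s \in S,\, d \rangle = \Aut(\Delta)$, i.e. $N = \Aut(\Delta)$. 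Together with faithfulness this yields abstract simplicity.

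The \textbf{main obstacle} is not the commutator endgame, which is routine Tits machinery, nor the independence property, which follows fairly directly from convexity of wings and semi-regularity. It is rather the two structural inputs: showing that $\Aut(\Delta)$ is actually generated by its wing-fixators (easy for trees, but in higher rank it requires real control of parallel panels and of the local structure of semi-regular right-angled buildings), and verifying the ``general position'' hypothesis, which is exactly the place where irreducibility and non-sphericity are indispensable.
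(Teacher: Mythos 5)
The paper itself offers no proof of Theorem~\ref{th:caprace-main}: it is quoted verbatim from \cite[Theorem~B]{KT12} (strong transitivity) and \cite[Theorem~1.1]{Caprace} (simplicity), so your sketch can only be compared with Caprace's original argument, which it loosely imitates. Measured that way, there is a genuine gap in the simplicity half: the ``independence property'' you invoke is false as stated. First, your factors $U_{s,d}=\Fix_{\Aut(\Delta)}(X_s(d))$ fix the wing and act arbitrarily on its complement, so they cannot be the factors of a product decomposition of the panel fixator; the factors would have to be the groups \emph{supported} on $X_s(d)$, i.e.\ fixing $\Delta\setminus X_s(d)$ pointwise. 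Second, even with the correct factors the decomposition fails at a single panel as soon as $s^\perp\neq\emptyset$: the residue $\Rd$ of type $\{s\}\cup s^\perp$ containing $\Pa$ is a direct product of $\Pa$ with an $s^\perp$-residue, and a type-preserving automorphism fixing $\Delta\setminus X_s(d)$ pointwise is forced to fix \emph{all} of $\Rd$ (acting as $(p,y)\mapsto(p,\beta(y))$ on $\Rd$ with $\beta\neq\mathrm{id}$ would destroy $s$-adjacencies between distinct wings), whereas the pointwise fixator of $\ch(\Pa)$ does move chambers of $\Rd$ --- apply Proposition~\ref{extending} to a $t$-panel through a chamber of $\Pa$ with $t\in s^\perp$; all chambers of $\Pa$ project to the fixed chamber of that $t$-panel, so the resulting automorphism fixes $\ch(\Pa)$ but not $\Rd$. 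Hence $\Fix_{\Aut(\Delta)}(\ch(\Pa))$ is strictly larger than any product of wing-supported groups, there is no ``$U_{s,d}$-component'' to project $[g,n]$ onto, and the disjoint-support commutator trick by itself only yields $[U,U]\subseteq N$. This is precisely why \cite{Caprace} does not argue panel-by-panel in Tits' style but works with tree-walls (parallelism classes of panels) and with the weaker extension statement quoted here as Proposition~\ref{extending}.

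Two further points. Wing fixators of different types are never conjugate under type-preserving automorphisms, so even after repairing the above you only obtain the $s$-wing fixators for one type $s$ inside $N$ by conjugation; covering all types needs an extra argument (rerunning the general-position step for each type, using elements of $N$ already produced). And ``transitivity is immediate from uniqueness'' oversells the first half: the Haglund--Paulin uniqueness and the strong transitivity of \cite[Theorem~B]{KT12}, \cite[Proposition~6.1]{Caprace} are themselves proved by the same inductive extension machinery (convergent infinite products of wing-fixator elements, cf.\ the adaptation in Proposition~\ref{pr:trans-apt}), so this step is a quotation rather than a consequence. Your identification of the two real structural inputs (generation by wing fixators, general position via irreducibility and non-sphericity) is reasonable, but as written the independence step is not merely unproved --- it is false, and the proof needs to be restructured along the lines of \cite{Caprace} to go through.
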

Recall that a group $G$ acts \textit{strongly transitively} on a building $\Delta$ if it acts transitively on the pairs $(A, c)$ of an apartment $A$ and a chamber $c$ contained in $A$.
The strong transitivity has first been shown by Angela Kubena and Anne Thomas \cite{KT12} and has been reproved by Pierre-Emmanuel Caprace
in the same paper where he proved the simplicity \cite{Caprace}.
In our proof of Proposition~\ref{pr:trans-apt} below, we will adapt Caprace's proof of the strong transitivity to a more specific setting.

\medskip

The following extension result is very powerful and will be used in the proof of Theorem~\ref{th:fixedpointset} below.
\begin{prop}[{\cite[Proposition 4.2]{Caprace}}]\label{extending}
Let $\Delta$ be a semi-regular right-angled building. Let $s\in S$ and $\Pa$ be an $s$-panel.
Given any permutation $\theta \in \Sym(\ch(\Pa))$, there is some $g \in \Aut(\Delta)$ stabilizing $\Pa$ satisfying the following two conditions:
\begin{enumerate}[\rm (a)]
\item $g|_{\ch(\Pa)} = \theta$;
\item $g$ fixes all chambers of $\Delta$ whose projection on $\Pa$ is fixed by $\theta$.
\end{enumerate}
\end{prop}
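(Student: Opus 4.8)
The plan is to build $g$ wing by wing with respect to the panel $\Pa$. Write $q := q_s$, let $\ch(\Pa) = \{c_1, \dots, c_q\}$, and recall that the $s$-wings $X_s(c_1), \dots, X_s(c_q)$ partition $\ch(\Delta)$ into combinatorially convex subsets (\cite[Proposition 3.2]{Caprace}) and that projections onto nested residues compose (\cite[Section 2]{Caprace}). First I would isolate the local structure around $\Pa$. Let $\mathcal{R}$ be the residue of type $\{s\}\cup s^{\perp}$ containing $\Pa$. By Proposition~\ref{Proposition2.8}, the $s$-panels parallel to $\Pa$ are precisely the $s$-panels lying in $\mathcal{R}$; and since every element of $s^{\perp}$ commutes with $s$, the residue $\mathcal{R}$ decomposes as a direct product $\mathcal{R} \cong \Pa \times \mathcal{R}'$, where $\mathcal{R}'$ is the $s^{\perp}$-residue of $\mathcal{R}$ and the given panel $\Pa$ is one of the slices $\ch(\Pa)\times\{r_0\}$. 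Under this identification one checks directly that $X_s(c_i)\cap\ch(\mathcal{R}) = \{c_i\}\times\ch(\mathcal{R}')$, that every adjacency between two distinct wings is an $s$-adjacency along a parallel $s$-panel and hence takes place inside $\mathcal{R}$, and that a chamber outside $\mathcal{R}$ is adjacent to at most one chamber of $\mathcal{R}$ (namely its projection onto $\mathcal{R}$, by uniqueness of projections).

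Next, on each wing $X_s(c_i)$ with $\theta(c_i) = c_i$ I would let $g$ be the identity; for each $i$ with $\theta(c_i) \neq c_i$ I would fix a colour-preserving isomorphism of chamber systems $g_i \colon X_s(c_i) \to X_s(\theta(c_i))$ that maps $c_i$ to $\theta(c_i)$ and whose restriction to $X_s(c_i)\cap\ch(\mathcal{R}) = \{c_i\}\times\ch(\mathcal{R}')$ is the canonical identification with $\{\theta(c_i)\}\times\ch(\mathcal{R}')$ that is the identity on the $\mathcal{R}'$-coordinate. Since $\Delta$ is semi-regular, every $s$-wing carries the same local thickness data, so the abstract isomorphism $X_s(c_i)\cong X_s(\theta(c_i))$ exists; the essential point is that it can be chosen to restrict to the prescribed map on the convex subset $\{c_i\}\times\ch(\mathcal{R}')$, which I would deduce from the uniqueness of the semi-regular right-angled building of the relevant local type (\cite[Proposition 1.2]{HP2003}) by a back-and-forth argument, transporting the ``branches'' hanging off the chambers of $\{c_i\}\times\ch(\mathcal{R}')$ one at a time while keeping the $\mathcal{R}'$-coordinate of the gluing chamber fixed. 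Gluing the $g_i$ over all $i$ produces a bijection $g$ of $\ch(\Delta)$ with $g|_{\ch(\Pa)} = \theta$.

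It then remains to check that $g \in \Aut(\Delta)$. Each $g_i$ is a colour-preserving isomorphism of chamber systems on its wing, so $g$ preserves all adjacencies \emph{within} a wing; by the structural facts of the first paragraph, every remaining adjacency lies inside $\mathcal{R}$, where by construction $g$ restricts on $\ch(\mathcal{R})$ to $\theta\times\mathrm{id}_{\mathcal{R}'}$, which is manifestly an automorphism of the sub-building $\mathcal{R}$; hence $g$ preserves those adjacencies too, and $g$ is a type-preserving automorphism stabilizing $\Pa$. This gives condition~(a). For condition~(b), note that a chamber $x$ has $\proj_{\Pa}(x)$ fixed by $\theta$ exactly when $x$ lies in a wing $X_s(c_i)$ with $\theta(c_i) = c_i$, and $g$ is the identity on every such wing; so $g$ fixes all such $x$.

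The main obstacle is the middle paragraph: producing, for the moved wings, isomorphisms $g_i$ that \emph{simultaneously} realise $\theta$ on the slice $\Pa$ and the identity in the $\mathcal{R}'$-direction, consistently across the $s^{\perp}$-panels that straddle different branches of a single wing. I expect the cleanest route is to phrase this as a relative uniqueness statement for semi-regular right-angled buildings — a building together with a distinguished convex ``core'' is determined up to isomorphism, prescribing the isomorphism on the core — obtained by the standard extension-of-partial-isomorphisms argument underlying \cite[Proposition 1.2]{HP2003}. Everything else is bookkeeping with wings and projections.
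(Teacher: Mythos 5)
Note first that the paper contains no proof of this statement at all: it is imported verbatim from \cite[Proposition 4.2]{Caprace}, so there is no internal argument to compare yours with. Judged on its own, your reduction is sound: the decomposition $\mathcal{R}\cong \Pa\times\mathcal{R}'$ of the $\{s\}\cup s^\perp$-residue, the observation that any adjacency between two distinct $s$-wings is an $s$-adjacency along a panel parallel to $\Pa$ and hence lies inside $\mathcal{R}$ (via Proposition~\ref{Proposition2.8} and the constant-or-bijective dichotomy for projections between panels), and the verification of (a) and (b) \emph{assuming} the wing isomorphisms $g_i$ exist, are all correct, and the route through semi-regularity and \cite{HP2003} rather than through transitivity of $\Aut(\Delta)$ correctly avoids a circularity (strong transitivity is proved \emph{using} this proposition).

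The genuine gap is your middle step, and it is not a side issue: producing a type-preserving isomorphism $g_i\colon X_s(c_i)\to X_s(\theta(c_i))$ whose restriction to the entire (possibly infinite) slice $\{c_i\}\times\ch(\mathcal{R}')$ is the canonical identification is essentially the whole content of the proposition, and you assert it by appeal to a ``relative uniqueness'' statement that you do not prove. Moreover, the back-and-forth you sketch --- transporting the branches hanging off the chambers of the slice ``one at a time'' --- does not work as described, because those branches are not independent. For instance, if $t\in s^\perp$ and $u\notin\{s\}\cup s^\perp$ with $|tu|=2$, take $x$ in the slice and $y$ with $\delta(x,y)=u$; then $y$ lies in the branch over $x$, the chamber $y'$ with $y\adj{t}y'$ lies in the branch over $xt$, and so distinct branches are linked by $t$-adjacencies, recursively at every level of the construction. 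Hence the partial isomorphisms on the various branches must be chosen coherently across all commuting directions, and organizing exactly this coherence is the inductive work carried out in Caprace's proof and in the extension-of-partial-isomorphisms argument underlying \cite[Proposition 1.2]{HP2003}. A complete write-up would have to formulate and prove the relative extension statement you invoke (e.g., that an isomorphism between suitable convex subsets of semi-regular right-angled buildings of the same type and thickness extends); as it stands, the skeleton is fine but the crux of the proposition is missing.
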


\subsection{The action of the fixator of a ball in \texorpdfstring{$\Delta$}{Delta}}\label{ss:boundedsection}

In this section we study the action of the fixator $K$ in $\Aut(\Delta)$ of a ball $\B(c_0, n)$ of radius $n$ around a chamber $c_0$.
Our goal will be to prove that the fixed point set $\Delta^K$ coincides
with the square closure of the ball $\B(c_0,n )$ with respect to $c_0$, which is $\Flex(c_0, n)$, and which we know is bounded by Theorem~\ref{th:flex}.
%
%

\begin{thm}\label{th:fixedpointset}
Let $\Delta$ be a thick semi-regular right-angled building.
Let $c_0$ be a fixed chamber of $\Delta$ and let $n \in \N$.
Consider the pointwise stabilizer $K=\Fix_{\Aut(\Delta)}(\B(c_0, n) )$ in $\Aut(\Delta)$ of the ball $\B(c_0, n)$.

Then the fixed-point set $\Delta^K$ is equal to the bounded set $\Flex(c_0, n)$.
\end{thm}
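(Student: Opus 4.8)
The plan is to prove the two inclusions $\Flex(c_0,n) \subseteq \Delta^K$ and $\Delta^K \subseteq \Flex(c_0,n)$ separately; boundedness is then immediate from Theorem~\ref{th:flex}. For the inclusion $\Flex(c_0,n) \subseteq \Delta^K$, I would argue by induction on $\dw(c_0,c)$ that every $c \in \Flex(c_0,n)$ is fixed by $K$. The base case $\dw(c_0,c) \leq n$ is trivial since $K$ fixes $\B(c_0,n)$ pointwise. For the inductive step, suppose $c \in \Flex(c_0,n)$ with $k := \dw(c_0,c) > n$; by Definition~\ref{def:Ai}, $c$ is not firm with respect to $c_0$, so there are two chambers $c_1, c_2 \in \Sp(c_0,k-1)$ with $c_1 \adj{s_1} c$, $c_2 \adj{s_2} c$ and $s_1 \neq s_2$. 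By Lemma~\ref{Squares}, $|s_1s_2|=2$ and there is $d \in \Sp(c_0,k-2)$ with $d \adj{s_2} c_1$ and $d \adj{s_1} c_2$; all of $c_1,c_2,d$ lie on minimal galleries between $c_0$ and $c$, hence are $n$-flexible and so fixed by $K$ by induction. Now $c$ is the unique chamber that is $s_1$-adjacent to $c_1$ and $s_2$-adjacent to $c_2$: indeed, any element $g \in K$ fixes $c_1$ and $c_2$, hence stabilizes the $s_1$-panel through $c_1$ and the $s_2$-panel through $c_2$; since $|s_1s_2|=2$, the two panels are parallel and $c = \proj_{\Pa_{s_1,c_1}}(c_2)$ (using the projection characterization of chambers in a pair of parallel panels and that $c$ is the chamber of $\Pa_{s_1,c_1}$ closest to $c_2$), so $g$ must fix $c$.

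For the reverse inclusion $\Delta^K \subseteq \Flex(c_0,n)$, equivalently I need to show that every chamber $c$ with $F^\#(\delta(c_0,c)) > n$ is moved by some element of $K$. The idea is to produce, along a minimal gallery from $c_0$ to $c$, a firm chamber at distance $> n$ and then move it using Proposition~\ref{extending}. Concretely, write $w := \delta(c_0,c)$ and let $m := F^\#(w) > n$; choose a reduced word $s_1 \dotsm s_\ell$ for $w$ with firm prefix $s_1 \dotsm s_m$, and let $\gamma = (c_0 = v_0, v_1, \dots, v_\ell = c)$ be the corresponding minimal gallery. Then $v_m$ is firm with respect to $c_0$ and $\dw(c_0,v_m) = m > n$. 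By Corollary~\ref{co:ballwing}, if $d := v_{m-1}$ is the unique chamber of $\Sp(c_0,m-1)$ adjacent to $v_m$ and $s := s_m = \delta(d,v_m)$, then $\B(c_0,n) \subseteq \B(c_0,m-1) \subseteq X_s(d)$. Since $\Delta$ is thick, the $s$-panel through $d$ has at least three chambers, so there is a permutation $\theta \in \Sym(\ch(\Pa_{s,d}))$ fixing $d$ and moving $v_m$. By Proposition~\ref{extending}, there is $g \in \Aut(\Delta)$ stabilizing $\Pa_{s,d}$ with $g|_{\ch(\Pa_{s,d})} = \theta$ and fixing every chamber whose projection on $\Pa_{s,d}$ is fixed by $\theta$; in particular $g$ fixes every chamber of $X_s(d) \supseteq \B(c_0,n)$, so $g \in K$, yet $g(v_m) \neq v_m$. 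Hence $v_m \notin \Delta^K$, and therefore $c \notin \Delta^K$ either, since any automorphism fixing $c$ and $\B(c_0,n)$ would fix the unique minimal gallery-convex configuration forcing $v_m$ — more carefully, one observes that $v_m$ lies on a minimal gallery from $c_0$ (in $\B(c_0,n)$) to $c$, and an element of $K$ fixing $c$ would fix $\proj$-type data determining $v_m$; alternatively one simply notes $v_m \in \Flex(c_0,n)$ is false, so by the already-proved direction combined with the first inclusion applied contrapositively it suffices to exhibit the single firm chamber.

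\textbf{Main obstacle.} I expect the delicate point to be the second inclusion: showing not merely that some firm chamber at distance $>n$ is moved, but that $c$ itself is moved by an element of $K$. The cleanest route is probably to show directly that $\Delta^K$ is closed under squares with respect to $c_0$ (an element of $\Aut(\Delta)$ fixing $c_1, c_2, c_4$ as in Definition~\ref{def:squareclosure}\eqref{it:sqcl} automatically fixes the forced chamber $c_3$, by the parallel-panel projection argument), so $\Delta^K$ contains the square closure $\Flex(c_0,n)$; and conversely $\Delta^K \subseteq \Flex(c_0,n)$ because any chamber $c$ with $F^\#(\delta(c_0,c)) > n$ has a firm subchamber $v_m$ at distance $> n$ on a minimal gallery, and the Proposition~\ref{extending} argument above produces $g \in K$ with $g(v_m) \neq v_m$; since $\Flex(c_0,n)$ is closed under passing to minimal-gallery subchambers (Definition~\ref{def:Ai}) and $\Delta^K$ is closed under squares, one checks that $c \in \Delta^K$ would force $v_m \in \Delta^K$, a contradiction. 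Making this last implication rigorous — that fixing $c$ and $\B(c_0,n)$ forces fixing every chamber on every minimal gallery between them — is the technical heart, and relies on the convexity/closing-squares machinery from Section~\ref{se:RAB}.
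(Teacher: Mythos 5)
Your proposal follows essentially the same route as the paper: one inclusion via the observation that $\Delta^K$ is closed under squares with respect to $c_0$ (the chamber closing a square is unique, as your parallel-panel projection argument shows), combined with the square-closure description of $\Flex(c_0,n)$ from Theorem~\ref{th:flex}; the other via producing a firm chamber $v_m$ at distance $>n$ on a minimal gallery from $c_0$ to $c$ and moving it by an element of $K$ using Corollary~\ref{co:ballwing} and Proposition~\ref{extending}. The one step you leave open --- your ``technical heart'', passing from ``some $g\in K$ moves $v_m$'' to ``$c\notin\Delta^K$'' --- is not where any closing-squares or convexity machinery is needed: it is the standard fact that in a building there is a \emph{unique} gallery of a given reduced type between two chambers, so a type-preserving automorphism fixing $c_0$ and $c$ maps a minimal gallery of type $w$ from $c_0$ to $c$ to a gallery of the same type between the same chambers and hence fixes it pointwise; consequently, if every element of $K$ fixed $c$, every element of $K$ would fix $v_m$, contradicting the element you constructed. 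This is exactly the one-line observation the paper makes. Be aware that your suggested shortcut (``$v_m\in\Flex(c_0,n)$ is false, so apply the first inclusion contrapositively'') is logically invalid: the contrapositive of $\Flex(c_0,n)\subseteq\Delta^K$ gives $c\notin\Delta^K\Rightarrow c\notin\Flex(c_0,n)$, which is the wrong direction and cannot substitute for the gallery-rigidity argument. With that standard uniqueness fact inserted, your proof is complete and coincides with the paper's.
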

\begin{proof}
Recall from Theorem~\ref{th:flex} that $\Flex(c_0, n)$ is precisely the square closure of $\B(c_0, n)$ with respect to $c_0$.
First, notice that the fixed point set of any automorphism fixing $c_0$ is square closed with respect to $c_0$
because the chamber ``closing the square'' is unique (see Definition~\ref{def:squareclosure}\eqref{it:sqcl}).
It immediately follows that $\Flex(c_0, n) \subseteq \Delta^K$.

We will now show that if $c$ is a chamber not in $\Flex(c_0, n)$, then there exists a $g \in K$ not fixing $c$.
Since $c$ is not $n$-flexible, there exists a chamber $d$ on some minimal gallery between $c_0$ and $c$ with $k := \dw(c_0, d) > n$ such that $d$ is firm.
Notice that any automorphism fixing $c_0$ and $c$ fixes every chamber on any minimal gallery between $c_0$ and $c$,
so it suffices to show that there exists a $g \in K$ not fixing $d$.

Since $d$ is firm, there is a unique chamber $e \in \Sp(c_0, k-1)$ such that $e \adj{s} d$ for some $s \in S$.
By Corollary~\ref{co:ballwing}, $\B(c_0, n) \subseteq X_s(e)$, where $X_s(e)$ is the $s$-wing of $\Delta$ corresponding to $e$.

Now take any permutation $\theta$ of $\Pa_{s,e}$ fixing $e$ and mapping $d$ to some third chamber $d''$ different from $d$ and $e$ (which exists because $\Delta$ is thick).
By Proposition~\ref{extending}, there is an element $g \in \Aut(\Delta)$ fixing $X_s(e)$ and mapping $d$ to $d''$.
In particular, $g$ belongs to~$K$ and does not fix $d$, as required.

We conclude that $\Delta^K = \Flex(c_0, n)$.
The fact that this set is bounded was shown in Theorem~\ref{th:flex}.
\end{proof}

\subsection{Root wing groups}\label{ss:rootwing}

In this section we define groups that resemble root groups, using the partition of the chambers of a right-angled building by wings;
we call these groups \emph{root wing groups}.

We show that a root wing group acts transitively on the set of apartments of $\Delta$ containing the given root.
We also prove that the root wing groups corresponding to roots disjoint from a ball $\B(c_0, n)$
are contained in the fixator of that ball in the automorphism group.

\smallskip

We first fix some notation for the rest of this section. Recall the notions from Definition~\ref{def:roots}.
\begin{notn}\phantomsection\label{not:A0}
    \begin{enumerate}[(i)]
    \item
        Fix a chamber $c_0\in \ch(\Delta)$ and an apartment $A_0$ containing $c_0$ (which can be considered as the fundamental chamber and the fundamental apartment).
        Let $\Phi$ denote the set of roots of $A_0$.
        For each $\alpha \in \Phi$, we write $-\alpha$ for the root opposite $\alpha$ in $A_0$.
    \item
        We will write $\app$ for the set of all apartments containing $c_0$.
        For any $A \in \app$, we will denote its set of roots by $\Phi_A$.
    \item
        For any $k \in \N$, we write $K_r := \Fix_{\Aut(\Delta)}(\B(c_0, r))$.
    \end{enumerate}
\end{notn}
\begin{defn}\phantomsection\label{def:rootwinggroup}
    \begin{enumerate}[(i)]
    \item
        When $\alpha \in \Phi_A$ is a root in an apartment $A$, its \emph{wall} $\partial\alpha$ consists of the panels of $\Delta$
        having chambers in both $\alpha$ and $-\alpha$.
        Since the building is right-angled, these panels all have the same type $s \in S$, which we refer to as the \emph{type of $\alpha$}
        and write as $\type(\alpha) = s$.
        Notice that the $s$-wings of $A$ are precisely the roots of $A$ of type $s$.
    \item
        Let $\alpha\in \Phi_A$ of type $s$ and let $c \in \alpha$ be such that $\Pa_{s,c} \in \partial\alpha$.
        Then we define the \emph{root wing group} $U_\alpha$ as
        \[ U_\alpha := U_s(c) :=  \Fix_{\Aut(\Delta)}(X_s(c)) . \]
        Observe that $U_\alpha$ does not depend of the choice of the chamber $c$ as all panels in the wall $\partial\alpha$ are parallel
        (see Definition~\ref{def:par}\eqref{it:par})
        and hence determine the same $s$-wings in $\Delta$.
    \end{enumerate}
\end{defn}

The fact that these groups behave, to some extent, like root groups in Moufang spherical buildings or Moufang twin buildings, is illustrated by the following fact.
\begin{prop}\label{pr:trans-apt}
    Let $\alpha \in \Phi_A$ be a root.
    The root wing group $U_\alpha$ acts transitively on the set of apartments of $\Delta$ containing $\alpha$.
\end{prop}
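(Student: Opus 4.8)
The plan is to imitate, in the more specific setting at hand, Caprace's inductive proof of strong transitivity, using Proposition~\ref{extending} as the basic building block. Fix once and for all a chamber $c \in \alpha$ lying on the wall $\partial\alpha$, and write $s := \type(\alpha)$, so that $U_\alpha = \Fix_{\Aut(\Delta)}(X_s(c))$ and $\alpha = X_s(c)\cap\ch(A)$. Since $U_\alpha$ fixes $\alpha$ pointwise, it permutes the apartments through $\alpha$, and it is enough to show that an arbitrary apartment $A'$ through $\alpha$ lies in the $U_\alpha$-orbit of $A$. For such an $A'$ we also have $\alpha = X_s(c)\cap\ch(A')$, and the two bijections $\delta(c,-)\colon\ch(A)\to W$ and $\delta(c,-)\colon\ch(A')\to W$ agree on $\alpha$; composing one with the inverse of the other yields the unique isometry $\phi\colon A\to A'$ with $\phi(c)=c$, and this $\phi$ automatically fixes $\alpha$ pointwise. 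It then suffices to produce $g\in U_\alpha$ with $g|_A=\phi$, for then $g(A)=A'$.

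I would construct $g$ as a (possibly infinite) product $g=\dotsm g_2g_1$ of elements $g_n\in U_\alpha$, with $g_n$ fixing the ball $\B(c,n-1)$ pointwise, so that the product is well defined (each chamber is moved by only finitely many factors) and lies in $U_\alpha$. The $g_n$ are chosen inductively so that $h_n:=g_n\dotsm g_1$ agrees with $\phi$ on $\ch(A)\cap\B(c,n)$. For the inductive step, take $d\in\ch(A)\cap\Sp(c,n+1)$; if $d\in\alpha$ it is already fixed, so assume $d\in\ch(A)\setminus\alpha$. If $\delta(c,d)$ is \emph{not} firm, then $d$ has two distinct neighbours $c_1,c_2\in\ch(A)\cap\Sp(c,n)$ and, by Closing Squares (Lemma~\ref{Squares}) applied inside the thin apartment $A$, a chamber $c_4\in\ch(A)\cap\Sp(c,n-1)$ closing the square; since $h_n$ already behaves like $\phi$ on $c_1,c_2,c_4$, since $\phi$ carries this configuration to the analogous one in $A'$, and since the square-closing chamber is unique (Definition~\ref{def:squareclosure}), one gets $h_n(d)=\phi(d)$ automatically --- this is exactly the square-closedness observation used in the proof of Theorem~\ref{th:fixedpointset}. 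If $\delta(c,d)$ \emph{is} firm, let $e$ be the unique neighbour of $d$ in $\ch(A)\cap\Sp(c,n)$, put $s':=\delta(e,d)$ and $e':=h_n(e)=\phi(e)$; then $h_n(d)$ and $\phi(d)$ both lie in the $s'$-panel $\Pa:=\Pa_{s',e'}$. As $\phi$ fixes $c$, the element $\delta(c,\phi(d))=\delta(c,d)$ is again firm, so $\phi(d)$ is firm with respect to $c$ with unique nearer neighbour $e'$, and Corollary~\ref{co:ballwing} gives $\B(c,n)\subseteq X_{s'}(e')$. I would then apply Proposition~\ref{extending} to $\Pa$ with a permutation $\theta$ fixing $e'$, fixing $\proj_{\Pa}(X_s(c))$ pointwise, and sending $h_n(d)$ to $\phi(d)$: the resulting $g'$ fixes $X_{s'}(e')\supseteq\B(c,n)$ (so it leaves the part already placed untouched), fixes $X_s(c)$ (so $g'\in U_\alpha$), and sends $h_n(d)$ to $\phi(d)$. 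Doing this simultaneously for all firm chambers of $\Sp(c,n+1)$ (the panels $\Pa$ involved are pairwise distinct, so the permutations combine) produces $g_{n+1}$. The base case $n=0$ is just the single crossing of $\Pa_{s,c}$ and is immediate.

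The main obstacle is precisely the choice of $\theta$ in the firm case: it must fix $\proj_{\Pa}(X_s(c))$ pointwise, to keep $g'$ inside $U_\alpha$, while still being free to move $h_n(d)$ to $\phi(d)$. This amounts to the assertion that the wall $\partial\alpha$ (of type $s$, through $\Pa_{s,c}$) and the wall through $\Pa=\Pa_{s',e'}$ do not cross --- equivalently, $X_s(c)\subseteq X_{s'}(e')$, so that $\proj_{\Pa}(X_s(c))=\{e'\}$. I expect this to follow from the combinatorial convexity of wings together with the firmness of $\delta(c,d)$: firmness is exactly what governs how the \emph{last} wall crossed on a minimal gallery from $c$ to $d$ (the one through $\Pa_{s',e'}$) sits relative to the walls crossed earlier, and in particular relative to $\partial\alpha$. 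Turning this into a rigorous argument --- presumably via the parallelism criterion of Proposition~\ref{Proposition2.8} and the poset $\prec_{\delta(c,d)}$ of Section~\ref{ss:poset} --- is the technical heart of the proof; the remaining points (that the infinite product defines an automorphism and that it, like each $h_n$, belongs to $U_\alpha$) are routine.
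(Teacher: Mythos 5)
Your overall strategy coincides with the paper's: both adapt Caprace's inductive construction of strong transitivity, building the required automorphism as a convergent product of elements $g_n$ fixing $\B(c,n-1)$, disposing of the non-firm chambers for free by square-closedness (this is exactly how Theorem~\ref{th:fixedpointset} enters the paper's argument), handling the firm chambers via Corollary~\ref{co:ballwing} together with panel permutations as in Proposition~\ref{extending}, and concluding with the closedness of $U_\alpha$. The problem is that what you call ``the technical heart'' and leave unproved is precisely the content of the proposition, and it is the one point where the paper's proof does genuine work. You need the permutation $\theta$ of $\Pa=\Pa_{s',e'}$ to fix $\proj_{\Pa}(X_s(c))$ pointwise while moving $h_n(d)$ to $\phi(d)$; if the wing $X_s(c)$ is not contained in $X_{s'}(e')$, then $\proj_{\Pa}(X_s(c))$ contains chambers of $\Pa$ other than $e'$ (possibly all of $\Pa$, and possibly $\phi(d)$ or $h_n(d)$ themselves), in which case no admissible $\theta$ exists and the inductive step stalls. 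So without the nesting claim the construction does not merely lack polish --- it breaks, since otherwise one would be proving plain strong transitivity rather than transitivity of the single group $U_\alpha$.

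The paper closes exactly this gap as follows: because the chambers still to be matched at step $n+1$ are firm with respect to $c$, the root $\alpha$ is contained, \emph{inside the apartment under consideration}, in the root determined by the pair $(e',s')$ (the root containing $e'$ but not the firm chamber), and \cite[Lemma 3.4(b)]{Caprace} then upgrades this containment of roots of a common apartment to a containment of the corresponding wings, i.e.\ $X_s(c)\subseteq X_{s'}(e')$, whence $U_{s'}(e')=\Fix_{\Aut(\Delta)}(X_{s'}(e'))\subseteq U_\alpha$ --- precisely the inclusion you need. In other words, the missing step is not a routine convexity argument to be filled in later; it is the specific combination ``firmness of $\delta(c,d)$ implies $\alpha$ lies inside the root at $(e',s')$'' plus Caprace's wing-nesting lemma, and you should supply both. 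A secondary point you gloss over: the elements $g'$ produced for the different firm chambers of $\Sp(c,n+1)$ each fix $\B(c,n)$ but may move the target chambers of the others, so ``the permutations combine'' requires the same bookkeeping as in Caprace's original proof (or, as in the paper, one simply cites that construction and only checks membership in $U_\alpha$); this is minor compared with the nesting issue.
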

\begin{proof}
    We carefully adapt the proof of the strong transitivity of $\Aut(\Delta)$ from \cite[Proposition~6.1]{Caprace}.
    Let $c$ be a chamber of $\alpha$ on the boundary and let $A$ and $A'$ be two apartments of $\Delta$ containing $\alpha$.
    The strategy in \loccit (where $A$ and $A'$ are arbitrary apartments containing $c$)
    is to construct an infinite sequence of automorphisms $g_0, g_1, g_2, \dots$ such that
    \begin{compactenum}[(a)]
        \item each $g_n$ fixes the ball $\B(c, n-1)$ pointwise;
        \item let $A_n := g_n g_{n-1} \dotsm g_0(A)$; then $A_n \cap A' \supseteq \B(c,n) \cap A'$.
    \end{compactenum}
    We will show that the elements $g_i$ constructed in \loccit are all contained in~$U_\alpha$;
    the result then follows because $U_\alpha$ is a closed subgroup of $\Aut(\Delta)$.

    To construct the element $g_{n+1}$, we consider the set $E$ of chambers in $\B(c, n+1) \cap A'$ that are not contained in $A_n$ (as in \loccit).
    The crucial observation now is that by Theorem~\ref{th:fixedpointset}, the chambers of $E$ are firm with respect to $c$.
    Hence, for each $x \in E$, there is a unique chamber $y \in \Sp(c, n)$ that is $s$-adjacent to $x$ (for some $s \in S$).
    The element $g_{n+1}$ constructed in \loccit is then
    contained in the group generated by the subgroups $U_s(y)$ for such pairs $(y, s)$ corresponding to the various elements of $E$.
    However, because the elements of $E$ are firm, the root $\alpha$ is contained in each root corresponding to a pair $(y,s)$ in $A'$;
    \cite[Lemma 3.4(b)]{Caprace} now implies that each such group $U_s(y)$ is contained in $U_\alpha$.
\end{proof}
\begin{rem}
    The group $U_\alpha$ does \emph{not}, in general, act sharply transitively on the set of apartments containing $\alpha$.
    This is clear already in the case of trees: an automorphism fixing a half-tree and an apartment need not be trivial.
\end{rem}
\begin{coro}\label{co:refl}
    Let $\alpha \in \Phi_A$ be a root of type $s$ and let $c,c'$ be two $s$-adjacent chambers of $A$ with $c \in \alpha$ and $c' \in -\alpha$.
    Then there exists an element in $\langle U_\alpha, U_{-\alpha} \rangle$ stabilizing $A$ and interchanging $c$ and $c'$.
\end{coro}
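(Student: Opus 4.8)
The plan is to use the two root wing groups $U_\alpha$ and $U_{-\alpha}$ together with Proposition~\ref{pr:trans-apt} to manufacture the desired element in two steps, each step swapping the two chambers while controlling which apartment is preserved. First I would choose a reference: the panel $\Pa_{s,c}$ lies in the wall $\partial\alpha$, and $c,c'$ are two distinct chambers of this panel with $c \in \alpha$, $c' \in -\alpha$. Since $\Delta$ is thick, there is at least a third chamber, but what we actually need is just to move $c'$ to $c$ inside this panel. Consider the apartment $A$ and let $A' := r_\alpha(A)$ be its reflection; concretely, $A'$ is the apartment obtained by reflecting $A$ across the wall $\partial\alpha$, so that $A'$ contains the root $-\alpha$ (the ``$c'$-side'' of $A$) together with its mirror image on the $c$-side. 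Then $A$ and $A'$ are two apartments both containing the root $-\alpha$ (they share the half on the $c'$-side), and likewise $A$ and $A'$ both contain a common half on the... wait — I must be careful here, so the key step is to set this up so that exactly one of $U_\alpha$, $U_{-\alpha}$ applies at each stage.

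Here is the cleaner route. Let $A_1$ be an apartment containing $\alpha$ and the chamber $c'$ adjacent to $c$ across $\partial\alpha$ — that is, $A_1$ is obtained from $A$ by keeping the half $\alpha$ and replacing $-\alpha \subseteq A$ by its ``reflection through $c$'', so $A_1 \cap A \supseteq \alpha$ and the $s$-panel $\Pa_{s,c}$ of $A_1$ consists of $c$ and $c'$. By Proposition~\ref{pr:trans-apt} applied to the root $\alpha$, there is $u \in U_\alpha$ with $u(A) = A_1$. Since $u$ fixes the wing $X_s(c)$ pointwise and $c' \notin X_s(c)$, the element $u$ sends the chamber of $A$ in $-\alpha$ adjacent to $c$ to $c'$; composing with a relabeling I arrange that $u$ stabilizes $\Pa_{s,c}$ and sends $c'\mapsto$ (the $-\alpha$-chamber of $A$), i.e. effectively $u$ realizes inside $\Pa_{s,c}$ a transposition fixing $c$. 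Dually, by Proposition~\ref{pr:trans-apt} applied to $-\alpha$, there is $v \in U_{-\alpha}$ realizing on $\Pa_{s,c}$ a transposition fixing $c'$. The product $vu$ (or an appropriate word in such elements) then fixes $\Pa_{s,c}$ setwise and acts on its two-element subset $\{c,c'\}$ as the transposition, hence interchanges $c$ and $c'$; and since $u$ carries $A$ to an apartment sharing $\alpha$ with $A$ and $v$ carries it back across $-\alpha$, a bookkeeping argument shows $vu$ stabilizes $A$.

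The main obstacle, and the part requiring genuine care rather than routine manipulation, is the apartment bookkeeping: one must verify that the successive images $u(A)$ and $vu(A)$ are controlled precisely enough that the final composite stabilizes $A$ rather than merely mapping it to some other apartment through $A$. The clean way to handle this is to work entirely inside the thin building $A \cup A'$ where $A' = r_\alpha(A)$: both $U_\alpha$ and $U_{-\alpha}$ act transitively on the (at least two) apartments containing their respective root, so we can pick $u \in U_\alpha$ with $u(A') = A$ and $v \in U_{-\alpha}$ with $v(A) = A'$, arrange both to stabilize $\Pa_{s,c}$ by further composing with wing-fixing elements coming from Proposition~\ref{extending}, and then check that $vu$ stabilizes $A$ by tracking $A \xrightarrow{u} A \xrightarrow{\,?\,}$ — i.e. one verifies $u$ stabilizes $A$ up to the wing, $v$ flips, etc. Once the setwise stabilization of $A$ and of $\Pa_{s,c}$ is in hand, the fact that the action on $\{c,c'\}$ is the nontrivial permutation is immediate from $c \in X_s(c)$ being fixed by $U_\alpha$ while $c'$ is fixed by $U_{-\alpha}$, so a product that is nontrivial on the panel must be the transposition $(c\ c')$.
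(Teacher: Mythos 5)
Your overall strategy---manufacture the swap as a short product of elements of $U_\alpha$ and $U_{-\alpha}$ supplied by the transitivity statement of Proposition~\ref{pr:trans-apt}---is the same as the paper's, but the execution has a genuine gap that no amount of ``bookkeeping'' will close. Every element of $U_\alpha$ fixes $c$ (because $c \in X_s(c)$), and every element of $U_{-\alpha}$ fixes $c'$. Hence for any $u \in U_\alpha$ and $v \in U_{-\alpha}$, injectivity gives $vu(c) = v(c) \neq v(c') = c'$, and symmetrically $uv(c') = u(c') \neq u(c) = c$; so a two-term product of the shape you propose can never interchange $c$ and $c'$, regardless of how the apartments are chosen. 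Your fallback argument is also flawed: an element stabilizing $A$ and $\Pa_{s,c}$ and acting nontrivially on the panel need not be nontrivial on $\{c,c'\}$, since by thickness the panel has at least three chambers and the element may simply permute the chambers outside $\{c,c'\}$. Moreover, your auxiliary apartments give you no room to move: $r_\alpha(A)$ is $A$ itself (the reflection $r_\alpha$ stabilizes $A$, and the ``mirror image'' of $-\alpha$ is $\alpha$), and if $A_1$ is an apartment containing $\alpha$ whose intersection with $\Pa_{s,c}$ is $\{c,c'\}$, then the element $u \in U_\alpha$ with $u(A) = A_1$ satisfies $u(c') = c'$, so nothing at all has happened on the panel.

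What is actually needed---and what the paper does---is a product of (at least) three alternating factors together with an apartment that meets $\Pa_{s,c}$ in a \emph{third} chamber; this is precisely where thickness enters. The paper chooses $A' \neq A$ containing $\alpha$, with $\beta$ the root of $A'$ opposite $\alpha$, and takes $g \in U_\alpha$ with $g(-\alpha) = \beta$ (so $g$ fixes $c$ and moves $c'$ to a chamber of the panel outside $\{c,c'\}$), then $h \in U_{-\alpha}$ carrying $\beta$ to $\alpha$ (so $hg(c') = c$, while $c$ is pushed into the root $\gamma := h.\alpha$), and finally $g' \in U_\alpha$ carrying $\gamma$ back to $-\alpha$, which returns the apartment to $A$ and sends $hg(c)$ to $c'$. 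The resulting element $g'hg \in U_\alpha U_{-\alpha} U_\alpha$ stabilizes $A$ and interchanges $c$ and $c'$; the three factors and the choice of a genuinely different apartment through $\alpha$ are exactly what the obstruction in the previous paragraph forces, and they are the content your proposal defers to ``an appropriate word'' and ``a bookkeeping argument''.
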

\begin{proof}
    Let $A'$ be an apartment different from $A$ containing $\alpha$ (which exists because $\Delta$ is thick) and let $\beta$ be the root opposite $\alpha$ in $A'$.
    By Proposition~\ref{pr:trans-apt}, there is some $g \in U_\alpha$ mapping $-\alpha$ to $\beta$.
    Similarly, there is some $h \in U_{-\alpha}$ mapping $\beta$ to $\alpha$.
    Let $\gamma := h.\alpha$; then there exists a third automorphism $g' \in U_\alpha$ mapping $\gamma$ to $-\alpha$.
    The composition $g'hg \in U_\alpha U_{-\alpha} U_\alpha$ is the required automorphism.
\end{proof}

Next we present a property similar to the FPRS (``Fixed Points of Root Subgroups'') property introduced in \cite{superrigid} for groups with a twin root datum.
It is the analogous statement of \cite[Lemma 3.8]{CM13}, but in the case of right-angled buildings, we can be more explicit.

\begin{lem}\label{3.8}
For every root $\alpha\in \Phi$ with $\dist(c_0, \alpha) > r$, the group $U_{-\alpha}$ is contained in $K_r = \Fix_{\Aut(\Delta)}(\B(c_0, r))$.
\end{lem}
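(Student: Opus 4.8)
The plan is to show directly that $U_{-\alpha}$ fixes the ball $\B(c_0,r)$ pointwise. Write $s := \type(\alpha)$, and following Definition~\ref{def:rootwinggroup} let $\Pa = \Pa_{s,c}$ be a panel of the wall $\partial\alpha$ with $c\in\alpha$; since $\alpha$ is a root of the apartment $A_0$, we may take $\Pa$ inside $A_0$, and then $\Pa\cap A_0=\{c,c'\}$ with $c\in\alpha$ and $c'\in-\alpha$. By definition $U_{-\alpha}=\Fix_{\Aut(\Delta)}(X_s(c'))$, so it suffices to prove
\[ \B(c_0,r)\subseteq X_s(c') , \]
i.e.\ that every chamber $z$ with $\proj_\Pa(z)\neq c'$ satisfies $\dw(c_0,z)>r$.

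First, since $c_0\in-\alpha$, working inside $A_0$ we get $\proj_\Pa(c_0)=c'$, so $c_0\in X_s(c')$, and the gate property of $\Pa$ gives $\dw(c_0,c')=\dw(c_0,c)-1$. Because $c\in\alpha$ we have $\dw(c_0,c)\geq\dist(c_0,\alpha)>r$; I will use the sharper statement that the whole $s$-wing $X_s(c)$ is no nearer to $c_0$ than $\alpha$, that is, the gallery distance from $c_0$ to the nearest chamber of $X_s(c)$ equals $\dist(c_0,\alpha)$. This holds because $\alpha=A_0\cap X_s(c)$ and the retraction $\rho_{A_0,c_0}$ onto $A_0$ based at $c_0$ fixes $A_0$, preserves gallery distances to $c_0$, and carries the $s$-wing of a chamber lying far from $c_0$ into the corresponding root of $A_0$; in particular the gate $\proj_{X_s(c)}(c_0)$ lies in $A_0$, hence in $\alpha$.

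Now let $z$ be a chamber with $z\notin X_s(c')$ and pick a minimal gallery $\gamma=(c_0=v_0,\dots,v_m=z)$. Since $X_s(c')$ is combinatorially convex (\cite[Proposition 3.2]{Caprace}) and contains $c_0$ but not $z$, its chambers on $\gamma$ form an initial segment $v_0,\dots,v_i$, and the step $v_i\adj{}v_{i+1}$ must have type $s$ (a step of type $t\neq s$ lies in a $t$-panel whose projection to $\Pa$ is a single chamber, so it cannot change membership in $X_s(c')$). Thus $v_i,v_{i+1}$ lie in an $s$-panel $\Pa^*$ on which $\proj_\Pa$ is non-constant, which forces $\Pa^*\parallel\Pa$ (using Proposition~\ref{Proposition2.8} and the standard fact that $\proj_{\Pa}(\Pa^*)$ and $\proj_{\Pa^*}(\Pa)$ are parallel). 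Let $\hat c\in\Pa^*$ be the chamber corresponding to $c$ under this parallelism, so $\hat c\in X_s(c)$. Since $v_i=\proj_{\Pa^*}(c_0)$ and $\hat c\neq v_i$, the gate property of $\Pa^*$ yields
\[ \dw(c_0,z)\;\geq\;\dw(c_0,v_{i+1})\;=\;\dw(c_0,v_i)+1\;=\;\dw(c_0,\hat c)\;\geq\;\dw\bigl(c_0,X_s(c)\bigr)\;=\;\dist(c_0,\alpha)\;>\;r , \]
so $z\notin\B(c_0,r)$. Hence $\B(c_0,r)\subseteq X_s(c')$ and therefore $U_{-\alpha}\subseteq K_r$.

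The main obstacle is the bridge between the thin root $\alpha$ and the combinatorially much larger wing $X_s(c)$: one needs that the gate $\proj_{X_s(c)}(c_0)$ already lies in the fundamental apartment, equivalently that $\dw(c_0,X_s(c))=\dist(c_0,\alpha)$. I would establish this via the retraction $\rho_{A_0,c_0}$ as indicated, or by invoking the standard fact that a combinatorially convex set meeting $A_0$ has its gate from any chamber of $A_0$ inside $A_0$; everything else is routine bookkeeping with the gate property and with projections between parallel panels. (Alternatively, the same conclusion can be reached by picking the firm chamber $e:=\proj_{X_s(c)}(c_0)$, observing that its unique closer neighbour is its $s$-neighbour, and applying Corollary~\ref{co:ballwing} at radius $\dw(c_0,e)-1\geq r$.)
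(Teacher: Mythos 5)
Your reduction of the lemma to the inclusion $\B(c_0,r)\subseteq X_s(c')$ is exactly the paper's, but the core of your argument has a genuine gap. The convexity/parallel-panel analysis of a minimal gallery leaving $X_s(c')$ (which is correct as far as it goes) only shows $\dw(c_0,z)\geq\dw(c_0,\hat c)$ for some chamber $\hat c$ of the opposite wing $X_s(c)$; since $X_s(c)$ is itself contained in the complement of $X_s(c')$, everything now rests on the unproved claim $\dw\bigl(c_0,X_s(c)\bigr)\geq\dist(c_0,\alpha)$, which is just a special case of what you set out to prove. None of your justifications of this claim goes through as stated: the assertion that $\rho_{A_0,c_0}$ ``carries the $s$-wing into the corresponding root of $A_0$'' is a restatement of the claim, not an argument; the appeal to the gate $\proj_{X_s(c)}(c_0)$ presupposes that wings are gated, which is established neither in the paper nor in the cited results of Caprace (who only gives combinatorial convexity of wings); and the ``standard fact'' that a convex set meeting $A_0$ has its gate from any chamber of $A_0$ inside $A_0$ is not a quotable fact. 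Your closing parenthetical has the same defect: it assumes the gate $e$ exists, that it is firm, and that $\dw(c_0,e)-1\geq r$, which is again precisely the missing bound.

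The paper closes this exact gap with a one-line firmness argument that also makes the whole gallery apparatus unnecessary. Take $c^*$ to be a chamber of $\alpha$ at distance $n=\dist(c_0,\alpha)>r$ from $c_0$, so the lower bound on its distance is free. If $c^*$ were not firm with respect to $c_0$, it would be $t$-adjacent, for some $t\neq s$, to a chamber of $A_0$ at distance $n-1$; that chamber lies in $-\alpha$ (being closer to $c_0$ than $\dist(c_0,\alpha)$), so $\partial\alpha$ would contain a $t$-panel as well as $s$-panels, which is impossible in a right-angled building. Hence $c^*$ is firm, its unique closer neighbour is its $s$-neighbour $c'^*\in\Sp(c_0,n-1)$, and Corollary~\ref{co:ballwing} immediately gives $\B(c_0,n-1)\subseteq X_s(c'^*)$; by parallelism of the panels of $\partial\alpha$ one has $X_s(c'^*)=X_s(c')$, so $\B(c_0,r)\subseteq X_s(c')$ and $U_{-\alpha}=\Fix_{\Aut(\Delta)}(X_s(c'))\subseteq K_r$. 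To repair your proof, replace the retraction/gate step by this firmness argument applied to the chamber of $\alpha$ nearest $c_0$; once that is done, your crossing-wall analysis becomes superfluous.
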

\begin{proof}
Let $\alpha$ be a root at distance $n > r$ from $c_0$ and let $s$ be the type of~$\alpha$.
Let $c$ be a chamber of $\alpha$ at distance $n$ from $c_0$ and let $c'$ be the other chamber in $\Pa_{s,c}\cap A_0$;
notice that $c' \in \Sp(c_0, n-1)$.
We will show that $\B(c_0, r)\subseteq X_s(c')$, which will then of course imply that $U_{-\alpha} = U_s(c') \subseteq K_r$.

The chamber $c$ is firm with respect to $c_0$ because if $c$ would be $t$-adjacent to some chamber at distance $n-1$ from $c_0$ for some $t \neq s$,
then $\partial\alpha$ would contain panels of type $s$ and of type $t$, which is impossible.
Corollary~\ref{co:ballwing} now implies that $\B(c_0, n-1) \subseteq X_s(c')$, so in particular $\B(c_0, r)\subseteq X_s(c')$.
\end{proof}

Following the idea of \cite[Lemmas 3.9 and 3.10]{CM13}, we present two variations on the previous lemma that allow us to transfer the results
to other apartments containing the chamber $c_0$.

\begin{lem}\label{3.9}
Let $g\in {\Aut(\Delta)}$ and let $A \in \app$ containing the chamber $d=gc_0$.
Let $b\in \Stab_{\Aut(\Delta)}(c_0)$ such that $A=bA_0$, and let $\alpha=b \alpha_0$ be a root of $A$ with $\alpha_0\in \Phi$.

If $\dist(d, -\alpha) > r$, then $bU_{\alpha_0}\inv{b}\subseteq gK_r\inv{g}$.
\end{lem}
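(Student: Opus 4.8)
The plan is to reduce Lemma~\ref{3.9} directly to Lemma~\ref{3.8} by transporting everything back to the fundamental apartment $A_0$ via the element $b$. First I would observe that since $b \in \Stab_{\Aut(\Delta)}(c_0)$, the automorphism $b$ fixes $c_0$ and maps $A_0$ to $A$, hence maps the root $\alpha_0 \in \Phi$ to the root $\alpha = b\alpha_0$ of $A$; consequently it maps $-\alpha_0$ to $-\alpha$ (opposition of roots is preserved by building automorphisms stabilizing an apartment). Because $b$ fixes $c_0$ and acts as an isometry on chamber sets, $\dist(c_0, -\alpha_0) = \dist(b c_0, b(-\alpha_0)) = \dist(c_0, -\alpha)$. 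The hypothesis $\dist(d, -\alpha) > r$ with $d = g c_0$ is not literally about $c_0$, so the first genuine point is to relate $\dist(d,-\alpha)$ to $\dist(c_0, -\alpha_0)$; but here $d = g c_0$ and the statement to be proved, $b U_{\alpha_0} \inv{b} \subseteq g K_r \inv{g}$, is equivalent after conjugating by $\inv{g}$ to $\inv{g} b\, U_{\alpha_0}\, \inv{b} g \subseteq K_r$, i.e. to $(\inv{g}b) U_{\alpha_0} (\inv{g}b)^{-1} \subseteq \Fix_{\Aut(\Delta)}(\B(c_0,r))$.

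So the key step is: set $h := \inv{g} b$; then $h$ need not fix $c_0$, but $h c_0 = \inv{g} b c_0 = \inv{g} d$, and I want to show $h U_{\alpha_0} \inv{h} \subseteq K_r$. Here I would instead argue geometrically. The group $U_{\alpha_0} = U_s(c_1)$ for a suitable boundary chamber $c_1$ of $\alpha_0$ fixes the wing $X_s(c_1)$; conjugating by $b$, the group $b U_{\alpha_0} \inv b = U_{s}(bc_1)$ fixes the wing $b X_s(c_1) = X_s(bc_1)$, which is exactly the wing associated to the root $-\alpha$ in $A$ on the appropriate side — more precisely, $b U_{\alpha_0} \inv b = U_{-\alpha}$ in the notation of Definition~\ref{def:rootwinggroup}, since $b$ maps the $s$-wing of $\Delta$ determined by the $\alpha_0$-side to the $s$-wing determined by the $\alpha$-side. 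Then the claim $b U_{\alpha_0}\inv b \subseteq g K_r \inv g$ becomes $U_{-\alpha} \subseteq g K_r \inv g = \Fix_{\Aut(\Delta)}(g \B(c_0,r)) = \Fix_{\Aut(\Delta)}(\B(d, r))$, because $g$ is an isometry and $d = g c_0$. Thus I have reduced the lemma to the statement: \emph{if $\alpha$ is a root with $\dist(d, -\alpha) > r$, then $U_{-\alpha} \subseteq \Fix_{\Aut(\Delta)}(\B(d,r))$}, where now $d$ plays the role of the base chamber.

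The final step is then to invoke Lemma~\ref{3.8}, but with base chamber $d$ in place of $c_0$ and with the root $-\alpha$ of the apartment $A$ (which contains $d$, since $A = bA_0 \ni bc_0$ — wait, $bc_0 = d$? no, $bc_0 = c_0$; rather $A$ is an apartment containing $d$ by hypothesis). Lemma~\ref{3.8} is stated for $\alpha \in \Phi$, i.e. for roots of the fixed fundamental apartment $A_0$ and the fixed base chamber $c_0$, but its proof uses only that $\alpha$ is a root of \emph{some} apartment containing the base chamber and that the base chamber lies far from $-\alpha$; so I would either cite it in that slightly more general form or re-run its one-paragraph argument verbatim with $(d, A, -\alpha)$ in place of $(c_0, A_0, \alpha)$. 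The hypothesis $\dist(d, -(-\alpha)) = \dist(d,\alpha) $ is not what we have; rather we have $\dist(d,-\alpha)>r$, and Lemma~\ref{3.8} applied to the root $\beta := -\alpha$ requires $\dist(d,\beta) = \dist(d,-\alpha) > r$ and yields $U_{-\beta} = U_{\alpha} \subseteq \Fix(\B(d,r))$ — so I need to be careful about signs and should apply the lemma with the roles arranged so that it produces $U_{-\alpha}$, namely apply Lemma~\ref{3.8}'s proof to the root $\alpha$ itself reading off that $U_{-\alpha} \subseteq \Fix(\B(d,n-1)) \subseteq \Fix(\B(d,r))$ once $\dist(d,\alpha)=n>r$; and since $\dist(d,\alpha)$ and $\dist(d,-\alpha)$ differ by exactly $1$, the hypothesis $\dist(d,-\alpha)>r$ gives $\dist(d,\alpha) > r$ as well (indeed $\dist(d,\alpha)\ge \dist(d,-\alpha)$ when... — this sign bookkeeping is the one delicate spot). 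The main obstacle, then, is purely this sign/side bookkeeping: correctly matching "the wing fixed by $bU_{\alpha_0}\inv b$" with "$U_{-\alpha}$" and lining up the distance hypothesis with the correct half-apartment so that Lemma~\ref{3.8} applies on the nose; the conjugation-by-$g$ translation and the isometry facts are routine.
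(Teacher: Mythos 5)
Your overall strategy is the right one (and is essentially what the paper's one-line proof, a reference to the corresponding argument of Caprace--Marquis, intends): conjugate by $b$ to move from $\alpha_0$ to the root $\alpha$ of $A$, rewrite $gK_r\inv{g}=\Fix_{\Aut(\Delta)}(g\B(c_0,r))=\Fix_{\Aut(\Delta)}(\B(d,r))$, and invoke the FPRS-type Lemma~\ref{3.8} based at the chamber $d$. But the execution contains a genuine error at exactly the spot you flag as delicate. First, the identification of the conjugated group has the wrong sign: by Definition~\ref{def:rootwinggroup}, $U_{\alpha_0}=U_s(c_1)=\Fix_{\Aut(\Delta)}(X_s(c_1))$ where $c_1$ is a boundary chamber lying \emph{inside} $\alpha_0$, so $bU_{\alpha_0}\inv{b}=\Fix_{\Aut(\Delta)}(X_s(bc_1))=U_s(bc_1)$ with $bc_1\in b\alpha_0=\alpha$, i.e.\ $bU_{\alpha_0}\inv{b}=U_{\alpha}$, not $U_{-\alpha}$. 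Second, the patch you then propose is false: $\dist(d,\alpha)$ and $\dist(d,-\alpha)$ do not differ by $1$. Since $d$ lies in the apartment $A$ and $\alpha$, $-\alpha$ partition $\ch(A)$, one of the two distances is $0$; in fact the hypothesis $\dist(d,-\alpha)>r$ forces $d\in\alpha$ and $\dist(d,\alpha)=0$, so Lemma~\ref{3.8} can never be applied at $d$ to the root $\alpha$, and the chain of inclusions you write at the end does not go through as stated.

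The fix is that once the first identification is corrected, the sign bookkeeping comes out on the nose and no patch is needed: the claim becomes $U_\alpha\subseteq\Fix_{\Aut(\Delta)}(\B(d,r))$, and applying (the proof of) Lemma~\ref{3.8} with base chamber $d$, ambient apartment $A$ (which contains $d$ by hypothesis) and root $\beta:=-\alpha$, the required hypothesis is exactly $\dist(d,\beta)=\dist(d,-\alpha)>r$, and the conclusion is $U_{-\beta}=U_\alpha\subseteq\Fix_{\Aut(\Delta)}(\B(d,r))=gK_r\inv{g}$. Your observation that Lemma~\ref{3.8}, though stated for roots of $A_0$ and the base chamber $c_0$, applies verbatim to any root of any apartment through the base chamber is correct and is indeed the point one needs here; with the corrected identification $bU_{\alpha_0}\inv{b}=U_\alpha$, your argument then coincides with the intended one.
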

\begin{proof}
Analogous to the proof of \cite[Lemma 3.9]{CM13}.
\end{proof}

\begin{defn}[{\cite[Section 2.4]{CM13}}]\label{essentialrootsdefn}
Let $w\in W$.
\begin{enumerate}[(i)]
\item A root $\alpha\in \Phi$ is called \emph{$w$-essential} if there is an $n\in \Z$ such that $w^n\alpha\subsetneq \alpha$.
\item A wall is called \emph{$w$-essential} if it is the wall $\partial\alpha$ of some $w$-essential root~$\alpha$.
\end{enumerate}
\end{defn}

\begin{lem}\label{3.10}
Let $A \in \app$ and let $b\in \Stab_{\Aut(\Delta)}(c_0)$ such that $A=bA_0$.
Also, let $\alpha=b\alpha_0$ (with $\alpha_0\in \Phi$) be a $w$-essential root for some $w\in \Stab_{\Aut(\Delta)}(A)/\Fix_{\Aut(\Delta)}(A)$.
Let $g\in \Stab_{\Aut(\Delta)}(A)$ be a representative of~$w$.

Then there exists some $n\in \Z$ such that
\begin{align*}
    U_{\alpha_0} &\subseteq \inv{b}g^{n} K_r g^{-n} b \quad\text{and} \\
    U_{-\alpha_0} &\subseteq \inv{b}g^{-n} K_r g^{n} b .
\end{align*}
\end{lem}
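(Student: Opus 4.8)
The plan is to reduce to Lemma~\ref{3.9} by finding, for a suitable power $n$ of $g$, a large enough distance between the relevant chamber and a suitable root, using that $\alpha$ is $w$-essential. First I would set $d := c_0$ and note $d = \operatorname{id} \cdot c_0$, so Lemma~\ref{3.9} (with $g$ there replaced by the identity) tells us that $bU_{\alpha_0}b^{-1} \subseteq K_r$ as soon as $\dist(c_0, -\alpha) > r$, and symmetrically $bU_{-\alpha_0}b^{-1} \subseteq K_r$ as soon as $\dist(c_0, \alpha) > r$. So the whole point is to replace $\alpha$ by an iterate under $g$ that is pushed far away from $c_0$.

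Since $\alpha$ is $w$-essential, by Definition~\ref{essentialrootsdefn} there is some $m \in \Z$ with $w^m \alpha \subsetneq \alpha$; replacing $g$ by $g^{m}$ and $w$ by $w^{m}$ (and absorbing $m$ into the final exponent $n$ at the end) we may as well assume $w\alpha \subsetneq \alpha$, i.e.\ $g\alpha \subsetneq \alpha$ as subsets of $\ch(\Delta)$ — here I use that $g$ stabilizes $A$ and represents $w$, so it permutes the roots of $A$. A strictly decreasing chain of roots $\alpha \supsetneq g\alpha \supsetneq g^2\alpha \supsetneq \cdots$ inside the apartment $A$ forces $\dist(c_0, g^{-n}\alpha) \to \infty$: indeed $g^n\alpha \subsetneq \alpha$ means $-\alpha \subsetneq -g^n\alpha = g^n(-\alpha)$, so the walls $\partial(g^n\alpha)$ recede, and in a thin apartment a nested sequence of half-apartments whose boundary walls are all distinct has the property that eventually the half-apartment contains $c_0$ deep in its interior — concretely, for $n$ large, $-g^n\alpha$ contains the whole ball $\B(c_0,r)\cap A$, equivalently $\dist(c_0, g^n\alpha) > r$; and likewise, applying the same reasoning to $g^{-1}$ and $-\alpha$ (note $g^{-1}(-\alpha)\subsetneq -\alpha$), for $n$ large $\dist(c_0, -g^{-n}\alpha) = \dist(c_0, g^{-n}(-\alpha)) > r$. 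Choose a single $n$ that works for both.

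Now apply Lemma~\ref{3.9} twice. For the first containment: let $g_0 := g^{n}$ as an element of $\Aut(\Delta)$, which fixes $c_0$ (since $g$ stabilizes $A$ but — wait) — more carefully, I should rather conjugate the root rather than move $c_0$. The clean way: the root $g^{-n}\alpha = (g^{-n}b)\alpha_0$ is a root of the apartment $g^{-n}A = g^{-n}A$; since $g$ stabilizes $A$ we have $g^{-n}A = A$, and $g^{-n}b \in \Aut(\Delta)$ maps $A_0$ to $A$ but need not fix $c_0$. To stay inside the hypotheses of Lemma~\ref{3.9} I instead take the automorphism $h := g^{-n}$, set $d := hc_0 = g^{-n}c_0 \in A$, let $b' := hb = g^{-n}b \in \Aut(\Delta)$ (so $A = b'A_0$, though $b'$ need not stabilize $c_0$) — if the precise bookkeeping of Lemma~\ref{3.9} requires $b$ to fix $c_0$, one first conjugates by an element of $\Stab(c_0)$ carrying $d$ back to $c_0$, exactly as in the proof of \cite[Lemma 3.10]{CM13}. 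Granting that bookkeeping, Lemma~\ref{3.9} applied with the root $\alpha_0$, the apartment $A$, and the element $g^{-n}$ in the role of its ``$g$'', using $\dist(d, -\alpha) = \dist(g^{-n}c_0, -\alpha) = \dist(c_0, g^{n}(-\alpha)) = \dist(c_0, -g^{n}\alpha)$; hmm — I need this $> r$, which is the second of the two conditions I arranged (with the sign of $n$ matched appropriately). Tracking signs carefully, one gets $b U_{\alpha_0} b^{-1} \subseteq g^{n} K_r g^{-n}$, i.e.\ $U_{\alpha_0} \subseteq b^{-1} g^{n} K_r g^{-n} b$, and the symmetric argument with $\alpha$ replaced by $-\alpha$ and $n$ by $-n$ gives $U_{-\alpha_0} \subseteq b^{-1} g^{-n} K_r g^{n} b$.

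The main obstacle I anticipate is precisely this sign-and-conjugation bookkeeping: matching the direction in which $g$ shrinks $\alpha$ against which ball-fixator $K_r$ one lands in, and handling the fact that $b$ or $g^{-n}b$ need not fix $c_0$ so that one must interpose an element of $\Stab_{\Aut(\Delta)}(c_0)$ as in the source argument. The geometric content — that a nested sequence of half-apartments $g^n\alpha$ eventually places $c_0$ at distance $>r$ from the wall — is routine once one observes the walls $\partial(g^n\alpha)$ are pairwise distinct (which follows from strict nesting) and that in a Coxeter complex only finitely many walls separate $c_0$ from any fixed point; beyond that it is exactly the strategy of \cite[Lemma 3.10]{CM13}, so I would simply say ``analogous to \emph{loc.\ cit.}'' for the remaining details after recording the choice of $n$.
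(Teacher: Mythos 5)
Your proposal is correct and takes essentially the same route as the paper, which simply defers to \cite[Lemma 3.10]{CM13}: use $w$-essentiality to obtain a nested sequence of roots, choose $n$ so that $\dist(c_0, g^n\alpha) > r$ and $\dist(c_0, -g^{-n}\alpha) > r$, and apply Lemma~\ref{3.9} with $g^{\pm n}$ in the role of its $g$. The extra conjugation bookkeeping you worry about is in fact unnecessary: Lemma~\ref{3.9} applies directly with the original $b \in \Stab_{\Aut(\Delta)}(c_0)$ and with $d = g^{n}c_0$ (resp.\ $d = g^{-n}c_0$), since your two arranged inequalities are precisely $\dist(g^{n}c_0, -\alpha) > r$ and $\dist(g^{-n}c_0, \alpha) > r$, which yield $bU_{\alpha_0}\inv{b} \subseteq g^{n}K_r g^{-n}$ and $bU_{-\alpha_0}\inv{b} \subseteq g^{-n}K_r g^{n}$ respectively (the intermediate sign slip with $h := g^{-n}$ resolves exactly as you indicate).
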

\begin{proof}
The proof can be copied ad verbum from \cite[Lemma 3.10]{CM13}.
\end{proof}

\subsection{Open subgroups of \texorpdfstring{$\Aut(\Delta)$}{Aut(Delta)}}\label{ss:open}

We now focus on the description of open subgroups of the automorphism group of $\Delta$.
The main result of this section will be that any proper open subgroup of the automorphism group
of a locally finite thick semi-regular right-angled building $\Delta$
is contained with finite index in the setwise stabilizer in $\Aut(\Delta)$ of a proper residue of $\Delta$
(see Theorem~\ref{th:openpropermain} below).

We will split the proof in the cases where the open subgroup is compact and non-compact.
The compact case is easy:



\begin{prop}\label{pr:openequivalent}
    Let $H$ be an open subgroup of $\Aut(\Delta)$.
    Then $H$ is compact if and only if it is a finite index subgroup of the stabilizer of a spherical residue of $\Delta$.
\end{prop}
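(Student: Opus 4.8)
The plan is to prove both implications, with the "if" direction being essentially immediate and the "only if" direction requiring the structure theory developed in the previous sections.

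First I would handle the easy direction: suppose $H$ has finite index in the stabilizer $\Stab_{\Aut(\Delta)}(\Rd)$ of a spherical residue $\Rd$. Since $\Delta$ is locally finite and $\Rd$ is spherical (hence finite, as $W_J$ is finite so $\Rd$ has finitely many chambers), the pointwise fixator of $\ch(\Rd)$ is an open subgroup of $\Aut(\Delta)$; moreover $\Stab_{\Aut(\Delta)}(\Rd)$ contains this fixator with finite index (the quotient embeds in $\Sym(\ch(\Rd))$). A standard compactness argument for t.d.l.c.\@ groups acting on locally finite buildings shows $\Stab_{\Aut(\Delta)}(\Rd)$ is compact: it stabilizes a bounded set of chambers and has open compact point stabilizers. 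A finite index subgroup of a compact group is compact, so $H$ is compact. Conversely, if $H$ is open and compact, then since $H$ is open it contains the pointwise fixator $K_r = \Fix_{\Aut(\Delta)}(\B(c_0,r))$ of some ball for $c_0 \in \ch(\Delta)$ and some $r \in \N$ (the ball fixators form a neighborhood basis of the identity, as $\Delta$ is locally finite). The compact group $H$ acts on $\ch(\Delta)$, and I would argue that a compact subgroup of $\Aut(\Delta)$ has a bounded orbit: indeed, $H$-orbits are compact, hence finite (the orbit of $c_0$ is discrete and compact), hence bounded. The convex hull of a finite $H$-invariant set of chambers in a building is a spherical residue — more precisely, a finite combinatorially convex set of chambers is contained in a unique minimal spherical residue $\Rd$, which is then $H$-invariant. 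Hence $H \leq \Stab_{\Aut(\Delta)}(\Rd)$.

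The remaining point is that this containment has finite index. Here I would use that $H$ is open: it contains $K_r$ for some $r$, and $K_r \cap \Stab_{\Aut(\Delta)}(\Rd)$ is open in the compact group $\Stab_{\Aut(\Delta)}(\Rd)$, hence of finite index; since $K_r \leq H$, the index $[\Stab_{\Aut(\Delta)}(\Rd) : H]$ divides $[\Stab_{\Aut(\Delta)}(\Rd) : K_r \cap \Stab_{\Aut(\Delta)}(\Rd)]$, which is finite. Thus $H$ has finite index in $\Stab_{\Aut(\Delta)}(\Rd)$.

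The main obstacle, I expect, is the precise statement that a finite combinatorially convex (or just $H$-invariant finite) set of chambers determines a canonical spherical residue whose stabilizer contains $H$ — one must check the residue is genuinely spherical and not merely some finite gallery-connected object, and that $\Aut(\Delta)$ indeed acts properly enough that compact orbits are finite. Both facts rely on local finiteness and on Theorem~\ref{th:caprace-main} (or rather the basic t.d.l.c.\@ structure of $\Aut(\Delta)$ for locally finite $\Delta$); the argument that stabilizers of bounded chamber-sets are compact is where the local finiteness is genuinely used. Everything else is routine topological-group bookkeeping with the neighborhood basis $\{K_r\}$.
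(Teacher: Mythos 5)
The overall skeleton (orbit of a compact open subgroup is finite, hence $H$ stabilizes a spherical residue, and then an easy index computation using $K_r\leq H$) is reasonable, but the step you single out as the crux is in fact false as stated: it is \emph{not} true that a finite combinatorially convex (or finite $H$-invariant) set of chambers is contained in a minimal spherical residue. Already in a tree, i.e.\ the right-angled building of type $W=\langle s,t\rangle$ with $m_{st}=\infty$, a minimal gallery $c_1\adj{s}c_2\adj{t}c_3$ is a finite convex set of chambers, yet the only residue containing it is the $\{s,t\}$-residue, which is the whole (non-spherical) building; the spherical residues here are just chambers and panels. So ``the convex hull of the orbit is (contained in) a spherical residue'' does not produce the residue $\Rd$ you need. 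What is true is that a compact subgroup of $\Aut(\Delta)$ stabilizes a spherical residue, but this requires a genuine fixed-point/center argument, not a convexity argument: either apply the Bruhat--Tits fixed point theorem to the CAT(0) Davis realization of $\Delta$ (where point stabilizers are exactly stabilizers of spherical residues), or simply invoke the known classification that the maximal compact open subgroups of $\Aut(\Delta)$ are precisely the stabilizers of maximal spherical residues --- which is exactly how the paper proves this proposition, by citing \cite[Proposition~4.2]{DMSS}. (In your tree example the group would of course stabilize the middle chamber $c_2$, but that is found by a center argument, not by your claimed lemma.)

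The remaining bookkeeping is correct: the ``if'' direction (a spherical residue is finite, its pointwise fixator is compact open, and the setwise stabilizer contains it with finite index, so $H$ is a finite index subgroup of a compact group), and the finite-index step in the ``only if'' direction (since $K_r\leq H\leq\Stab_{\Aut(\Delta)}(\Rd)$ and $K_r$ is open in the compact group $\Stab_{\Aut(\Delta)}(\Rd)$, it has finite index there, hence so does $H$). Once the existence of the $H$-invariant spherical residue is repaired along the lines above, your proof goes through; but as written the key geometric claim has a counterexample, whereas the paper's proof is a one-line reduction to the stated classification of maximal compact open subgroups.
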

\begin{proof}
This follows immediately from the fact that the maximal compact open subgroups of $\Aut(\Delta)$ are precisely the stabilizers of a maximal spherical residue of $\Delta$; see, for instance, \cite[Proposition 4.2]{DMSS}.
\end{proof}

\begin{quote}
    From now on, we assume that $H$ is a \textit{non-compact} open subgroup of~${\Aut(\Delta)}$.
\end{quote}

\begin{defn}\label{def:J}
We continue to use the conventions from Notation~\ref{not:A0} and
we will identify the apartment $A_0$ with $W$.
\begin{enumerate}[(i)]
    \item\label{it:na}
        Given a root $\alpha\in\Phi$, let $r_\alpha$ denote the unique reflection of $W$ setwise stabilizing the panels in $\partial\alpha$
        and let $U_\alpha$ be the root wing group introduced in Definition~\ref{def:rootwinggroup}.
        By Corollary~\ref{co:refl}, the reflection $r_\alpha \in W$ lifts to an automorphism
        $n_\alpha \in \langle U_\alpha, U_{-\alpha} \rangle \leq \Aut(\Delta)$ stabilizing $A_0$.
    \item
        For each $c \in \ch(\Delta)$ and each subset $J \subseteq S$, we write
        $\Rd_{J, c}$ for the residue of $\Delta$ of type $J$ containing $c$.
        We use the shorter notation $\Rd_J := \Rd_{J, c_0}$ when $c = c_0$.
        Moreover, we write $P_J := \Stab_{\Aut(\Delta)}(\Rd_J)$, and we call this a \emph{standard parabolic subgroup} of $\Aut(\Delta)$.
        Any conjugate of~$P_J$, i.e., any stabilizer of an arbitrary residue, is then called a \emph{parabolic subgroup}.
    \item\label{it:J}
        Let $J\subseteq S$ be minimal such that there is a $g\in {\Aut(\Delta)}$ such that $H \cap \inv{g} P_J g$ has finite index in $H$.
        In particular, $J$ is essential (see Definition~\ref{parb}\eqref{it:essential}). See also~\cite[Lemma 3.4]{CM13}.

        For such a $g$, we set $H_1 = g H \inv{g} \cap P_J$.
        Thus $H_1$ stabilizes $\Rd_J$ and it is an open subgroup of ${\Aut(\Delta)}$ contained in $gH\inv{g}$ with finite index;
        since $H$ is non-compact, so is $H_1$.
        Hence we may assume without loss of generality that $g = 1$ and hence $H_1 = H \cap P_J$ has finite index in $H$.
    \item
        Let $\app$ be the set of apartments of $\Delta$ containing $c_0$.
        For $A\in \app$ we let
        \[ N_A := \Stab_{H_1}(A) \quad \text{and} \quad W_A := N_A/\Fix_{H_1}(A) , \]
        which we identify with a subgroup of $W$.
        For $h\in N_A$, let $\overline{h}$ denote its image in $W_A \leq W$.
\end{enumerate}
\end{defn}

The idea will be to prove that $H_1$ contains a hyperbolic element $h$ such that the chamber $c_0$ achieves the minimal displacement of $h$.
Moreover, we can find the element $h$ in the stabilizer in $H_1$ of an apartment $A_1$ containing~$c_0$.
Thus we can identify it with an element $\overline{h}$ of $W$ and consider its parabolic closure (see Definition~\ref{parb}\eqref{it:parclo}).
The key point will be to prove that the type of $\Pc(\overline{h})$ is $J$, which will be achieved in Lemma~\ref{le:several}.

We will also show that $H_1$ acts transitively on the chambers of $\Rd_J$; this will allow us to conclude that any open subgroup of ${\Aut(\Delta)}$
containing $H_1$ as a finite index subgroup is contained in the stabilizer of $\Rd_{J\cup J'}$ for some spherical subset $J'$ of $J^\perp$ (Proposition~\ref{pr:main1}).

This strategy is analogous (and, of course, inspired by) \cite[Section~3]{CM13}.
As the arguments of \loccit are of a geometric nature, we will be able to adapt them to our setting.
The root groups associated with the Kac--Moody group in that paper can be replaced by the root wing groups defined in Section~\ref{ss:rootwing}.
It should not come as a surprise that many of our proofs will simply consist of appropriate references to arguments in~\cite{CM13}.

\begin{lem}\label{3.5}
For all $A \in \app$, there exists a hyperbolic automorphism $h\in N_A$ such that
\[
\Pc(\overline{h})=\langle r_\alpha \mid \alpha\text{ is an } \overline{h}\text{-essential root of } \Phi\rangle
\]
and is of finite index in $\Pc(W_A)$.
\end{lem}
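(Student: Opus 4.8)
The plan is to follow the strategy of \cite[Lemma 3.5]{CM13}, which constructs a hyperbolic element inside a group acting on a building with a prescribed behaviour on an apartment. Fix $A \in \app$. Since $H_1$ is non-compact and acts on the locally finite building $\Delta$, the stabilizer $N_A = \Stab_{H_1}(A)$ of the apartment (with the subgroup relations coming from Definition~\ref{def:J}) contains an element whose image $\overline{h} \in W_A \leq W$ has infinite order; more precisely, one first argues that $H_1$ contains a hyperbolic automorphism and that, after replacing $A$ if necessary by an apartment through $c_0$ containing a minimal-displacement chamber, such an element can be taken in $N_A$. The point of working in $N_A$ is that a hyperbolic $h \in N_A$ translates along an axis inside $A$, so its translation length is realised combinatorially and $\overline{h}$ is an infinite-order (hence "hyperbolic" in the Coxeter-group sense) element of $W$.

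**Next I would** identify the parabolic closure. The inclusion
\[
\langle r_\alpha \mid \alpha \text{ is an } \overline{h}\text{-essential root of } \Phi \rangle \subseteq \Pc(\overline{h})
\]
should follow because each $\overline{h}$-essential root $\alpha$ satisfies $\overline{h}^n \alpha \subsetneq \alpha$ for some $n$, which forces the reflection $r_\alpha$ to lie in every parabolic subgroup containing $\overline{h}$ (this is exactly the point where one invokes the structure theory of parabolic closures of infinite-order elements in Coxeter groups, as used in \cite{CM13}; one may cite \cite[Lemma 2.4]{CM13}, i.e.\ Lemma~\ref{lem2.4} here, together with standard facts about walls crossed by the axis of $\overline{h}$). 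For the reverse inclusion, the group on the left is itself a reflection subgroup of $W$ and it contains $\overline{h}$ — because the walls crossed infinitely often by the axis of $\overline{h}$ are precisely the $\overline{h}$-essential walls, and $\overline{h}$ is a product of the corresponding reflections up to an element acting trivially — so any parabolic subgroup containing $\overline{h}$ contains this reflection subgroup's parabolic closure, which one checks equals the reflection subgroup itself since it is generated by reflections whose walls form an $\overline{h}$-invariant system. Hence the two subgroups coincide and equal $\Pc(\overline{h})$.

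**For the finite-index claim**, I would argue that $\Pc(W_A)$ is itself generated (up to finite index) by the reflections $r_\alpha$ with $\alpha$ a wall crossed by $W_A$, and that among these only finitely many $W_A$-orbits of walls fail to be $\overline{h}$-essential; since $W_A$ is finitely generated (it is a subgroup of the finitely generated $W$, or one argues via the finitely many panels in a fundamental domain) and $\overline{h}$ has finite index in a finite-rank free abelian subgroup of $W_A$ acting cocompactly on its axis, only finitely many walls near the axis are relevant and the non-$\overline{h}$-essential ones contribute a finite-index defect. Concretely this is the same bookkeeping as in \loccit: $\langle r_\alpha : \alpha \text{ is } \overline{h}\text{-essential}\rangle$ has finite index in $\langle r_\alpha : \alpha \text{ is any wall crossed by } W_A\rangle$, and the latter has finite index in $\Pc(W_A)$, giving the result.

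**The main obstacle** I anticipate is the very first step: producing a hyperbolic element of $H_1$ lying in the stabilizer of a suitable apartment through $c_0$. This is where non-compactness of $H_1$ must be converted into genuinely hyperbolic dynamics, and where one needs that $\Delta$ is locally finite (so that $H_1$, being non-compact, cannot fix a bounded set — otherwise it would be contained in a compact stabilizer, contradicting Proposition~\ref{pr:openequivalent} applied to the open subgroup $H$). Making this rigorous requires the standard dichotomy (elliptic vs.\ hyperbolic) for isometries of CAT(0) spaces or, combinatorially, an argument that a non-compact group acting on a locally finite building contains a hyperbolic element — this is precisely the content one imports from \cite{CM13} (and ultimately from the general theory), and the adaptation to right-angled buildings is where the root wing groups $U_\alpha$ of Section~\ref{ss:rootwing} replace the algebraic root groups of \loccit. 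Beyond that, the argument is a careful transcription, and I would be explicit that ``the proof is analogous to \cite[Lemma 3.5]{CM13}'' only after spelling out the three inclusions above.
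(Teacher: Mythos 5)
Your opening step does not prove the statement as quantified, and it misplaces where non-compactness enters the paper's scheme. The lemma is needed for \emph{every} $A\in\app$: it is fed (through the arguments of \cite{CM13} behind Lemma~\ref{3.12} and Lemma~\ref{le:several}) into the analysis of an arbitrary apartment, and the special apartment $A_1$ is selected only afterwards, in Definition~\ref{def:I}, using the separate Lemma~\ref{3.7}. You instead extract a hyperbolic element of $H_1$ from non-compactness and then allow yourself to ``replace $A$ if necessary by an apartment through $c_0$ containing a minimal-displacement chamber''; this changes the quantifier, so what you sketch is the content of Lemma~\ref{3.7} (and of Definition~\ref{def:I}), not of the present lemma. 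In the paper, as in \cite[Lemma~3.5]{CM13}, the apartment $A$ is fixed and arbitrary and non-compactness of $H_1$ plays no role here: the argument is Coxeter-theoretic, applied to the subgroup $W_A\leq W$. One chooses, using the parabolic-closure results of \cite[Section~2]{CM13}, an element of $W_A$ whose parabolic closure has finite index in $\Pc(W_A)$, adjusts it so that its parabolic closure is exactly the subgroup generated by its essential reflections, and lets $h\in N_A$ be any representative; the only building-theoretic ingredient to import is that reflections lift to elements $n_\alpha\in\langle U_\alpha,U_{-\alpha}\rangle$ stabilizing the apartment (Corollary~\ref{co:refl}, Definition~\ref{def:J}), which your sketch never uses. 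The ``main obstacle'' you single out (converting non-compactness into hyperbolic dynamics) simply is not part of this lemma.

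Even for your chosen apartment, neither displayed property is established. For the equality $\Pc(\overline{h})=\langle r_\alpha\mid\alpha\ \overline{h}\text{-essential}\rangle$, your reverse inclusion rests on the claim that $\overline{h}$ is a product of essential reflections ``up to an element acting trivially''; for a general infinite-order element this is false, and the equality genuinely requires a further choice (e.g.\ passing to a suitable power). For instance, for the right-angled system $S=\{a,b,c\}$ with $m_{ab}=\infty$ and $m_{ac}=m_{bc}=2$, the element $w=abc$ has $\langle r_\alpha\mid\alpha\ w\text{-essential}\rangle=\langle a,b\rangle$, whereas $\Pc(w)=W$; only $w^2$ satisfies the stated equality. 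This is not cosmetic: later on it is the equality (not merely finite index) that is combined with Lemma~\ref{3.10} and the lifts $n_\alpha$ to place a parabolic subgroup inside $W_A$. Likewise, your treatment of the finite-index claim (counting ``walls crossed by $W_A$'', a free abelian subgroup acting cocompactly on the axis) is heuristic and does not substitute for the key input from \cite[Section~2]{CM13}: every subgroup of $W$ contains an element whose parabolic closure has finite index in the parabolic closure of the subgroup. Applied to $W_A$, this is what produces $\overline{h}$ with $\Pc(\overline{h})$ of finite index in $\Pc(W_A)$; your sketch never isolates this step, and without it the finite-index assertion has no proof.
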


\begin{proof}
    Using the fact that the reflections $r_\alpha$ lift to elements $n_\alpha \in \langle U_\alpha, U_{-\alpha} \rangle$
    (see Definition~\ref{def:J}\eqref{it:na}),
    the proof is the same as for~\cite[Lemma~3.5]{CM13}.
    Notice that by \cite[Lemma~2.7]{CM13}, the type of the parabolic subgroup $\Pc(\overline{h})$ is always essential
    (in the sense of Definition~\ref{parb}\eqref{it:essential}).
\end{proof}


\begin{lem}\label{3.7}
There exists an apartment $A \in \app$ such that the orbit $N_A.c_0$ is unbounded.
In particular, the parabolic closure in $W$ of $W_A$ is non-spherical.
\end{lem}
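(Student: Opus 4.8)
The idea is to leverage the non-compactness of $H_1$. Recall that $H_1 = H \cap P_J$ is a non-compact open subgroup of $\Aut(\Delta)$ stabilizing the residue $\Rd_J$, where $J$ is essential and in particular non-spherical (since $J \neq \emptyset$ because $H$ is non-compact, and every irreducible component of $J$ is non-spherical). The plan is to show that for \emph{some} apartment $A \in \app$, the orbit $N_A.c_0 = \Stab_{H_1}(A).c_0$ is unbounded, and then deduce from the unboundedness that $\Pc(W_A)$ cannot be finite, hence cannot be spherical.

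First I would argue that $H_1.c_0$ itself is unbounded. Indeed, if $H_1.c_0$ were bounded, say contained in $\B(c_0, m)$, then every element of $H_1$ would map $c_0$ into $\B(c_0, m)$; combined with the fact that $H_1$ is open (hence contains the fixator $K_r$ of some ball $\B(c_0, r)$ for $r$ large enough, and we may take $r \geq m$), a standard compactness/bounded-orbit argument shows $H_1$ would be contained in a bounded subgroup and hence, by local finiteness, compact --- contradicting non-compactness of $H_1$. So $H_1.c_0$ is unbounded: there are chambers $h_i.c_0$ with $\dw(c_0, h_i.c_0) \to \infty$. Since $H_1$ stabilizes $\Rd_J$ and $c_0 \in \Rd_J$, all these chambers lie in $\Rd_J$, which is consistent with $J$ being non-spherical (a spherical residue is bounded).

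Next I would pass to an apartment. For each $i$, pick an apartment $A_i \in \app$ containing both $c_0$ and $h_i.c_0$ (which exists: any two chambers lie in a common apartment, and one can choose it through the fixed chamber $c_0$ — this uses strong transitivity of $\Aut(\Delta)$, Theorem~\ref{th:caprace-main}, applied to move a standard apartment into position). There are only finitely many apartments in $\app$ through $c_0$ meeting any fixed ball $\B(c_0, d)$? No --- that need not hold, so instead I would argue as follows: for each $i$ there is $h_i \in H_1$ with $h_i.c_0$ far from $c_0$; choose an apartment $A_i$ containing $c_0$ and $h_i.c_0$, and let $g_i \in \Aut(\Delta)$ with $g_i.A_0 = A_i$ fixing $c_0$. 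Then $g_i^{-1} h_i g_i$ moves $c_0$ far in the fixed apartment $A_0$. The subtlety — and I expect this to be the \textbf{main obstacle} — is that a priori the element $h_i$ need not stabilize $A_i$, so we do not directly get elements of $N_{A_i}$ with large displacement of $c_0$. The resolution, following the geometry of \cite[Section~3]{CM13}, is to use that $H_1$ is open (contains a ball fixator $K_r$) together with the already-established fixed-point set result (Theorem~\ref{th:fixedpointset}) and the transitivity of root wing groups on apartments (Proposition~\ref{pr:trans-apt}): one modifies $h_i$ by an element of $K_r \leq H_1$ that maps $h_i.A_0$ back onto an apartment $A$ chosen in advance, so that the modified element stabilizes $A$ while still displacing $c_0$ by a controlled (large) amount. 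Since there are finitely many apartments through $c_0$ within any bounded region and the displacements are unbounded, a pigeonhole argument produces a single apartment $A$ for which $N_A.c_0$ is unbounded.

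Finally, for the "in particular" clause: if $N_A.c_0$ is unbounded, then $W_A \leq W$ (the image of $N_A$ acting on $A \cong \Delta_\Sigma$) has unbounded orbit on the chambers of $A_\Sigma = W$, so $W_A$ is an infinite subgroup of $W$; therefore $\Pc(W_A)$, its parabolic closure, is infinite. An infinite parabolic subgroup of a Coxeter group is non-spherical (a spherical parabolic $W_{J'}$ is finite by definition), so $\Pc(W_A)$ is non-spherical, as claimed. I would present the displacement-in-a-fixed-apartment step carefully, as that is where the openness hypothesis and the earlier structural results (Theorem~\ref{th:fixedpointset}, Proposition~\ref{pr:trans-apt}) must all be brought to bear, exactly mirroring the argument of \cite[Lemma~3.7]{CM13}.
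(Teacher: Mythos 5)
Your overall plan is in the right spirit and matches the paper's, which simply imports the proofs of \cite[Lemmas~3.6 and~3.7]{CM13} verbatim: the non-compactness of $H_1$ is used to show that the orbit $H_1.c_0$ is unbounded (your argument here is correct: a bounded orbit would put $H_1$ inside finitely many cosets of the compact chamber stabilizer, and $H_1$ is closed since it is open), and your ``in particular'' deduction is also fine (an unbounded orbit $N_A.c_0\subseteq A$ means $W_A$ contains elements of arbitrarily large length, hence $\Pc(W_A)\supseteq W_A$ is infinite and therefore non-spherical).

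The genuine gap is the middle step --- producing a \emph{single} apartment $A\in\app$ such that $N_A=\Stab_{H_1}(A)$ moves $c_0$ unboundedly --- which you yourself flag as the main obstacle but then only assert, and the two concrete devices you sketch do not work as stated. First, an element $k\in K_r$ fixes $\B(c_0,r)$ pointwise, so it can only carry $h_i.A_0$ to an apartment with the same trace on $\B(c_0,r)$; it cannot map $h_i.A_0$ onto an apartment ``chosen in advance'' (and $h_i.A_0$ need not even meet $\B(c_0,r)$). Moreover, even granting such a $k$, the element $kh_i$ maps $A_0$ onto $A$, which is not the same as stabilizing an apartment through $c_0$; to land in some $N_A$ one must correct on both sides or conjugate, while keeping the correcting elements inside $H_1$ and controlling the displacement. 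Second, your pigeonhole is applied to a set that is not finite: there are infinitely many apartments through $c_0$, and only their traces on a fixed ball form a finite set; knowing that infinitely many $A_i$ share a trace does not let you transport the elements $h_i\in N_{A_i}$ into the stabilizer of one apartment. For that one would need, for instance, elements of $H_1$ fixing large balls and mapping $A_i$ onto a limit apartment (a transitivity statement on apartments with prescribed trace on a ball, which is not what Proposition~\ref{pr:trans-apt} provides --- that result concerns apartments containing a common root), combined with a limiting argument over the compact set $\app$. This missing transfer step is exactly the content of \cite[Lemma~3.6]{CM13}, which the paper's proof invokes wholesale together with \cite[Lemma~3.7]{CM13}; without reproducing that argument (or explicitly citing it as the paper does), your proposal does not establish the lemma.
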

\begin{proof}
    The proofs of \cite[Lemmas 3.6 and 3.7]{CM13} continue to hold without a single change.
    Notice that this depends crucially on the fact that $H_1$ is non-compact.
\end{proof}


\begin{defn}\phantomsection\label{def:I}
\begin{enumerate}[\rm (i)]
    \item\label{it:A1}
        Let $A_1 \in \app$ be an apartment such that the essential component of $\Pc(W_{A_1})$ is non-empty and maximal with respect to this property
        (see Definition~\ref{parb}\eqref{it:essential});
        such an apartment exists by Lemma~\ref{3.7}.
        Choose $h_1\in N_{A_1}$ as in Lemma~\ref{3.5}.
        In particular, $h_1$ is a hyperbolic element of $H_1$.
    \item\label{it:I}
        Up to conjugating $H_1$ by an element of $\Stab_{\Aut(\Delta)}(\Rd_{J})$, we can assume without loss of generality that $\Pc(\overline{h_1})$
        is a standard parabolic subgroup that is non-spherical and has essential type $I$ $(\neq\emptyset)$.
        Moreover, the type $I$ is maximal in the following sense: if $A \in \app$ is such that $\Pc(W_A)$ contains a parabolic subgroup of essential type $I_A$
        with $I\subseteq I_A$, then $I=I_A$.
\end{enumerate}
\end{defn}

\begin{defn}
Recall that $\Phi$ is the set of roots of the apartment $A_0$.
For each $T\subseteq S$, let
\[ \Phi_T := \{ \alpha \in \Phi \mid \Rd_T \text{ contains at least one panel of } \partial\alpha \} \]
and
\[ L^+_T := \langle U_\alpha \mid \alpha \in \Phi_T\rangle , \]
where $U_\alpha$ is the root wing group introduced in Definition~\ref{def:rootwinggroup}.
\end{defn}

Our next goal is to prove that $H_1$ contains $L^+_J$, where $J$ is as in Definition~\ref{def:J}\eqref{it:J};
as we will see in Lemma~\ref{3.11} below, this fact is equivalent to $H_1$ being transitive on the chambers of $\Rd_{J}$.

We will need the results in Section~\ref{ss:rootwing} regarding fixators of balls and root wing groups.

\begin{notn}
    Since $H_1$ is open, we fix, for the rest of the section, some $r\in \N$ such that $\Fix_{\Aut(\Delta)}(\B(c_0, r)) \subseteq H_1$.
\end{notn}

The next lemma corresponds to \cite[Lemma 3.11]{CM13}, but some care is needed because of our different definition of the groups $U_\alpha$.
\begin{lem}\label{3.11}
Let $T\subseteq S$ be essential and let $A \in \app$.
Then the following are equivalent:
\begin{enumerate}[\rm (a)]
\item\label{it:ess-a}  $H_1$ contains $L^+_T$;
\item\label{it:ess-b}  $H_1$ is transitive on $\Rd_{T}$;
\item\label{it:ess-c}  $N_A$ is transitive on $\Rd_{T}\cap A$;
\item\label{it:ess-d}  $W_A$ contains the standard parabolic subgroup $W_T$ of $W$.
\end{enumerate}
\end{lem}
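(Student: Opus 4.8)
The plan is to prove the chain of implications (a) $\Rightarrow$ (b) $\Rightarrow$ (c) $\Rightarrow$ (d) $\Rightarrow$ (a), following the skeleton of \cite[Lemma~3.11]{CM13} but taking care that the root wing groups $U_\alpha$ play the role that root subgroups play there. The ingredients I would collect first are: Proposition~\ref{pr:trans-apt} (each $U_\alpha$ is transitive on the apartments through $\alpha$), Corollary~\ref{co:refl} (the reflection $r_\alpha$ lifts into $\langle U_\alpha, U_{-\alpha}\rangle$), Lemma~\ref{3.8} and Lemma~\ref{3.9}, and the observation that $\Fix_{\Aut(\Delta)}(\B(c_0,r)) \subseteq H_1$. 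I would also recall that the chambers of $\Rd_T$ (for $T$ essential) are exactly reached from $c_0$ by galleries of type a word in $T$, and that $\Rd_T \cap A$ is an apartment-residue isomorphic to the Coxeter complex of $W_T$.

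For \textbf{(a) $\Rightarrow$ (b)}: since $T$ is essential, any chamber of $\Rd_T$ can be joined to $c_0$ by a minimal gallery inside $\Rd_T$; I would show inductively that each wing group $U_\alpha$ with $\alpha \in \Phi_T$, together with its opposite $U_{-\alpha}$, lets us move $c_0$ across any panel of $\partial\alpha$ inside $\Rd_T$ (using $n_\alpha \in \langle U_\alpha, U_{-\alpha}\rangle$ from Corollary~\ref{co:refl}), so $L^+_T$ already acts transitively on $\ch(\Rd_T)$, hence so does $H_1 \supseteq L^+_T$. For \textbf{(b) $\Rightarrow$ (c)}: given $A \in \app$ and two chambers $c, c' \in \Rd_T \cap A$, transitivity of $H_1$ gives $h \in H_1$ with $hc = c'$; the subtlety is that $h$ need not stabilize $A$, so I would post-compose with an element of $\Fix_{\Aut(\Delta)}(\B(c_0,r)) \cap \dots$ — more precisely, use that the stabilizer of $c_0$ in $\Aut(\Delta)$ (and in particular its intersection with $H_1$, which has finite relevant index via the ball fixator) acts transitively on apartments through $c_0$ (strong transitivity, Theorem~\ref{th:caprace-main}), so we can correct $h$ to land inside $\Stab(A)$; this is exactly the argument in \loccit and goes through verbatim once the transitivity input is in place. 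The implication \textbf{(c) $\Leftrightarrow$ (d)} is essentially formal: $N_A$ maps onto $W_A \leq W$ and $\Rd_T \cap A$ is identified with the thin residue $W_T$ inside the Coxeter complex, so $N_A$ is transitive on $\Rd_T \cap A$ if and only if $W_A \supseteq W_T$.

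The genuinely delicate implication is \textbf{(d) $\Rightarrow$ (a)}, i.e.\ producing all the root wing groups $U_\alpha$ with $\alpha \in \Phi_T$ inside $H_1$ from the combinatorial hypothesis $W_A \supseteq W_T$. Here I would argue as in \cite[Lemma 3.11]{CM13}: for a fixed $s \in T$, pick the reflection $r_s \in W_T \subseteq W_A$; it lifts (via $N_A$ and then Corollary~\ref{co:refl}) to an automorphism stabilizing $A$, and conjugating the ball-fixator $\Fix_{\Aut(\Delta)}(\B(c_0,r)) \subseteq H_1$ by suitable powers of a hyperbolic element of $N_A$ — combined with Lemma~\ref{3.8}, Lemma~\ref{3.9} and Lemma~\ref{3.10} — drags in the wing group $U_\alpha$ for roots $\alpha$ whose wall lies far from $c_0$; one then uses the $W_T$-action (reflections $r_t$, $t \in T$) to translate these "far" wing groups onto every $\alpha \in \Phi_T$, including those with panels close to $c_0$. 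The point requiring care, compared with \loccit, is that our $U_\alpha = \Fix_{\Aut(\Delta)}(X_s(c))$ is defined via a wing rather than as an abstract root group, so I must check that the conjugation formulas in Lemmas~\ref{3.9} and~\ref{3.10} genuinely send a wing group to a wing group — but this is precisely what those lemmas are stated to provide, so the adaptation is bookkeeping rather than a new idea. I expect the main obstacle to be organizing the induction/translation step in (d) $\Rightarrow$ (a) so that every root of $\Phi_T$ is covered, which is where essentiality of $T$ (guaranteeing that $W_T$ is infinite in each component, so that the hyperbolic-element conjugation trick reaches arbitrarily far walls) is used crucially.
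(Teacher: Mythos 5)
There is a genuine gap at the step (b) $\Rightarrow$ (c). Given $c_1 \in \Rd_T \cap A$ and $h \in H_1$ with $hc_0 = c_1$, every element of $H_1$ sending $c_0$ to $c_1$ has the form $hk$ with $k \in \Stab_{H_1}(c_0)$, so finding an element of $N_A = \Stab_{H_1}(A)$ doing the job amounts to finding $k \in \Stab_{H_1}(c_0)$ with $kA = h^{-1}A$, i.e.\ to showing that the two apartments $A$ and $h^{-1}A$ through $c_0$ lie in the same $\Stab_{H_1}(c_0)$-orbit. Strong transitivity (Theorem~\ref{th:caprace-main}) only produces such a $k$ in $\Aut(\Delta)$, not in $H_1$, and your fallback via the ball fixator does not repair this: $K_r = \Fix_{\Aut(\Delta)}(\B(c_0,r))$ has finite index in the compact group $\Stab_{\Aut(\Delta)}(c_0)$, which yields only finitely many orbits on the infinite set of apartments through $c_0$; worse, $K_r$ fixes $\B(c_0,r)$ pointwise, so it can never move an apartment whose trace on $\B(c_0,r)$ differs from that of $A$ onto $A$. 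The only supply of elements of $H_1$ that fix $c_0$ and move apartments across a prescribed wall is precisely the wing groups $U_\alpha \subseteq H_1$ --- that is, statement (a). So a correct proof of (b) $\Rightarrow$ (c) has to pass through (a), and your cycle, as organized, breaks (and your claim that this correction ``is exactly the argument in \cite{CM13}'' does not hold; no such correction step is available there either).

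For comparison, the paper closes the cycle differently: (c) $\Leftrightarrow$ (d) is formal (as you say), the implications (d) $\Rightarrow$ (b) $\Rightarrow$ (a) are imported verbatim from \cite[Lemma~3.11]{CM13} (this is where essentiality of $T$ and the Lemma~\ref{3.8}/\ref{3.9}-type conjugation of ball fixators enter, roughly as in your (d) $\Rightarrow$ (a) sketch), and the genuinely new content is a direct proof of (a) $\Rightarrow$ (c): for $c_1 \in A$ $s$-adjacent to $c_0$, Proposition~\ref{pr:trans-apt} gives $g \in U_\alpha$ fixing $c_0$ and mapping $c_1$ to the chamber $c_2$ of $A_0$, and then $g^{-1} n_\alpha g \in N_A$ interchanges $c_0$ and $c_1$. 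Note that the same conjugation trick is also what your (a) $\Rightarrow$ (b) step really needs: in a thick building $n_\alpha$ alone only reaches the second chamber of the panel lying in $A_0$, so ``moving across a panel'' already requires combining $n_\alpha$ with Proposition~\ref{pr:trans-apt}; since you list that proposition among your ingredients this is presentational, but the (b) $\Rightarrow$ (c) step is a substantive gap that forces a reorganization of your chain along the lines of the paper.
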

\begin{proof}
It is clear that \eqref{it:ess-c} and \eqref{it:ess-d} are equivalent.

We first show that \eqref{it:ess-a} implies \eqref{it:ess-c}.
It suffices to show that for each chamber $c_1$ of $A$ that is $s$-adjacent to $c_0$ for some $s \in T$,
there is an element of $N_A$ mapping $c_0$ to $c_1$.
Let $\alpha$ be the root of $A_0$ containing $c_0$ but not the chamber $c_2$ in $A_0$ that is $s$-adjacent to $c_0$;
notice that $U_\alpha$ and $U_{-\alpha}$ are contained in $L^+_T$.
By Proposition~\ref{pr:trans-apt}, there is some $g \in U_\alpha$ fixing $c_0$ and mapping $c_1$ to $c_2$.
Now the element $n_\alpha \in \langle U_\alpha, U_{-\alpha} \rangle$ stabilizes $A_0$ and interchanges $c_0$ and $c_2$;
it follows that the conjugate $g^{-1} n_\alpha g$ stabilizes $A$ and interchanges $c_0$ and $c_1$, as required.

The proofs of the implications \eqref{it:ess-d}~$\Rightarrow$~\eqref{it:ess-b}~$\Rightarrow$ \eqref{it:ess-a} are exactly as in \cite[Lemma~3.11]{CM13}.
\end{proof}

The next statement is the analogue of \cite[Lemma 3.12]{CM13}.

\begin{lem}\label{3.12}
Let $A \in \app$.
There exists $I_A\subseteq S$ such that $W_A$ contains a parabolic subgroup $P_{I_A}$ of $W$ of type $I_A$ as a finite index subgroup.
\end{lem}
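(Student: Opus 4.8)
The plan is to follow the geometric argument of \cite[Lemma~3.12]{CM13}, using the tools developed in the previous sections — in particular, Lemma~\ref{3.5} (the existence of a hyperbolic element $h$ in $N_A$ whose parabolic closure $\Pc(\overline h)$ has finite index in $\Pc(W_A)$) and Lemma~\ref{lem2.4} (finite index is preserved under parabolic closure). First I would invoke Lemma~\ref{3.5} to obtain a hyperbolic automorphism $h \in N_A$ with $\Pc(\overline h) = \langle r_\alpha \mid \alpha \text{ is } \overline h\text{-essential}\rangle$ of finite index in $\Pc(W_A)$. By \cite[Lemma~2.7]{CM13} (cited in the proof of Lemma~\ref{3.5}), the type of $\Pc(\overline h)$ is essential; call it $I$ (not to be confused with the global $I$ from Definition~\ref{def:I}). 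Since $\Pc(W_A)$ is a parabolic subgroup of $W$, it is conjugate to some standard $W_{K}$, and $\Pc(\overline h) \leq \Pc(W_A)$ is a finite index parabolic subgroup of it.

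Next I would argue that $W_A$ itself contains a finite-index parabolic subgroup. The point is that $W_A \leq \Pc(W_A)$, and $\Pc(W_A)$ is a Coxeter group in its own right with a canonical parabolic structure; moreover $\overline{h} \in W_A$ generates (together with finitely many conjugating elements, all of which lie in $W_A$ since $h, n_\alpha \in N_A$ for the relevant roots $\alpha$) the parabolic subgroup $\Pc(\overline h)$. Thus $\Pc(\overline h) \leq W_A \leq \Pc(W_A)$, and since $[\Pc(W_A):\Pc(\overline h)] < \infty$ we get $[\Pc(W_A):W_A] < \infty$ as well. Now $W_A$ is a subgroup of the Coxeter group $\Pc(W_A)$ of finite index containing the parabolic subgroup $\Pc(\overline h)$; by Lemma~\ref{lem2.4}, $\Pc(W_A) = \Pc(\Pc(W_A))$ has finite index over $\Pc(W_A \cap W_A)\,$—more to the point, one applies Lemma~\ref{lem2.4} directly with $H_1 = \Pc(\overline h)$ and $H_2 = W_A$ to conclude $\Pc(\Pc(\overline h)) = \Pc(\overline h)$ has finite index in $\Pc(W_A)$, consistent with the above, and one checks that $\Pc(\overline h)$ is already the parabolic subgroup of $W$ we seek: setting $I_A$ to be its type, $W_T$-type essential parabolic $P_{I_A} := \Pc(\overline h)$ sits inside $W_A$ with finite index because $[\Pc(W_A):\Pc(\overline h)]<\infty$ forces $[W_A:\Pc(\overline h)\cap W_A] = [W_A:\Pc(\overline h)] < \infty$.

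The main obstacle I anticipate is the bookkeeping around \emph{which} group the parabolic closures are taken in — $W$ versus $\Pc(W_A)$ — and making sure the conjugating elements that realize $\Pc(\overline h)$ as a parabolic of $W$ are genuinely inside $W_A$ (equivalently, that the corresponding automorphisms stabilize $A$). This is exactly where the structure of $N_A$ and the fact that $n_\alpha \in \langle U_\alpha, U_{-\alpha}\rangle$ stabilizes the relevant apartment (Definition~\ref{def:J}\eqref{it:na} and Corollary~\ref{co:refl}) are used; once this is in place, the rest is a direct transcription of \cite[Lemma~3.12]{CM13}, so the proof can reasonably be written as ``The proof is identical to that of \cite[Lemma~3.12]{CM13}, using Lemma~\ref{3.5} and Lemma~\ref{lem2.4} in place of the corresponding statements there,'' with a remark spelling out the finite-index chain $\Pc(\overline h) \leq W_A \leq \Pc(W_A)$ and $[\Pc(W_A):\Pc(\overline h)] < \infty$.
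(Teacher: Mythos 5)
The skeleton you propose (Lemma~\ref{3.5} plus a finite-index chain $\Pc(\overline{h})\leq W_A\leq \Pc(W_A)$) is the right shape, but the step you pass over in one clause --- that $\Pc(\overline{h})\leq W_A$ because ``$h, n_\alpha\in N_A$ for the relevant roots $\alpha$'' --- is precisely the non-trivial content of the lemma, and as written it is unjustified. The elements $n_\alpha$ live in $\langle U_\alpha, U_{-\alpha}\rangle\leq\Aut(\Delta)$; nothing so far places them in $H_1$, hence nothing places the reflections $r_\alpha$ in $W_A=N_A/\Fix_{H_1}(A)$. The containment cannot follow from bookkeeping with parabolic closures alone: if $W$ is infinite dihedral and $W_A$ were the cyclic group generated by a translation $\overline{h}$, then $\Pc(\overline{h})=W=\Pc(W_A)$, so the finite-index conclusion of Lemma~\ref{3.5} holds, yet $W_A$ contains no reflection and therefore no parabolic subgroup of finite index. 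So the lemma genuinely needs the openness of $H_1$, which your sketch never uses; your intermediate appeal to Lemma~\ref{lem2.4} is moreover circular, since it presupposes the finite-index containment $\Pc(\overline{h})\leq W_A$ you are trying to establish.

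The missing ingredient is Lemma~\ref{3.10}, combined with the choice of $r$ such that $K_r=\Fix_{\Aut(\Delta)}(\B(c_0,r))\subseteq H_1$ (this is where openness enters) and the fact that the hyperbolic representative $h$ of $\overline{h}$ lies in $N_A\leq H_1$. For each $\overline{h}$-essential root $\alpha=b\alpha_0$ of $A$, Lemma~\ref{3.10} with $g=h$ gives $U_{\alpha_0}\subseteq \inv{b}h^{n}K_r h^{-n}b$ and $U_{-\alpha_0}\subseteq \inv{b}h^{-n}K_r h^{n}b$ for some $n$, i.e.\ $U_{\alpha}=bU_{\alpha_0}\inv{b}\subseteq h^{n}K_rh^{-n}\subseteq H_1$ and likewise $U_{-\alpha}\subseteq H_1$. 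Then the lift of $r_\alpha$ provided by Corollary~\ref{co:refl}, which lies in $\langle U_\alpha,U_{-\alpha}\rangle$ and stabilizes $A$, belongs to $N_A$, so $r_\alpha\in W_A$. Only now does Lemma~\ref{3.5} give $\Pc(\overline{h})=\langle r_\alpha\mid \alpha \text{ $\overline{h}$-essential}\rangle\leq W_A\leq\Pc(W_A)$ with $[\Pc(W_A):\Pc(\overline{h})]<\infty$, whence $[W_A:\Pc(\overline{h})]<\infty$ and one may take $P_{I_A}:=\Pc(\overline{h})$. This is exactly how the argument of \cite[Lemma~3.12]{CM13} runs, which is why the paper can cite it verbatim; your write-up should make the use of Lemma~\ref{3.10} and of $K_r\subseteq H_1$ explicit rather than asserting $n_\alpha\in N_A$.
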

\begin{proof}
The proof can be copied ad verbum from~\cite[Lemma 3.12]{CM13}.
%
%
\end{proof}

For each $A \in \app$, we fix such an $I_A\subseteq S$; without loss of generality, we may assume that $I_A$ is essential.
We also consider the corresponding parabolic subgroup $P_{I_A}$ contained in $W_A$.
Observe that $P_{I_{A_1}}$ has finite index in $\Pc(W_{A_1})$ by Lemma~\ref{lem2.4}, where $A_1$ is as in Definition~\ref{def:I}\eqref{it:A1}.
Therefore $I=I_{A_1}$.

The next task in the process of showing that $H_1$ contains $L^+_J$ is to prove that $J=I$,
which is achieved by the following sequence of steps, each of which follows from the previous ones and which are analogues of results in \cite{CM13}.
\begin{lem}\label{le:several}
Let $A \in \app$ and let $I$ and $J$ be as in Definition~\ref{def:I}\eqref{it:I} and~\ref{def:J}\eqref{it:J}, respectively. Then:
\begin{enumerate}[\rm (i)]
    \item $H_1$ contains $L^+_I$;
    \item $I_A\subset I$;
    \item $W_A$ contains $W_I$ as a subgroup of finite index;
    \item $I=J$.
\end{enumerate}
\end{lem}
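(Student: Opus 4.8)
The four assertions are designed to be proved in cascade, each feeding the next, closely mirroring \cite[Lemmas 3.13--3.16]{CM13}. I will carry them out in the stated order.

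\emph{Part (i): $H_1$ contains $L^+_I$.} By the equivalence in Lemma~\ref{3.11} (applied with $T = I$ and $A = A_1$), it suffices to show that $W_{A_1}$ contains the standard parabolic subgroup $W_I$ of $W$. Now $\Pc(\overline{h_1})$ is, by Definition~\ref{def:I}\eqref{it:I}, a standard parabolic subgroup of non-spherical essential type $I$, and it is contained in $\Pc(W_{A_1})$ and has finite index in it by Lemma~\ref{3.5}. The point is to upgrade ``$W_I \leq \Pc(W_{A_1})$ up to finite index'' to ``$W_I \leq W_{A_1}$ on the nose''. This is exactly the content of the argument behind \cite[Lemma~3.13]{CM13}: one uses Lemma~\ref{3.10} (the FPRS-type statement) applied to the hyperbolic element $h_1$ and its $\overline{h_1}$-essential roots to show that $H_1$ contains $U_{\alpha_0}$ and $U_{-\alpha_0}$ for every $\overline{h_1}$-essential root $\alpha_0$, hence $H_1 \supseteq \langle U_\alpha, U_{-\alpha}\mid \alpha \text{ is }\overline{h_1}\text{-essential}\rangle$; since the $r_\alpha$ for $\overline{h_1}$-essential $\alpha$ generate a parabolic of type $I$, Lemma~\ref{3.11}\eqref{it:ess-d}$\Rightarrow$\eqref{it:ess-a} gives $L^+_I \subseteq H_1$. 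The translation to our setting is routine once the root wing groups are used in place of the algebraic root groups, exactly as announced in the paragraph preceding Lemma~\ref{3.5}.

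\emph{Parts (ii) and (iii): $I_A \subseteq I$ and $W_A$ contains $W_I$ with finite index.} For (ii), one argues by maximality of $I$: given $A \in \app$, the type $I_A$ comes with a parabolic $P_{I_A} \leq W_A$ of finite index in $W_A$ (Lemma~\ref{3.12}). Using Part (i) together with Lemma~\ref{3.11} one shows $N_A$ acts on enough of $\Rd_I \cap A$ to force the essential type of $\Pc(W_A)$ to contain a conjugate of $W_I$; the maximality clause in Definition~\ref{def:I}\eqref{it:I} then yields $I_A \subseteq I$. For (iii): since $I_A \subseteq I$ and (by Part (i) and Lemma~\ref{3.11} applied with $T=I$) $W_A \supseteq W_I$, while $P_{I_A} \leq W_A$ has finite index, a comparison of parabolic closures via Lemma~\ref{lem2.4} and Lemma~\ref{lem2.4}'s finite-index transfer forces $W_I$ to sit in $W_A$ with finite index. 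Concretely this is \cite[Lemmas 3.14--3.15]{CM13}, which transcribe verbatim.

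\emph{Part (iv): $I = J$.} One inclusion is easy: since $L^+_I \subseteq H_1$ (Part (i)) and $L^+_I$ together with $K_r$ generate a group acting transitively on $\Rd_I$ (Lemma~\ref{3.11}), the group $H_1$ stabilizes $\Rd_I$ and acts transitively on it, so $H_1 \cap P_I$ has finite index in $H_1$ (it equals the stabilizer of $c_0$ inside the transitive action, intersected appropriately, and the panels have finite cardinality because $\Delta$ is locally finite); minimality of $J$ then gives $J \subseteq I$. For the reverse inclusion $I \subseteq J$: we have $H_1 \subseteq P_J$ with finite index in $H$, so $\Pc(W_{A})$-type considerations show that $W_A$, for every $A$, is commensurable with a parabolic of type contained in $J$; applied to $A = A_1$ this gives $I = I_{A_1} \subseteq J$ after passing through Part (iii) and Lemma~\ref{lem2.4}. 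Combining, $I = J$.

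\emph{Main obstacle.} The delicate step is Part (i): passing from the finite-index containment $W_I \leq \Pc(W_{A_1})$ to the exact containment $W_I \leq W_{A_1}$, and more precisely verifying that Lemma~\ref{3.10} can indeed be invoked for \emph{all} $\overline{h_1}$-essential roots simultaneously and that the resulting root wing groups $U_{\pm\alpha_0}$ genuinely lie in $H_1$ rather than merely in some conjugate. Here the subtlety flagged in the remark before Lemma~\ref{3.11} — that our $U_\alpha$ are wing fixators and hence a priori larger than the algebraic root groups in \cite{CM13} — must be handled with care: one needs Proposition~\ref{pr:trans-apt} and \cite[Lemma 3.4(b)]{Caprace} to control how $U_\alpha$ interacts with apartments other than $A_0$, exactly as in the proof of Proposition~\ref{pr:trans-apt}. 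Once this is in place, the remaining parts are bookkeeping with parabolic closures and the finite-index transfer Lemma~\ref{lem2.4}, following \cite{CM13} essentially line by line.
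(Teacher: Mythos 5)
Your parts (i)--(iii) are fine, though for (i) you take a genuinely different route from the paper. The paper's proof of (i) is a one-liner: by Lemma~\ref{3.12} and Lemma~\ref{lem2.4} one has $I=I_{A_1}$ (this is the observation recorded just before the lemma), and after the conjugation arranged in Definition~\ref{def:I}\eqref{it:I} the finite-index parabolic $P_{I_{A_1}}\leq W_{A_1}$ is the \emph{standard} one $W_I$; so condition \eqref{it:ess-d} of Lemma~\ref{3.11} holds for $A_1$ and $T=I$, giving $L^+_I\subseteq H_1$ immediately. You instead replay the argument of \cite[Lemma~3.13]{CM13} via $\overline{h_1}$-essential roots and Lemma~\ref{3.10}; this is workable (it is what Lemma~\ref{3.10} is there for), provided you phrase the conclusion correctly: Lemma~\ref{3.10} places $U_{\alpha_0}$ inside $\inv{b}g^nK_rg^{-n}b$, so what lands in $H_1$ is the conjugate $bU_{\alpha_0}\inv{b}$, i.e.\ the root wing group of the root $b\alpha_0$ of $A_1$, not $U_{\alpha_0}$ itself; one then gets the lifts $n_{b\alpha_0}\in N_{A_1}$ and hence $W_I=\Pc(\overline{h_1})\leq W_{A_1}$, and only then Lemma~\ref{3.11}. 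You flag this conjugation issue yourself, so this is a matter of bookkeeping rather than a gap, but it is strictly longer than the paper's argument. For (ii) and (iii) both you and the paper defer to \cite[Lemmas~3.14 and~3.15]{CM13}, so there is nothing to compare.

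The genuine problem is your part (iv), where the paper cites \cite[Lemma~3.16]{CM13} but you substitute your own sketch, and the sketch does not hold together at its crucial step. The inclusion you call ``easy'' ($J\subseteq I$) rests on the claims that ``$H_1$ stabilizes $\Rd_I$'' and that ``$H_1\cap P_I$ has finite index in $H_1$''. Neither is justified: Lemma~\ref{3.11} gives transitivity of $H_1$ on the \emph{chambers} of $\Rd_I$, which implies neither that $H_1$ maps $\Rd_I$ to itself nor anything about $[H_1:H_1\cap P_I]$; and your parenthetical does not repair this, since $H_1\cap P_I$ merely \emph{contains} $\Stab_{H_1}(c_0)$, whose index in $H_1$ is infinite, while $[H_1:H_1\cap P_I]$ counts the $H_1$-translates of $\Rd_{I,c_0}$, which is not visibly finite before one knows $I=J$. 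Establishing this finite-index property (so that the minimality of $J$ in Definition~\ref{def:J}\eqref{it:J} can be invoked, together with the reverse inclusion $I\subseteq J$, to force $I=J$) is precisely the substance of \cite[Lemma~3.16]{CM13}, which uses part (iii) for \emph{all} apartments $A\in\app$ in an essential way; it cannot be waved through by local finiteness of panels. Your reverse inclusion $I\subseteq J$ is essentially correct once made precise ($N_A\leq H_1\leq P_J$ forces $W_A\leq W_J$, hence $\Pc(W_{A_1})\leq W_J$ and $I\subseteq J$ since $W_I$ is standard and essential), but as written the whole of (iv) needs either the missing argument or an explicit appeal to \cite[Lemma~3.16]{CM13}, as the paper does.
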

\begin{proof}
\begin{enumerate}[\rm (i)]
\item
This follows from the fact that $I=I_{A_1}$ and $P_I=W_I$; the conclusion follows from Lemma~\ref{3.11}.

\item
See \cite[Lemma 3.14]{CM13}.

\item
See \cite[Lemma 3.15]{CM13}.
%

\item
See \cite[Lemma 3.16]{CM13}.
%
%
%
\qedhere
\end{enumerate}
\end{proof}

\begin{coro}\label{cor:trans-Rj}
The group $H_1$ acts transitively on the chambers of $\Rd_{J}$.
\end{coro}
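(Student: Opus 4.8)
The statement is now essentially a formal consequence of the results accumulated in Lemma~\ref{3.11} and Lemma~\ref{le:several}. The plan is to verify that the hypotheses of Lemma~\ref{3.11} are met for the subset $J$, and then to read off the conclusion from its equivalence $\eqref{it:ess-a}\Leftrightarrow\eqref{it:ess-b}$.

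\textbf{Step 1 (checking that $J$ is admissible).} First I would recall that, by the choice made in Definition~\ref{def:J}\eqref{it:J}, the subset $J\subseteq S$ is essential in the sense of Definition~\ref{parb}\eqref{it:essential}. This is exactly the hypothesis needed to invoke Lemma~\ref{3.11} with $T=J$ (and with any apartment $A\in\app$, say $A=A_0$).

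\textbf{Step 2 (producing the root wing groups inside $H_1$).} By Lemma~\ref{le:several}\rm{(i)} the group $H_1$ contains $L^+_I$, and by Lemma~\ref{le:several}\rm{(iv)} we have $I=J$; hence $H_1$ contains $L^+_J$. In other words, condition~\eqref{it:ess-a} of Lemma~\ref{3.11} (with $T=J$) holds.

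\textbf{Step 3 (transitivity).} Applying the equivalence $\eqref{it:ess-a}\Leftrightarrow\eqref{it:ess-b}$ in Lemma~\ref{3.11}, we conclude that $H_1$ is transitive on $\ch(\Rd_J)$, which is precisely the assertion of the corollary.

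\textbf{On the main obstacle.} There is essentially no obstacle left at this point: all of the real work—showing that $H_1$ contains enough root wing groups (Lemma~\ref{le:several}\rm{(i)}) and identifying the essential types $I$ and $J$ (Lemma~\ref{le:several}\rm{(iv)})—has already been carried out, and the dictionary between containing $L^+_J$ and acting transitively on $\Rd_J$ is provided by Lemma~\ref{3.11}. The only point that requires a moment's attention is that $J$ is essential, so that Lemma~\ref{3.11} is applicable; this is guaranteed by construction in Definition~\ref{def:J}\eqref{it:J}.
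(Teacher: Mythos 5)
Your proof is correct and follows exactly the paper's own argument: the paper proves the corollary by combining Lemma~\ref{3.11} with Lemma~\ref{le:several}, which is precisely your Steps 2--3, with Step 1 correctly noting the essentiality of $J$ needed to invoke Lemma~\ref{3.11}.
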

\begin{proof}
This follows by combining Lemmas~\ref{3.11} and~\ref{le:several}.
\end{proof}

We are approaching our main result; the following proposition already shows, in particular, that $H$ is contained in the stabilizer of a residue,
and it will only require slightly more effort to show that it is a \emph{finite index} subgroup of such a stabilizer.
\begin{prop}\label{pr:main1}
Every subgroup of $\Aut(\Delta)$ containing $H_1$ as a subgroup of finite index is contained in a stabilizer $\Stab_{\Aut(\Delta)}(\Rd_{J\cup J'})$,
where $J'$ is a spherical subset of $J^\perp$.
\end{prop}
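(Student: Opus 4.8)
The strategy is to follow the template of \cite[Proposition~3.17]{CM13}, replacing root groups by root wing groups throughout. Let $H'$ be a subgroup of $\Aut(\Delta)$ containing $H_1$ with finite index; we must produce a spherical subset $J' \subseteq J^\perp$ with $H' \subseteq \Stab_{\Aut(\Delta)}(\Rd_{J \cup J'})$. First I would observe that by Corollary~\ref{cor:trans-Rj}, $H_1$ is transitive on the chambers of $\Rd_J$, hence so is $H'$; in particular $H'$ stabilizes $\Rd_J$ setwise (an automorphism that maps one chamber of a residue to another chamber of the same residue maps the residue to itself, and transitivity forces the $H'$-orbit of $\Rd_J$ to be a single residue). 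So $H' \subseteq P_J = \Stab_{\Aut(\Delta)}(\Rd_J)$, which already gives the weaker statement mentioned before the proposition.

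Next I would analyze the action of $H'$ on the ``transversal'' combinatorics around $\Rd_J$. Since $P_J$ acts on $\Rd_J$ and the chambers outside $\Rd_J$ are organized by projections onto $\Rd_J$, the key is to understand the kernel $\Fix_{H'}(\ch(\Rd_J))$ and the structure coming from $J^\perp$. The idea is that $\Rd_{J \cup J^\perp}$ is the union of all residues of type $J$ parallel to $\Rd_J$ (Proposition~\ref{Proposition2.8}), and $H'$ permutes these parallel $J$-residues; I would let $J'$ be determined by the subset of $J^\perp$ along which $H'$ actually ``moves'' things — more precisely, one shows that there is a spherical $J' \subseteq J^\perp$ such that $H'$ preserves the $(J \cup J')$-residue $\Rd_{J \cup J'}$ while the complementary directions in $J^\perp$ are not preserved. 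The sphericity of $J'$ is not automatic and must be forced: if some irreducible component of $J'$ were non-spherical, then combined with the transitivity on $\Rd_J$ one could enlarge the essential type $I = J$ beyond its maximality (Definition~\ref{def:I}\eqref{it:I}), contradicting the maximality established in Lemma~\ref{le:several}. This is where I expect the main obstacle to lie: pinning down exactly which spherical subset $J'$ works and proving that $H'$ does \emph{not} stabilize any strictly larger residue requires carefully tracking how elements of $H'$ act on panels of type in $J^\perp$ versus the root wing groups $L^+_J \subseteq H_1$ that are known (Lemma~\ref{le:several}(i)) to be transitive on $\Rd_J$ but trivial in the $J^\perp$-directions by the wing/parallelism structure.

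Concretely, I would argue as follows. Because $H_1 \supseteq L^+_J$ and $L^+_J$ is generated by root wing groups $U_\alpha$ with $\alpha \in \Phi_J$, and because each such $U_\alpha$ fixes the $s$-wing $X_s(c)$ pointwise (Definition~\ref{def:rootwinggroup}) — in particular fixes every chamber whose projection onto the relevant panel avoids $c$ — one deduces that $L^+_J$ fixes pointwise every residue of type contained in $J^\perp$ through $c_0$, hence fixes $\Rd_{J^\perp}$ pointwise. Thus the ``$J^\perp$-component'' of the $H_1$-action factors through the finite quotient $H_1/\langle \text{kernel}\rangle$, and since $H'/H_1$ is finite, the full group $H'$ acts on the set of $J$-residues parallel to $\Rd_J$ (equivalently, on $\ch(\Rd_{J^\perp})$ up to the parallelism bijection) through a finite group; let $J'$ be the type of the residue through $c_0$ fixed by the image of this finite action restricted appropriately — one takes $J'$ to be a maximal spherical subset of $J^\perp$ such that $\Rd_{J \cup J'}$ is $H'$-stable. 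Sphericity: $J' \subseteq J^\perp$ and $J$ is essential, so if $J'$ had a non-spherical irreducible component $C$, then $C$ commutes with all of $J$, so $J \cup C$ would be essential with strictly larger essential type, and the transitivity of $H_1$ on $\Rd_J$ together with stability of $\Rd_{J \cup C}$ would (via Lemma~\ref{3.11} applied to $T = J \cup C$, after passing to a suitable apartment) show $W_A \supseteq W_{J \cup C}$, contradicting the maximality of $I = J$ in Definition~\ref{def:I}\eqref{it:I} combined with Lemma~\ref{le:several}(iv). Hence $J'$ is spherical and $H' \subseteq \Stab_{\Aut(\Delta)}(\Rd_{J \cup J'})$, as required.
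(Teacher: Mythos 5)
The crux of the proposition --- producing a \emph{spherical} $J'\subseteq J^\perp$ such that the overgroup stabilizes $\Rd_{J\cup J'}$ --- is asserted rather than proved in your proposal, and the steps you offer in its place contain two false claims. First, transitivity of $H'$ on $\ch(\Rd_J)$ does not imply that $H'$ stabilizes $\Rd_J$: transitivity says that \emph{some} element of $H'$ carries $c$ to $c'$ for given chambers $c,c'\in\Rd_J$, but it gives no reason why an \emph{arbitrary} $h\in H'$ should map any chamber of $\Rd_J$ back into $\Rd_J$. In fact $H'$ typically does not stabilize $\Rd_J$: the group $\Stab_{\Aut(\Delta)}(\Rd_{J\cup J'})$ with $J'\neq\emptyset$ contains $H_1$ with finite index (Lemmas~\ref{fi1} and~\ref{fi2}) yet moves $\Rd_J$ to parallel $J$-residues --- this is exactly why $J'$ appears in the statement. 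So your opening conclusion $H'\subseteq P_J$ is wrong, and with it the ``base case'' for choosing ``a maximal spherical $J'$ with $\Rd_{J\cup J'}$ $H'$-stable'' disappears: you never establish that \emph{any} residue of type $J\cup J'$ with $J'\subseteq J^\perp$ is $H'$-stable. Second, $L^+_J$ does not fix $\Rd_{J^\perp}$ pointwise; by Lemma~\ref{3.11} it is transitive on $\ch(\Rd_J)$, so it does not even fix $c_0$. The finiteness you are after comes simply from the fact that $H_1$ stabilizes $\Rd_J$ (Definition~\ref{def:J}\eqref{it:J}) and $[H':H_1]<\infty$, so the $H'$-orbit of $\Rd_J$ has at most $[H':H_1]$ elements.

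The argument the paper appeals to (\cite[Lemma 3.19]{CM13}, transcribed with root wing groups) differs at precisely the decisive points. One first shows that each $h\Rd_J$ with $h\in H'$ is \emph{parallel} to $\Rd_J$ --- this needs a proof (a common finite index subgroup of $H_1$ and $hH_1h^{-1}$ acts with finitely many orbits on both residues, and this cocompactness forces parallelism), whereas you simply assert that $H'$ ``permutes these parallel $J$-residues''. By Proposition~\ref{Proposition2.8} the finitely many residues $h\Rd_J$ then all lie in $\Rd_{J\cup J^\perp}$, so $H'$ stabilizes $\Rd_{J\cup J^\perp}$ and acts on the transverse factor $\ch(\Rd_{J^\perp})$ with a bounded orbit; the sphericity of $J'$ is extracted from this boundedness via a fixed-point argument in the transverse building (bounded orbits yield a fixed point whose stabilizer is the stabilizer of a spherical residue), \emph{not} from the maximality of $I$. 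Your substitute argument for sphericity does not work: knowing only that $H'$ stabilizes $\Rd_{J\cup C}$ for a non-spherical $C\subseteq J^\perp$ gives neither $L^+_{J\cup C}\subseteq H_1$ nor transitivity of $H_1$ (or of some $N_A$) on $\Rd_{J\cup C}$, so none of the equivalent conditions of Lemma~\ref{3.11} is triggered, and no contradiction with Definition~\ref{def:I}\eqref{it:I} or with the minimality of $J$ follows. These two missing ingredients --- parallelism of the translated residues and the bounded-orbit/fixed-point mechanism for sphericity --- are the actual content of the proposition, so the proposal as it stands has a genuine gap.
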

\begin{proof}
The proof is exactly the same as in \cite[Lemma 3.19]{CM13}.
\end{proof}
Notice that since $\Delta$ is irreducible, the index set $J\cup J'$ is only equal to $S$ if already $J=S$.


%
%

\begin{lem}\label{fi1}
The group $H_1$ is a finite index subgroup of $\Stab_{\Aut(\Delta)}(\Rd_J)$.
\end{lem}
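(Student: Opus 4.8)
The plan is to combine the transitivity of $H_1$ on the chambers of $\Rd_J$ (Corollary~\ref{cor:trans-Rj}) with the openness of $H_1$, reducing the index computation to an action on a finite ball. Write $P_J := \Stab_{\Aut(\Delta)}(\Rd_J)$ and set $Q := P_J \cap \Stab_{\Aut(\Delta)}(c_0)$.

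The first step is to observe that $P_J = H_1 Q$. Indeed, given $p \in P_J$, the chamber $p c_0$ lies in $\Rd_J$ because $p$ stabilizes $\Rd_J$ and $c_0 \in \Rd_J$; by Corollary~\ref{cor:trans-Rj} there is some $h \in H_1$ with $h c_0 = p c_0$, and then $h^{-1} p \in Q$, so that $p = h (h^{-1} p) \in H_1 Q$. Using the elementary fact that $G = AB$ for subgroups $A, B \leq G$ forces $[G : A] = [B : A \cap B]$ (the assignment $b \mapsto Ab$ induces a bijection $(A \cap B) \backslash B \to A \backslash G$), we obtain $[P_J : H_1] = [Q : H_1 \cap Q]$. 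It therefore suffices to show that $H_1 \cap Q$ has finite index in $Q$.

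Since $H_1$ is open, there is some $r \in \N$ with $K_r = \Fix_{\Aut(\Delta)}(\B(c_0, r)) \subseteq H_1$; as $K_r$ fixes $c_0$ and $K_r \subseteq H_1 \subseteq P_J$, we get $K_r \subseteq H_1 \cap Q$, so it is enough to bound $[Q : K_r]$. Now every element of $Q$ fixes $c_0$ and hence preserves the ball $\B(c_0, r)$, which is finite because $\Delta$ is locally finite; this gives a homomorphism $Q \to \Sym(\B(c_0, r))$ whose kernel consists of the elements of $Q$ fixing $\B(c_0, r)$ pointwise, i.e.\ exactly $P_J \cap \Fix_{\Aut(\Delta)}(\B(c_0, r)) = K_r$ (the last equality because $K_r \subseteq P_J$). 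Consequently $[Q : K_r] \leq |\Sym(\B(c_0, r))| < \infty$, hence $[Q : H_1 \cap Q] \leq [Q : K_r] < \infty$, and therefore $[P_J : H_1] < \infty$, as desired.

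I do not anticipate a genuine obstacle here: once Corollary~\ref{cor:trans-Rj} is available, the statement is a soft consequence of openness and local finiteness, the only mildly delicate point being the identification of the kernel of the $Q$-action on $\B(c_0, r)$ with $K_r$, which works precisely because $K_r$ is by definition the full fixator of that ball in $\Aut(\Delta)$ and has already been arranged to lie inside $P_J$.
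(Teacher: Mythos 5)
Your proof is correct, and it rests on the same two pillars as the paper's argument: the transitivity of $H_1$ on the chambers of $\Rd_J$ (Corollary~\ref{cor:trans-Rj}) and the openness of $H_1$, used through the containment $K_r=\Fix_{\Aut(\Delta)}(\B(c_0,r))\subseteq H_1$. The difference is in how the final step is carried out. The paper argues topologically: since $\Delta$ is locally finite, the stabilizer in $P_J=\Stab_{\Aut(\Delta)}(\Rd_J)$ of a chamber of $\Rd_J$ is compact, so transitivity makes $H_1$ cocompact in $P_J$, and an open cocompact subgroup of a locally compact group has finite index. You make this concrete and elementary: your decomposition $P_J=H_1Q$ with $Q=P_J\cap\Stab_{\Aut(\Delta)}(c_0)$ is precisely the cocompactness statement, and instead of quoting compactness you observe that $Q$ acts on the finite ball $\B(c_0,r)$ with kernel $Q\cap K_r\subseteq H_1\cap Q$, giving $[Q:H_1\cap Q]\leq|\Sym(\B(c_0,r))|<\infty$ and hence $[P_J:H_1]=[Q:H_1\cap Q]<\infty$. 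Your version is self-contained (no appeal to the general fact about open cocompact subgroups) at the cost of a bit more bookkeeping; the paper's version is shorter and displays the structural reason (open plus cocompact) more transparently. The two uses of local finiteness are equivalent: compactness of chamber stabilizers on one side, finiteness of the ball $\B(c_0,r)$ on the other.
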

\begin{proof}
Let $G := \Stab_{\Aut(\Delta)}(\Rd_J)$.
We already know that $H_1$ stabilizes $\Rd_J$ (see Definition~\ref{def:J}\eqref{it:J})
and acts transitively on the set of chambers of $\Rd_J$ (see Corollary~\ref{cor:trans-Rj}).
Notice that the stabilizer in $G$ of a chamber of $\Rd_J$ is compact, hence $H_1$ is a cocompact subgroup of $G$.
Since $H_1$ is also open in~$G$, we conclude that $H_1$ is a finite index subgroup of $G$.
\end{proof}

\begin{lem}\label{fi2}
For every spherical $J' \subseteq J^\perp$, the index of $\Stab_{\Aut(\Delta)}(\Rd_{J})$ in $\Stab_{\Aut(\Delta)}(\Rd_{J\cup J'})$ is finite.
\end{lem}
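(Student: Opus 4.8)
The plan is to show that $\Rd_{J\cup J'}$ contains only finitely many residues of type $J$, and that each of them has the same stabilizer orbit-structure, so that the stabilizer of $\Rd_J$ has finite index in the stabilizer of $\Rd_{J\cup J'}$. First I would recall that since $J'\subseteq J^\perp$, the types $J$ and $J'$ commute, so $\Rd_{J\cup J'}$ decomposes (combinatorially) as a product of a $J$-residue and a $J'$-residue: more precisely, any chamber $c\in\Rd_{J\cup J'}$ determines a $J$-residue $\Rd_{J,c}\subseteq\Rd_{J\cup J'}$ and a $J'$-residue $\Rd_{J',c}\subseteq\Rd_{J\cup J'}$, and the $J$-residues contained in $\Rd_{J\cup J'}$ are naturally in bijection with the chambers of any fixed $J'$-residue $\Rd_{J',c_0}$ (two chambers of $\Rd_{J\cup J'}$ lie in the same $J$-residue iff their projections to $\Rd_{J',c_0}$ coincide). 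Since $J'$ is spherical, $\Rd_{J',c_0}$ is a spherical residue; and since $\Delta$ is locally finite, a spherical residue has finitely many chambers. Hence $\Rd_{J\cup J'}$ contains only finitely many residues of type $J$.

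Next I would observe that $G:=\Stab_{\Aut(\Delta)}(\Rd_{J\cup J'})$ permutes the (finitely many) type-$J$ residues contained in $\Rd_{J\cup J'}$, because any type-preserving automorphism stabilizing $\Rd_{J\cup J'}$ sends a $J$-residue inside it to another $J$-residue inside it. This gives a homomorphism from $G$ to the finite symmetric group on this finite set of $J$-residues, and the kernel is exactly the subgroup of $G$ stabilizing $\Rd_J$ — which equals $\Stab_{\Aut(\Delta)}(\Rd_{J})\cap G = \Stab_{\Aut(\Delta)}(\Rd_J)$, since any automorphism stabilizing $\Rd_J$ automatically stabilizes the unique residue of type $J\cup J'$ containing it. Therefore $\Stab_{\Aut(\Delta)}(\Rd_J)$ is the kernel of a homomorphism from $G$ to a finite group, hence has finite index in $G$.

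The main technical point to nail down carefully is the claim that the $J$-residues inside $\Rd_{J\cup J'}$ biject with the chambers of a fixed $J'$-residue; this uses that $J$ and $J'$ commute and the standard projection properties of residues in buildings (the projection of a $J$-residue onto a $J'$-residue, when $[J,J']=1$, is a single chamber, by Definition~\ref{def:par} and Proposition~\ref{Proposition2.8}, together with the fact that residues of commuting types are parallel to their projections in the obvious way). Once this combinatorial bijection and the finiteness of spherical residues in a locally finite building are in place, the index bound is immediate from the orbit–stabilizer style argument above. The only place one has to be a little careful is to confirm that $G$ indeed acts on this finite set — i.e., that an element of $G$ cannot map a type-$J$ residue contained in $\Rd_{J\cup J'}$ to one not contained in it — but this is clear since automorphisms preserve residues and containments and $G$ fixes $\Rd_{J\cup J'}$ setwise.
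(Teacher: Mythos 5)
Your proof is correct and takes essentially the same route as the paper, which simply invokes the decomposition $\ch(\Rd_{J\cup J'})=\ch(\Rd_J)\times\ch(\Rd_{J'})$ (cited from Caprace) together with the finiteness of $\ch(\Rd_{J'})$ for spherical $J'$ in a locally finite building; you are just spelling out the projection argument behind that decomposition and the resulting finite permutation action. One small imprecision: the kernel of the action of $\Stab_{\Aut(\Delta)}(\Rd_{J\cup J'})$ on the set of $J$-residues in $\Rd_{J\cup J'}$ is the subgroup stabilizing \emph{all} of them, not exactly $\Stab_{\Aut(\Delta)}(\Rd_J)$, but since that kernel is contained in $\Stab_{\Aut(\Delta)}(\Rd_J)$ and has finite index (or, alternatively, by orbit--stabilizer applied to the point $\Rd_J$), the conclusion is unaffected.
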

\begin{proof}
By \cite[Lemma 2.2]{Caprace}, we have $\ch(\Rd_{J \cup J'}) = \ch(\Rd_J) \times \ch(\Rd_{J'})$.
As $J'$ is spherical, the chamber set $\ch(\Rd_{J'})$ is finite; the result follows.
\end{proof}

We are now ready to prove our main theorem.

\begin{thm}\label{th:openpropermain}
Let $\Delta$ be a thick irreducible semi-regular locally finite right-angled building of rank at least $2$.
Then any proper open subgroup of $\Aut(\Delta)$ is contained with finite index in the stabilizer in $\Aut(\Delta)$ of a proper residue.
\end{thm}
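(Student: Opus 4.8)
### Proof proposal for Theorem~\ref{th:openpropermain}

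The plan is to split the argument according to whether the proper open subgroup $H$ is compact or non-compact, which covers all cases since $H$ is a subgroup of a t.d.l.c.\@ group and compact-versus-non-compact is exhaustive. First I would dispose of the compact case: by Proposition~\ref{pr:openequivalent}, if $H$ is compact then it is already a finite index subgroup of the stabilizer of a spherical residue, and since the rank of $\Delta$ is at least $2$, a spherical residue is always a proper residue (the whole building cannot be spherical because $(W,S)$ is irreducible, non-spherical of rank $\geq 2$, so $\Delta$ is thick semi-regular of non-spherical type and Theorem~\ref{th:caprace-main} applies). Hence the statement holds for compact $H$.

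For the non-compact case, I would invoke the entire machinery built up in Section~\ref{ss:open}. Starting from a non-compact open $H$, apply the reductions in Definition~\ref{def:J}: pass to a conjugate so that $H_1 = H \cap P_J$ is a finite index subgroup of $H$ (where $J \subseteq S$ is the minimal, hence essential, subset for which some conjugate of $P_J$ meets $H$ in finite index), and fix $r \in \N$ with $K_r = \Fix_{\Aut(\Delta)}(\B(c_0,r)) \subseteq H_1$. By Corollary~\ref{cor:trans-Rj}, $H_1$ acts transitively on $\ch(\Rd_J)$, and by Lemma~\ref{fi1}, $H_1$ has finite index in $G := \Stab_{\Aut(\Delta)}(\Rd_J)$. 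Now $H$ contains $H_1$ with finite index and $H_1$ has finite index in $G$; it remains to show $H$ is contained with finite index in the stabilizer of a proper residue. By Proposition~\ref{pr:main1} applied to $H$ itself (which contains $H_1$ with finite index), $H \subseteq \Stab_{\Aut(\Delta)}(\Rd_{J \cup J'})$ for some spherical $J' \subseteq J^\perp$. Combining Lemma~\ref{fi1} and Lemma~\ref{fi2}, the chain $H_1 \leq H \leq \Stab(\Rd_{J\cup J'})$ has the property that $H_1$ has finite index in $\Stab(\Rd_J)$, which in turn has finite index in $\Stab(\Rd_{J\cup J'})$; hence $H_1$, and therefore $H$, has finite index in $\Stab_{\Aut(\Delta)}(\Rd_{J\cup J'})$.

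The one remaining point is that $\Rd_{J \cup J'}$ is a \emph{proper} residue, i.e., $J \cup J' \neq S$. This is exactly the remark immediately following Proposition~\ref{pr:main1}: since $\Delta$ is irreducible, $J \cup J'$ can only equal $S$ if $J = S$ already; but if $J = S$ then $P_J = \Aut(\Delta)$ and $H$ would be a finite index subgroup of $\Aut(\Delta)$, which by Theorem~\ref{th:caprace-main} (abstract simplicity of $\Aut(\Delta)$) forces $H = \Aut(\Delta)$, contradicting the hypothesis that $H$ is a proper subgroup. Therefore $J \subsetneq S$, hence $J \cup J' \subsetneq S$, and $\Rd_{J\cup J'}$ is proper, completing the proof.

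I expect the main subtlety to be purely bookkeeping: verifying that the finite-index relations compose correctly along the chain $H_1 \leq H \leq \Stab(\Rd_{J\cup J'})$ and that one really may apply Proposition~\ref{pr:main1} to $H$ (rather than only to $H_1$) — but since $H_1$ is a finite index subgroup of $H$ by construction, this is immediate. The genuinely hard work has already been carried out in the preceding lemmas (especially Lemma~\ref{le:several}, establishing $I = J$, and Corollary~\ref{cor:trans-Rj}), so the proof of the theorem itself is essentially an assembly of these pieces together with the properness argument via simplicity.
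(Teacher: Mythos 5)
Your proposal is correct and follows essentially the same route as the paper's own proof: the same compact/non-compact case split (Proposition~\ref{pr:openequivalent} for the compact case; Definition~\ref{def:J}, Corollary~\ref{cor:trans-Rj}, Proposition~\ref{pr:main1} and Lemmas~\ref{fi1}--\ref{fi2} for the non-compact case), with properness of $\Rd_{J\cup J'}$ obtained from irreducibility together with the simplicity of $\Aut(\Delta)$ (Theorem~\ref{th:caprace-main}). Your extra remark that a spherical residue is automatically proper (since the irreducible right-angled type of rank at least $2$ is non-spherical) is a harmless elaboration of a point the paper leaves implicit.
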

\begin{proof}
Let $H$ be a proper open subgroup of $\Aut(\Delta)$.
If $H$ is compact, then the result follows from Proposition~\ref{pr:openequivalent}.

So assume that $H$ is not compact.
By Definition~\ref{def:J}\eqref{it:J}, we may assume that $H$ contains a finite index subgroup $H_1$ which, by Corollary~\ref{cor:trans-Rj},
acts transitively on the chambers of some residue $\Rd_J$.
By Proposition~\ref{pr:main1}, $H$ is a subgroup of $G := \Stab_{\Aut(\Delta)}(\Rd_{J\cup J'})$ for some spherical $J' \subseteq J^\perp$.
On the other hand, Lemmas~\ref{fi1} and~\ref{fi2} imply that $H_1$ is a finite index subgroup of $G$;
since $H_1$ is a finite index subgroup of $H$, it follows that also $H$ has finite index in $G$.

It only remains to show that $\Rd_{J \cup J'}$ is a proper residue.
If not, then $G = \Aut(\Delta)$, but since $G$ is simple (Theorem~\ref{th:caprace-main}) and infinite, it has no proper finite index subgroups.
Since $H$ is a proper open subgroup of $G$, the result follows.
\end{proof}

\section{Two applications of the main theorem}\label{se:appl}

In this last section we present two consequences of~Theorem~\ref{th:openpropermain},
both of which were suggested to us by Pierre-Emmanuel Caprace.
The first states that the automorphism group of a locally finite thick semi-regular right-angled building $\Delta$ is Noetherian (see Definition~\ref{Noet});
the second deals with reduced envelopes in $\Aut(\Delta)$.

\begin{defn}\label{Noet}
We call a topological group \emph{Noetherian} if it satisfies the ascending chain condition on open subgroups.
\end{defn}

We will prove that the group $\Aut(\Delta)$ is Noetherian by making use of the following characterization.
\begin{lem}[{\cite[Lemma 3.22]{CM13}}]\label{le:noeth}
Let $G$ be a locally compact group.
Then $G$ is Noetherian if and only if every open subgroup of $G$ is compactly generated.
\end{lem}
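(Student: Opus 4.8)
The plan is to prove both implications of the equivalence directly, using only three elementary facts about topological groups: a subgroup containing a nonempty open set is open; an increasing union of open subgroups is an open subgroup; and a finite union of compact sets is compact. (This is a classical fact, which is why it is merely cited in the body of the paper.)

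First I would handle the easy direction: assume every open subgroup of $G$ is compactly generated and let $H_1 \le H_2 \le \cdots$ be an ascending chain of open subgroups. Then $H := \bigcup_{n} H_n$ is a subgroup (the $H_n$ are nested), and it is open as a union of open sets. By hypothesis $H = \langle C \rangle$ for some compact $C \subseteq H$. Now $\{H_n\}_{n}$ is an open cover of $C$, so finitely many of the $H_n$ cover $C$, and since they are nested a single $H_N$ already contains $C$. Then $H = \langle C\rangle \subseteq H_N \subseteq H$, so $H_N = H = H_{N+1} = \cdots$ and the chain stabilises; hence $G$ is Noetherian.

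For the converse I would use the local compactness of $G$. Let $H$ be an open subgroup; being open it is itself locally compact, so it contains a compact symmetric neighbourhood $V$ of the identity (take a compact neighbourhood of $1$ sitting inside the open set $H$ and intersect it with its inverse). Then $L_0 := \langle V\rangle$ is compactly generated and open, since it contains the open set $\mathrm{int}(V) \ni 1$ and is a subgroup. Suppose for contradiction that $H$ is not compactly generated. I would then build recursively a strictly increasing chain $L_0 \subsetneq L_1 \subsetneq \cdots$ of open, compactly generated subgroups of $H$: given $L_n = \langle V \cup \{x_1^{\pm 1},\dots,x_n^{\pm 1}\}\rangle$, which is compactly generated (a finite union of compacta), we have $L_n \ne H$, so we may pick $x_{n+1} \in H \setminus L_n$ and set $L_{n+1} := \langle V \cup \{x_1^{\pm 1},\dots,x_{n+1}^{\pm 1}\}\rangle$, again open, compactly generated and strictly containing $L_n$. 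This chain contradicts the Noetherian hypothesis, so $H$ must be compactly generated.

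I do not expect any real obstacle here: the argument is routine. The only two points deserving a moment's care are verifying that the subgroups produced are genuinely open (which is the general fact that a subgroup containing a nonempty open set is open), and, in the forward implication, observing that the compact generating set of $\bigcup_n H_n$ must lie entirely inside a single $H_N$ — this is precisely where compactness of the generating set, rather than mere finite generation, is used.
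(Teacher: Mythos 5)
Your proof is correct. Note that the paper does not prove this lemma at all: it is quoted from \cite[Lemma 3.22]{CM13}, and your argument is essentially the standard one given there --- the union of an ascending chain of open subgroups is open, so a compact generating set must land in one term of the chain; conversely, local compactness (Hausdorff) provides a compact symmetric identity neighbourhood inside any open subgroup, from which a failure of compact generation yields a strictly ascending chain of compactly generated open subgroups, contradicting the Noetherian property.
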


\begin{prop}\label{pr:noeth}
Let $\Delta$ be a locally finite thick semi-regular right-angled building. Then the group $\Aut(\Delta)$ is Noetherian.
\end{prop}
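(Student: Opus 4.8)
The plan is to deduce the Noetherian property of $\Aut(\Delta)$ from Lemma~\ref{le:noeth}: it suffices to show that every open subgroup of $\Aut(\Delta)$ is compactly generated. Since $\Delta$ is locally finite, $\Aut(\Delta)$ is a t.d.l.c.\ group, so $\Aut(\Delta)$ itself is compactly generated (for instance, the stabilizer of a chamber is compact and open, and together with finitely many elements moving $c_0$ across each panel it generates a group acting transitively on the chambers, which suffices). Thus we only need to handle \emph{proper} open subgroups.

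First I would invoke Theorem~\ref{th:openpropermain}: if $H$ is a proper open subgroup of $\Aut(\Delta)$, then $H$ is contained with finite index in $G := \Stab_{\Aut(\Delta)}(\Rd)$ for some proper residue $\Rd$ of $\Delta$. Since a finite index subgroup of a compactly generated group is again compactly generated, it is enough to prove that $G$ itself is compactly generated. Here I would split according to whether $\Rd$ is spherical or not. If $\Rd$ is spherical, then by Proposition~\ref{pr:openequivalent} (or directly, since a spherical residue is finite and local finiteness makes its stabilizer compact and open), $G$ is compact, hence trivially compactly generated. If $\Rd$ is non-spherical of type $J$, then $G$ preserves $\Rd$, the pointwise stabilizer in $G$ of a chamber of $\Rd$ is compact and open, and $G$ acts on $\ch(\Rd)$; the residue $\Rd$ is itself a locally finite thick semi-regular right-angled building of type $(W_J, J)$, so its chamber graph is connected and the chamber-transitive part of the action together with a compact chamber stabilizer yields compact generation of $G$ by the standard argument.

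The one point requiring a little care — and the step I expect to be the main obstacle — is verifying that $G = \Stab_{\Aut(\Delta)}(\Rd)$ acts with compact chamber-stabilizers and in a suitably transitive way on $\ch(\Rd)$, so that the ``compact open subgroup plus finitely many generators along edges of a connected graph'' argument applies cleanly. For this I would use that $\Delta$ is locally finite so every ball $\B(c_0, n)$ is finite, hence its fixator is a compact open subgroup of $\Aut(\Delta)$ and a fortiori of $G$; and I would use Caprace's result (Theorem~\ref{th:caprace-main}) that $\Aut(\Delta_{\Rd})$ acts strongly transitively, in particular chamber-transitively, on the right-angled building $\Delta_{\Rd}$ underlying $\Rd$, together with the fact that $G$ surjects onto a chamber-transitive group of automorphisms of $\Rd$. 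Combining a compact open stabilizer with finitely many elements of $G$ realizing adjacencies of $c_0$ within $\Rd$ produces a compactly generated subgroup acting transitively on $\ch(\Rd)$ and containing a compact open subgroup; a routine orbit argument then shows this subgroup is all of $G$, so $G$ is compactly generated. By Lemma~\ref{le:noeth}, $\Aut(\Delta)$ is Noetherian.
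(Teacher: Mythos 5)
Your proposal is correct and follows essentially the same route as the paper: reduce via Lemma~\ref{le:noeth} to compact generation of open subgroups, apply Theorem~\ref{th:openpropermain}, and show residue stabilizers are compactly generated by a compact open chamber stabilizer together with finitely many elements moving the chamber to its neighbours in the residue. The paper handles your final finite-index step by citing that a closed cocompact subgroup of a compactly generated locally compact group is compactly generated, which is the same standard fact you invoke.
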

\begin{proof}
By Lemma~\ref{le:noeth}, we have to show that every open subgroup of $\Aut(\Delta)$ is compactly generated.
By Theorem~\ref{th:openpropermain}, every open subgroup of $\Aut(\Delta)$ is contained with finite index in the stabilizer of a residue of $\Delta$.

Stabilizers of residues are compactly generated, since they are generated by the stabilizer of a chamber $c_0$ (which is a compact open subgroup) together
with a choice of elements mapping $c_0$ to each of its (finitely many) neighbors.
Since a closed cocompact subgroup of a compactly generated group is itself compactly generated (see~\cite{MS59}), we conclude
that indeed every open subgroup of $\Aut(\Delta)$ is compactly generated and hence $\Aut(\Delta)$ is Noetherian.
\end{proof}

Our next application deals with reduced envelopes, a notion introduced by Colin Reid in \cite{Reid}
in the context of arbitrary totally disconnected locally compact (t.d.l.c.) groups.
\begin{defn}
\begin{enumerate}[(i)]
    \item
        Two subgroups $H_1$ and $H_2$ of a group $G$ are called \emph{commensurable} if $H_1 \cap H_2$ has finite index in both $H_1$ and $H_2$.
    \item
        Let $G$ be a totally disconnected locally compact (t.d.l.c.) group and let $H \leq G$ be a subgroup.
        An \emph{envelope} of $H$ in $G$ is an open subgroup of $G$ containing $H$.
        An envelope $E$ of $H$ is called \emph{reduced} if for any open subgroup $E_2$ with $[H : H\cap E_2] < \infty$ we have $[E : E\cap E_2] < \infty$.
\end{enumerate}
\end{defn}
Not every subgroup of $G$ has a reduced envelope, but clearly any two reduced envelopes of a given group are commensurable.

\begin{thm}[{\cite[Theorem B]{Reid2}}]\label{theomreidB}
Let $G$ be a t.d.l.c.\@ group and let $H$ be a (not necessarily closed) compactly generated subgroup of $G$.
Then there exists a reduced envelope for $H$ in $G$.
\end{thm}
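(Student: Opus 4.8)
This statement is a general fact about totally disconnected locally compact (t.d.l.c.) groups, with no reference to buildings, so the plan is to prove it for an arbitrary t.d.l.c.\ group $G$ and a compactly generated subgroup $H\le G$. First I would reduce to the case where $H$ is closed: the closure $\overline{H}$ is again compactly generated (a compact generating set of $H$ together with $\overline{H}\cap U$, for a compact open subgroup $U\le G$, generates an open, hence closed, subgroup of $\overline{H}$ containing the dense subgroup $H$, so it equals $\overline{H}$), and since open subgroups of $G$ are closed one has $[H:H\cap E_2]<\infty\iff[\overline{H}:\overline{H}\cap E_2]<\infty$ for every open $E_2\le G$; hence a reduced envelope of $\overline{H}$ is also one of $H$. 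Assuming $H=\overline{H}$, I would observe that envelopes exist and that at least one is compactly generated: for $U$ a compact open subgroup of $G$ and $K$ a compact generating set of $H$, the subgroup $E_0:=\langle K\cup U\rangle$ is open (it contains $U$), contains $H$, and is compactly generated.

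The core of the argument is to single out, inside $E_0$, a commensurability-minimal envelope, and I would do this through the coarse geometry of $E_0$. Fix a Cayley--Abels graph $\Gamma$ of the compactly generated group $E_0$, i.e.\ a connected locally finite graph on which $E_0$ acts vertex-transitively with compact open vertex stabilizers, and a base vertex $v_0$. Since $\Stab_{E_0}(v_0)\cap H$ is compact open in $H$ and $H$ is compactly generated, $H$ acts \emph{cocompactly} on a suitable connected $H$-invariant subgraph $\Gamma_0$ lying within bounded Hausdorff distance of the orbit $H\cdot v_0$ (for instance a coarse convex hull of that orbit), so that $V(\Gamma_0)$ is a finite union of $H$-orbits. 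I would then set $E^{\sharp}:=\Stab_{E_0}(\Gamma_0)$, so that $H\le E^{\sharp}\le E_0$.

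The crucial point --- and the one I expect to be the main obstacle --- is that $E^{\sharp}$ is \emph{open} in $E_0$. Membership of an element of the compact group $P:=\Stab_{E_0}(v_0)$ in $E^{\sharp}$ amounts to preserving $O\cap V(\Gamma_0)$ for every $P$-orbit $O$ on the vertex set of $\Gamma$ (each such orbit is finite, by properness of the $E_0$-action), which is a priori an infinite family of open conditions on $P$; the work is to use the cocompactness of the $H$-action on $\Gamma_0$ to collapse this to a finite subfamily, so that $E^{\sharp}\cap P$ is open in $P$ and hence $E^{\sharp}$ is open. This is exactly where the compact generation of $H$ is indispensable: for a general $H$ the corresponding subgraph need not be ``locally detectable'', its stabilizer can fail to be open, and indeed no reduced envelope need exist.

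Granting that $E^{\sharp}$ is open, the remaining verification is soft. Then $\Gamma_0$ is a Cayley--Abels graph for the locally compact group $E^{\sharp}$: the $E^{\sharp}$-action on it is cocompact (it contains the cocompactly acting $H$) and has vertex stabilizers $E^{\sharp}\cap\Stab_{E_0}(v)$, which are open in the compact groups $\Stab_{E_0}(v)$, hence compact open. I would conclude with the elementary orbit-counting identity: if a group $Q$ acts transitively on a set $Y$ and a subgroup $N\le Q$ has finitely many orbits on $Y$ with representatives $y_1,\dots,y_k$, then $[Q:N]=\sum_{i=1}^{k}[\Stab_Q(y_i):\Stab_N(y_i)]$, so $[Q:N]<\infty$ as soon as each $\Stab_N(y_i)$ has finite index in $\Stab_Q(y_i)$. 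Now let $E_2\le G$ be open with $[H:H\cap E_2]<\infty$ and put $N:=E^{\sharp}\cap E_2$: this is open in $G$ (both $E^{\sharp}$ and $E_2$ are), it contains the finite-index subgroup $H\cap E_2$ of $H$, hence it has finitely many orbits on the $E^{\sharp}$-orbit $E^{\sharp}\cdot v_0\subseteq V(\Gamma_0)$, with vertex stabilizers of finite index in those of $E^{\sharp}$. The identity then gives $[E^{\sharp}:E^{\sharp}\cap E_2]<\infty$, so $E^{\sharp}$ is a reduced envelope of $H$ --- and, as a byproduct, any two reduced envelopes of $H$ are commensurable.
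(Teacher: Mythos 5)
The paper itself does not prove this statement---it is imported verbatim from Reid's work---so your argument has to stand on its own, and it does not: the step you yourself flag as the main obstacle, the openness of $E^{\sharp}=\Stab_{E_0}(\Gamma_0)$, is not merely left unproven, it is false in general, and cocompactness of the $H$-action on $\Gamma_0$ cannot rescue it. Concretely, let $\Delta$ be a thick locally finite tree, $G=E_0=\Aut(\Delta)$, and $H=\langle h\rangle$ with $h$ hyperbolic; then $H$ is closed and compactly generated, the Cayley--Abels graph can be taken to be the tree itself, and your $\Gamma_0$ is an $R_0$-neighbourhood of the axis of $h$. Its setwise stabilizer coincides with the setwise stabilizer of the axis, and that subgroup is not open: every open subgroup contains the pointwise fixator of some ball $\B(v_0,n)$, and such a fixator contains elements that fix the ball yet rotate the tree at a vertex of the axis just beyond radius $n$, throwing the axis (and its thickening) off itself. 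This is exactly the failure mode your sketch hopes to exclude: an element can satisfy any prescribed finite subfamily of your open conditions (by fixing a huge ball) while violating infinitely many of the remaining ones far away, so the finitely many $H$-orbits on $V(\Gamma_0)$ do not collapse the infinite family of conditions to a finite one.

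Moreover, the gap is not repairable within your strategy, because the reduced envelope is simply not a ``coarse stabilizer'' of the $H$-orbit. In the tree example, Theorem~\ref{th:openpropermain} of this paper shows every proper open subgroup of $\Aut(\Delta)$ is compact (proper residues are panels), hence contains no power of $h$; so the reducedness condition is vacuous for proper $E_2$ and the reduced envelope of $\langle h\rangle$ is all of $\Aut(\Delta)$, which is not commensurable with anything localized near the axis. A correct proof of Reid's theorem proceeds quite differently, working with the family of all open subgroups $O$ with $[H:H\cap O]<\infty$ (a family closed under finite intersections) and using compact generation of $H$ to extract a commensurability-minimal member, rather than stabilizing a geometric hull. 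The peripheral parts of your write-up are fine---the reduction to $H$ closed, the compact generation of $\overline{H}$, the existence of the compactly generated open envelope $E_0$, and the final orbit-counting step \emph{granted} openness of $E^{\sharp}$---but the core of the argument is missing and, as stated, wrong.
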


We will apply Reid's result to show the following.
\begin{prop}\label{pr:env}
Every open subgroup of $\Aut(\Delta)$ is commensurable with the reduced envelope of a cyclic subgroup.
\end{prop}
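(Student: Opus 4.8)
The plan is to combine Theorem~\ref{th:openpropermain} with Reid's existence result (Theorem~\ref{theomreidB}) and the Noetherian property established in Proposition~\ref{pr:noeth}. Let $H$ be an open subgroup of $\Aut(\Delta)$. The first step is to produce a suitable cyclic subgroup inside $H$. If $H$ is compact, then by Proposition~\ref{pr:openequivalent} it is a finite index subgroup of the stabilizer of a spherical residue, hence commensurable with a compact open subgroup; in that case a trivial cyclic subgroup will do, since the reduced envelope of the trivial group is any compact open subgroup, and these are all commensurable. So the interesting case is when $H$ is non-compact. Then, following the machinery of Section~\ref{ss:open} (in particular Definition~\ref{def:J}, Lemma~\ref{3.5} and Definition~\ref{def:I}), $H$ contains a finite index subgroup $H_1$ which in turn contains a hyperbolic element $h_1$; let $Z := \langle h_1 \rangle$ be the cyclic subgroup it generates.

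The key point is then to verify that $H$ is a reduced envelope of $Z$, or at least commensurable with one. First, $H$ is certainly an envelope of $Z$: it is open and contains $h_1$. To check reducedness, suppose $E_2$ is an open subgroup with $[Z : Z \cap E_2] < \infty$; I must show $[H : H \cap E_2] < \infty$. The idea, exactly as in \cite{CM13}, is that $E_2$ then contains a power $h_1^m$, so by Lemma~\ref{3.5} the parabolic closure $\Pc(\overline{h_1^m}) = \Pc(\overline{h_1})$, which has essential type $I = J$ by Lemma~\ref{le:several}. Applying the whole analysis of Section~\ref{ss:open} to the open subgroup $E_2$ (or rather to a finite-index non-compact open subgroup of it containing $h_1^m$), one finds that $E_2$ is contained with finite index in the stabilizer of a residue whose essential type is forced to be $J$ as well; since $H$ and $E_2$ thus both sit with finite index inside stabilizers of residues of the same essential type $J$, and these stabilizers are commensurable (they differ only by a spherical part in $J^\perp$, which contributes a finite factor by Lemma~\ref{fi2}), it follows that $[H : H \cap E_2] < \infty$. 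Hence $H$ itself is a reduced envelope of $Z$, and a fortiori commensurable with one.

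Alternatively, and perhaps more cleanly, I would invoke Theorem~\ref{theomreidB} directly: $Z = \langle h_1 \rangle$ is cyclic, hence compactly generated, so it admits a reduced envelope $E$ in $\Aut(\Delta)$. Since $H$ is an envelope of $Z$ and reduced envelopes are minimal in the commensurability ordering among envelopes of $Z$, we get $[E : E \cap H] < \infty$. For the reverse, one uses that $H$ is \emph{also} reduced as an envelope of $Z$ — which is the content of the previous paragraph — so any two reduced envelopes of $Z$ are commensurable, giving $[H : H \cap E] < \infty$ as well. Either way, $H$ is commensurable with the reduced envelope of the cyclic group $\langle h_1 \rangle$.

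The main obstacle is the middle step: showing that $H$ is itself a reduced envelope of $\langle h_1 \rangle$, i.e.\ that containing a power of $h_1$ in an open subgroup already pins down that subgroup up to commensurability. This requires re-running the type-determination argument of Section~\ref{ss:open} for an arbitrary open overgroup of a power of $h_1$ and checking that the essential type that comes out is again $J$; the delicate point is that the minimality of $J$ in Definition~\ref{def:J}\eqref{it:J} and the maximality of $I$ in Definition~\ref{def:I}\eqref{it:I} conspire, via Lemma~\ref{le:several}, to leave no freedom. Since this is precisely the argument carried out in \cite[Section 4]{CM13} for Kac--Moody groups, and every ingredient (hyperbolic elements via Lemma~\ref{3.5}, parabolic closures, root wing groups replacing root groups) has an exact counterpart in our setting, the proof can be copied \emph{mutatis mutandis}, and I would present it as such with the appropriate references.
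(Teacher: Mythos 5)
Your overall ingredients (the hyperbolic element $h_1$ from Definitions~\ref{def:J} and~\ref{def:I}, Reid's Theorem~\ref{theomreidB}, and the machinery of Section~\ref{ss:open}), and your separate treatment of the compact case, are in the right spirit, but the step on which everything hinges is flawed. To show that $H$ is reduced you argue: any open $E_2$ containing a power of $h_1$ is, up to finite index, contained in the stabilizer of a residue whose \emph{essential type} is $J$, and then you conclude because ``stabilizers of residues of the same essential type $J$ are commensurable (they differ only by a spherical part in $J^\perp$)''. That last assertion is false. Lemma~\ref{fi2} only compares $\Stab_{\Aut(\Delta)}(\Rd_J)$ with $\Stab_{\Aut(\Delta)}(\Rd_{J\cup J'})$, i.e.\ two residues through the \emph{same} chamber $c_0$; two distinct residues of the same non-spherical type $J$ generally have non-commensurable stabilizers (an element stabilizing both must stabilize the projection of one onto the other, which is a proper sub-residue, and the stabilizer of a proper sub-residue of a non-spherical residue has infinite index in the residue stabilizer). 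The analysis of Section~\ref{ss:open} applied to $E_2$ involves a conjugation (the element $g$ in Definition~\ref{def:J}\eqref{it:J}), so it only produces \emph{some} residue of that type; knowing the type is $J$ does not pin down the commensurability class of its stabilizer, and hence does not give $[H : H\cap E_2]<\infty$. Your deferral of the ``delicate point'' to an argument in \cite{CM13} does not repair this, since what is missing is precisely the identification of the residue (not only its type) attached to $E_2$.

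The paper sidesteps exactly this difficulty by a more economical comparison: it takes the reduced envelope $E$ of $\langle h_1\rangle$ provided by Theorem~\ref{theomreidB}, gets $[E : E\cap H_1]<\infty$ for free from the definition of reducedness (as in your third paragraph), and for the converse inequality it does \emph{not} try to prove that $H$ is itself reduced. Instead it considers $H_2 := E\cap\Stab_{\Aut(\Delta)}(\Rd_J)$, which by construction is open, contains $\langle h_1\rangle$ (whose image has parabolic closure $W_J$), and already stabilizes the \emph{correct} residue $\Rd_J$; applying Lemma~\ref{fi1} to this subgroup yields that $H_2$ has finite index in $\Stab_{\Aut(\Delta)}(\Rd_J)$, whence $[H_1 : H_1\cap E]<\infty$ and the commensurability of $H$ with $E$. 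If you want to salvage your route, you must replace the ``same type implies commensurable stabilizers'' step by an argument of this kind, i.e.\ first intersect $E_2$ with $\Stab_{\Aut(\Delta)}(\Rd_J)$ (or otherwise show that the residue produced for $E_2$ has stabilizer commensurable with $\Stab_{\Aut(\Delta)}(\Rd_J)$, using that $h_1^m\in E_2$ and $\Pc(\overline{h_1})=W_J$), and only then invoke Lemma~\ref{fi1}.
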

\begin{proof}
Let $H$ be an open subgroup of $\Aut(\Delta)$ and assume without loss of generality that
$J \subseteq S$ and $H_1 = H \cap \Stab_{\Aut(\Delta)}(\Rd_{J})$ are as in Definition~\ref{def:J}\eqref{it:J}.
Let $h_1$ be the hyperbolic element of $H_1$ as in Definition~\ref{def:I}, so that $\Pc(\overline{h_1}) = W_J$.

By Theorem~\ref{theomreidB}, the group $\langle h_1 \rangle$ has a reduced envelope $E$ in $\Aut(\Delta)$.
In particular, $[E : E \cap H_1]$ is finite.

On the other hand, $H_2 := E \cap \Stab_{\Aut(\Delta)}(\Rd_{J})$ is an open subgroup of $G$ containing $\langle h_1 \rangle$,
hence Lemma~\ref{fi1} applied on $H_2$ shows that $H_2$ is a finite index subgroup of $\Stab_{\Aut(\Delta)}(\Rd_{J})$ for the same subset $J \subseteq S$, i.e.,
\[[ \Stab_{\Aut(\Delta)}(\Rd_{J}) : \Stab_{\Aut(\Delta)}(\Rd_{J}) \cap E]< \infty.\]
Since also $H_1$ has finite index in $\Stab_{\Aut(\Delta)}(\Rd_{J})$ by Lemma~\ref{fi1} again, it follows that also
$[H_1 : H_1 \cap E]$ is finite.
We conclude that $H_1$, and hence also $H$, is commensurable with $E$, which is the reduced envelope of a cyclic subgroup.
\end{proof}

\small
\bibliography{Primitive}
\bibliographystyle{alpha}
\addcontentsline{toc}{section}{References}

\end{document}